\titleformat{\subsection}{\it}{\thesubsection.\enspace}{1pt}{}
\newtheorem{theo}{Theorem}[section]
\newtheorem{lemm}[theo]{Lemma}
\newtheorem{coro}[theo]{Corollary}
\newtheorem{prop}[theo]{Proposition}
\newtheorem{rema}[theo]{Remark}
\numberwithin{equation}{section}
\newcommand\lm{{\lesssim}}
\begin{document}
\title{Global strong solutions and optimal $L^2$ decay to the compressible FENE dumbbell model
\hspace{-4mm}
}

\author{ Zhaonan $\mbox{Luo}^1$ \footnote{email: 1411919168@qq.com},\quad
Wei $\mbox{Luo}^1$\footnote{E-mail:  luowei23@mail2.sysu.edu.cn} \quad and\quad
 Zhaoyang $\mbox{Yin}^{1,2}$\footnote{E-mail: mcsyzy@mail.sysu.edu.cn}\\
 $^1\mbox{Department}$ of Mathematics,
Sun Yat-sen University, Guangzhou 510275, China\\
$^2\mbox{Faculty}$ of Information Technology,\\ Macau University of Science and Technology, Macau, China}

\date{}
\maketitle
\hrule

\begin{abstract}
In this paper, we are concerned with the global well-posedness and $L^2$ decay rate for the strong solutions of the compressible finite extensible nonlinear elastic (FENE) dumbbell model. For $d\geq 2$, we prove that the compressible FENE dumbbell model admits a unique global strong solution provided the initial data are close to equilibrium state. Moreover, by the Littlewood-Paley decomposition theory and the Fourier splitting method, we show optimal $L^2$ decay rate of global strong solutions for $d\geq 3$.  \\
\vspace*{5pt}
\noindent {\it 2010 Mathematics Subject Classification}: 35Q30, 76B03, 76D05, 76D99.

\vspace*{5pt}
\noindent{\it Keywords}: The compressible FENE dumbbell model; Global strong solutions; Time decay rate.
\end{abstract}

\vspace*{10pt}

\tableofcontents

\section{Introduction}
In this paper we study the compressible finite extensible nonlinear elastic (FENE) dumbbell model \cite{Bird1977,Doi1988}:
\begin{align}\label{eq0}
\left\{
\begin{array}{ll}
\varrho_t+div(\varrho u)=0 , \\[1ex]
(\varrho u)_t+div(\varrho u\otimes u)-div\Sigma{(u)}+\nabla_x{P(\varrho)}=div~\tau, \\[1ex]
\psi_t+u\cdot\nabla\psi=div_{R}[- \sigma(u)\cdot{R}\psi+\nabla_{R}\psi+\nabla_{R}\mathcal{U}\psi],  \\[1ex]
\tau_{ij}=\int_{B}(R_{i}\nabla_{j}\mathcal{U})\psi dR, \\[1ex]
\varrho|_{t=0}=\varrho_0,~~u|_{t=0}=u_0,~~\psi|_{t=0}=\psi_0, \\[1ex]
(\nabla_{R}\psi+\nabla_{R}\mathcal{U}\psi)\cdot{n}=0 ~~~~ \text{on} ~~~~ \partial B(0,R_{0}) .\\[1ex]
\end{array}
\right.
\end{align}
In dumbbell model \eqref{eq0}, $\varrho(t,x)$ denotes the density of the solvent, $u(t,x)$ represents the velocity of the polymeric liquid and $\psi(t,x,R)$ is the distribution function for the internal configuration. A polymer is described as an "elastic dumbbell" consisting of two "beads" joined by a spring which can be modeled by the polymer elongation $R$. The finite extensibility of the polymers means that $R$ satisfies $R\in B=B(0,R_{0})$. Let $x\in\mathbb{R}^d$ and $\psi(t,x,R)$ satisfy $\int_{B} \psi(t,x,R)dR =1$. The stress tensor $\Sigma{(u)}=\mu(\nabla u+\nabla^{T} u)+\mu'div~u\cdot Id$ satisfies $\mu>0$ and $2\mu+\mu'>0$.
The pressure obeys the so-called $\gamma$-law: $P(\varrho)=\varrho^\gamma$ with $\gamma\geq1$. $\tau$ is an additional stress tensor. Moreover the potential $\mathcal{U}(R)=-k\log(1-(\frac{|R|}{|R_{0}|})^{2})$ for some constant $k>0$. $\sigma(u)=\nabla u$ is the drag term.
This is a micro-macro model (For more details, one can refer to \cite{2021Global}, \cite{Masmoudi2008} and \cite{Masmoudi2013}).

%
%
%


Without loss of generality, we will take $R_{0}=1$ in this paper.
 One can easily to check that the system \eqref{eq0} possesses a trivial solution  $\varrho=1$, $u=0$ and $$\psi_{\infty}(R)=\frac{e^{-\mathcal{U}(R)}}{\int_{B}e^{-\mathcal{U}(R)}dR}=\frac{(1-|R|^2)^k}{\int_{B}(1-|R|^2)^kdR}.$$
By taking the perturbations near the global equilibrium:
\begin{align*}
\rho=\varrho-1,~~u=u,~~g=\frac {\psi-\psi_\infty} {\psi_\infty},
\end{align*}
 we rewrite \eqref{eq0} as the following system:
\begin{align}\label{eq1}
\left\{
\begin{array}{ll}
\rho_t+div~u(1+\rho)=-u\cdot\nabla\rho , \\[1ex]
u_t-\frac 1 {1+\rho} div\Sigma{(u)}+\frac {P'(1+\rho)} {1+\rho} \nabla\rho=-u\cdot\nabla u+\frac 1 {1+\rho} div~\tau, \\[1ex]
g_t+\mathcal{L}g=-u\cdot\nabla g-\frac 1 {\psi_\infty}\nabla_R\cdot(\nabla uRg\psi_\infty)-div~u-\nabla u R\nabla_{R}\mathcal{U},  \\[1ex]
\tau_{ij}(g)=\int_{B}(R_{i}\nabla_{Rj}\mathcal{U})g\psi_\infty dR, \\[1ex]
\rho|_{t=0}=\rho_0,~~u|_{t=0}=u_0,~~g|_{t=0}=g_0, \\[1ex]
\psi_\infty\nabla_{R}g\cdot{n}=0 ~~~~ \text{on} ~~~~ \partial B(0,1) ,\\[1ex]
\end{array}
\right.
\end{align}
where $\mathcal{L}g=-\frac 1 {\psi_\infty}\nabla_R\cdot(\psi_\infty\nabla_{R}g)$.

{\bf Remark.} As in the reference \cite{Masmoudi2013}, one can deduce that $\psi=0$ on $\partial B(0,1)$.

M. Renardy \cite{Renardy} established the local well-posedness in Sobolev spaces with potential $\mathcal{U}(R)=(1-|R|^2)^{1-\sigma}$ for $\sigma>1$. Later, B. Jourdain, T. Leli\`{e}vre, and
C. Le Bris \cite{Jourdain} proved local existence of a stochastic differential equation with potential $\mathcal{U}(R)=-k\log(1-|R|^{2})$ in the case $k>3$ for a Couette flow. H. Zhang and P. Zhang \cite{Zhang-H} proved local well-posedness of (1.4) with $d=3$ in weighted Sobolev spaces. For the co-rotation case, F. Lin, P. Zhang, and Z. Zhang \cite{F.Lin} obtained a global existence results with $d=2$ and $k > 6$. If the initial data is perturbation around equilibrium, N. Masmoudi \cite{Masmoudi2008} proved global well-posedness of (1.4) for $k>0$. In the co-rotation case with $d=2$, he \cite{Masmoudi2008} obtained a global result for $k>0$ without any small conditions. In the co-rotation case, A. V. Busuioc, I. S. Ciuperca, D. Iftimie and L. I. Palade \cite{Busuioc} obtained a global existence result with only the small condition on $\psi_0$. The global existence of weak solutions in $L^2$ was proved recently by N. Masmoudi \cite{Masmoudi2013} under some entropy conditions.
Recently, M. Schonbek \cite{Schonbek} studied the $L^2$ decay of the velocity for the co-rotation
FENE dumbbell model, and obtained the
decay rate $(1+t)^{-\frac{d}{4}+\frac{1}{2}}$, $d\geq 2$ with $u_0\in L^1$.
Moreover, she conjectured that the sharp decay rate should be $(1+t)^{-\frac{d}{4}}$,~$d\geq 2$.
However, she failed to get it because she could not use the bootstrap argument as in \cite{Schonbek1985} due to the
additional stress tensor.  Recently, W. Luo and Z. Yin \cite{Luo-Yin} improved Schonbek's result
and showed that the decay rate is $(1+t)^{-\frac{d}{4}}$ with $d\geq 3$ and $\ln^{-l}(1+t)$ with $d=2$ for any $l\in\mathbb{N^+}$.
This result shows that M. Schonbek's conjecture is true when $d \geq3$. More recently, W. Luo and Z. Yin \cite{Luo-Yin2} improved the decay rate to $(1+t)^{-\frac{d}{4}}$ with $d=2$.

\subsection{Short reviews for the compressible Navier-Stokes (CNS) equations}
The system \eqref{eq0} reduce to the CNS equations by taking $\psi\equiv 0$. In order to study about the \eqref{eq0}, we cite some reference about the CNS equations. The first local existence and uniqueness results were obtained by J. Nash \cite{miaocompressns23} for smooth initial data without vacuum. Later on, A. Matsumura and T. Nishida \cite{Matsumura} proved the global well-posedness and the time decay rate for smooth data close to equilibrium for $d=3$. In \cite{miaocompressns18}, A. V. Kazhikhov and V. V. Shelukhin established the first global existence result with large data in one dimensional space under some suitable condition on $\mu$ and $\lambda$. If $\mu$ is constant and $\lambda(\rho)=b\rho^\beta$, X. Huang and J. Li\cite{Huang} obtained a global existence and uniqueness result for large initial data in two dimensional space(See also \cite{miaocompressns26}). In \cite{miaocompressns25}, X. Huang, J. Li, and Z. Xin proved the global well-posedness with vacuum. The blow-up phenomenons were studied by Z. Xin et al
in \cite{Xin98,miaocompressns28,miaocompressns27}. Concerning the global existence of weak solutions for the large initial data, we may refer to \cite{miaocompressns2,miaocompressns3,miaocompressns21,Vasseur}.

To catch the scaling invariance property of the CNS equations. R. Danchin introduced the "critical spaces" in his series papers \cite{miaocompressns11,miaocompressns12,miaocompressns14,miaocompressns15,miaocompressns155} and obtained several important existence and uniqueness results. Recently, Q. Chen, C. Miao and Z. Zhang \cite{miao} proved the local existence and uniqueness in critical homogeneous Besov spaces. The ill-posedness result was obtained in \cite{miaoill-posedness}. In \cite{miaocompressns24}, L. He, J. Huang and C. Wang proved the global stability with $d=3$ i.e. for any perturbed solutions will remain close to the reference solutions if initially they are close to another one.

The large time behaviour was proved by H. Li and T. Zhang in \cite{Li2011Large}. They obtain the optimal time decay rate for the CNS equations by spectrum analysis in Sobolev spaces. Recently, J. Xu\cite{Xu2019} studied about the large time behaviour in the critical Besov space and obtain the optimal time decay rate.

\subsection{Main results}

J. Ning, Y. Liu and T. Zhang \cite{2017Global} proved the first global well-posedness for \eqref{eq0} if the initial data is close to the equilibrium. In \cite{2017Global}, the authors assume that $R\in\mathbb{R}^3$ which means that polymer elongation can be infinite. Actually, the polymer elongation $R$ is usually bounded.

Recently, N. Masmoudi \cite{2016Equations} is concerning with the long time behavior for polymeric models. The co-rotation compressible FENE system has been studied in \cite{2021Global}. In the co-rotation case, the drag term $\sigma(u)=\frac{\nabla u-\nabla u^T}{2}$ which leads to some good structure such that the $\psi-\psi_\infty$ is exponential decay in time. To our best knowledge, the same problem for the general case has not been studied yet. This problem is interesting and more difficult than the co-rotation case. In this paper, we firstly study the global well-posedness results for \eqref{eq0}. The key point is to prove a global priori estimate for \eqref{eq1} with small data. Using the energy methods and the cancellation relation between the CNS equations and Fokker-Planck equation, for $d\geq2$, we obtain a global priori estimate.
Moreover, if $d\geq3$, we study about the large time behaviour and obtain the optimal time decay rate for $(\rho,u)$ in $L^2$. The proof is based on the Fourier splitting method and the Littlewood-Paley decomposition theory. The first difficult is to estimate the additional linear term $div~\tau$. Motivated by \cite{He2009} and \cite{2018Global}, we can cancel the stress term $\tau$ in Fourier space. Then we obtain the time decay rate $(1+t)^{-\frac{d}{8}}$ for the velocity in $L^2$ by the Fourier splitting method and the bootstrap argument. The main difficult to get optimal time decay rate is that we can not get any information of $u$ in $L^1$ from \eqref{eq1}. Fortunately, similar to \cite{Tong2017The}, we can prove a slightly weaker conclusion $\|u\|_{L^\infty(0,\infty; \dot{B}^{-\frac d 2}_{2,\infty})}\leq C$ from \eqref{eq1} by using the time decay rate $(1+t)^{-\frac{d}{8}}$. Finally, we obtain optimal time decay rate for the velocity in $L^2$ by the Littlewood-Paley decomposition theory and the standard Fourier splitting method.

Our main result can be stated as follows.

Using the energy methods in \cite{2021Global}, one can deduce that the global existence of strong solutions for \eqref{eq1}. However, to obtain optimal time decay rate, we need a more precise higher order derivatives estimate for \eqref{eq1}. We establish the precise higher order derivatives estimate in the proof of the following Theorem.
\begin{theo}[Global well-posedness]\label{th1}
Let $d\geq 2~and~s>1+\frac d 2$. Assume that $(\rho_0,u_0,g_0)\in H^s\times H^s\times H^s(\mathcal{L}^2)$, then there exists a sufficiently small constant $\epsilon_0$ such that if
$\int_B g_{0}\psi_{\infty}dR=0$ and $1+g_0>0$ and
\begin{align}
E(0)=\|\rho_0\|^2_{H^s}+\|u_0\|^2_{H^s}+\|g_0\|^2_{H^s(\mathcal{L}^2)}\leq \epsilon_0,
\end{align}
then the compressible FENE system \eqref{eq1} admits a unique global strong solution $(\rho,u,g)$ satisfying $\int_B g\psi_{\infty}dR=0$ and $1+g>0$ and
\begin{align}
\sup_{t\in[0,+\infty)} E(t)+\int_{0}^{\infty}D(t)dt\leq C_0 E(0),
\end{align}
where $C_0>1$ is a constant.
\end{theo}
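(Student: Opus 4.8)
\medskip
\noindent\textbf{Proof plan.} The plan is the classical small-data scheme: construct a local strong solution, prove a global a priori bound, and continue. A local solution on an interval $[0,T]$ whose length depends only on $E(0)$ is built by a linearised iteration as in \cite{2021Global}: the $u$-equation is parabolic, the $g$-equation is parabolic in the $R$ variable (via $\mathcal L$), and the $\rho$-equation is a transport equation, so one obtains $\rho\in C([0,T];H^s)$, $u\in C([0,T];H^s)\cap L^2(0,T;H^{s+1})$, $g\in C([0,T];H^s(\mathcal L^2))$ with $\nabla_R g\in L^2(0,T;H^s(\mathcal L^2))$; uniqueness on $[0,T]$ follows from an $L^2$-type energy estimate for the difference of two solutions (whose coefficients lie in $H^s$) closed by Gronwall. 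The two constraints are most cleanly propagated through $\psi=\psi_\infty(1+g)$, which solves the Fokker--Planck equation of \eqref{eq0}: integrating that equation in $R$ over $B$ and using that $\psi$ vanishes on $\partial B$ (so the probability flux has vanishing normal component there), $\int_B\psi\,dR$ is merely transported by $u$, hence stays $\equiv1$, i.e.\ $\int_B g\psi_\infty\,dR\equiv0$; and the comparison principle for that same (degenerate parabolic in $R$, transported in $x$) equation keeps $\psi>0$, i.e.\ $1+g>0$.

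\medskip
\noindent\textbf{The global a priori estimate.} Everything reduces to proving $\frac{d}{dt}\widetilde E(t)+c\,D(t)\le C\sqrt{E(t)}\,D(t)$ for a functional $\widetilde E\sim E$ and a dissipation $D(t)\sim\|\nabla\rho\|_{H^{s-1}}^2+\|\nabla u\|_{H^s}^2+\|g\|_{H^s(\mathcal L^2)}^2$, the last term comparable to $\|\nabla_R g\|_{H^s(\mathcal L^2)}^2$ by the weighted Poincar\'e inequality (applicable since $\int_B\partial_x^\alpha g\,\psi_\infty\,dR=0$). At zeroth order I would use the \emph{physical} energy identity for \eqref{eq0} (test the momentum equation with $u$ and combine with the continuity equation), which yields an \emph{exact} identity $\frac{d}{dt}\mathcal E+\mathcal D_{\mathrm{visc}}=\int_{\mathbb{R}^d}\mathrm{div}\,\tau\cdot u\,dx$, where $\mathcal E=\int_{\mathbb{R}^d}\big(\tfrac12(1+\rho)|u|^2+\Pi(1+\rho)\big)dx$ with $\Pi$ the convex pressure potential ($\Pi''=P'/\varrho$, $\Pi(1)=\Pi'(1)=0$, so $\mathcal E\sim\tfrac12\|u\|_{L^2}^2+\tfrac12\|\rho\|_{L^2}^2$ up to cubic corrections included in $\widetilde E$), and $\mathcal D_{\mathrm{visc}}\gtrsim\|\nabla u\|_{L^2}^2$ (using $\mu>0$, $2\mu+\mu'>0$); crucially there is \emph{no} cubic density error here. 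Testing the $g$-equation with $g\psi_\infty$ produces $\frac{d}{dt}\tfrac12\|g\|_{\mathcal L^2}^2+\|\nabla_R g\|_{\mathcal L^2}^2$, the term $\int_{\mathbb{R}^d}\mathrm{div}\,u\,\big(\int_B g\psi_\infty\,dR\big)dx=0$, a cubic term, and---the crucial structural point---exactly $-\int_{\mathbb{R}^d}\mathrm{div}\,\tau\cdot u\,dx$ (integrate $\int\mathrm{div}\,\tau\cdot u$ by parts in $x$ and compare with $\tau_{ij}(g)=\int_B R_i\nabla_{Rj}\mathcal U\,g\,\psi_\infty\,dR$: the drag term of the $g$-equation delivers precisely the opposite contribution). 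Thus the polymeric stress cancels in the combined $L^2$ estimate---the cancellation between the Navier--Stokes and Fokker--Planck parts announced in the introduction. At order $1\le|\alpha|\le s$ one applies $\partial_x^\alpha$ and repeats, symmetrising the weights (multiply the differentiated $\rho$- and $u$-equations by $\tfrac{P'(1+\rho)}{1+\rho}\partial^\alpha\rho$ and $(1+\rho)\partial^\alpha u$); since $R$ and $\psi_\infty$ are $x$-independent the $\mathrm{div}\,\tau$/Fokker--Planck cancellation survives \emph{verbatim} at every order, while each remaining commutator term is a product of three perturbation factors at least two of which carry a spatial derivative, hence is $\lesssim\sqrt E\,D$ by the Moser product/composition estimates and $H^{s-1}\hookrightarrow L^\infty$ ($s>1+\tfrac d2$). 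The terms carrying the singular FENE weight $\nabla_R\mathcal U\sim(1-|R|^2)^{-1}$---namely $\int_{\mathbb{R}^d}\int_B\nabla u\,R\,\nabla_R\mathcal U\,g\,\psi_\infty$ and $\int_{\mathbb{R}^d}\int_B\psi_\infty^{-1}\nabla_R\!\cdot(\nabla u\,R\,g\,\psi_\infty)\,\partial_x^\alpha g\,\psi_\infty$---are absorbed by the weighted Hardy/Poincar\'e inequalities for that potential.

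\medskip
\noindent\textbf{Closing and conclusion.} These steps yield no dissipation for $\rho$ itself, so I would add $\delta\sum_{|\alpha|\le s-1}\frac{d}{dt}\int_{\mathbb{R}^d}\partial^\alpha u\cdot\partial^\alpha\nabla\rho\,dx$ with $\delta>0$ small: using the momentum and continuity equations this produces $+\,c_1\delta\|\nabla\rho\|_{H^{s-1}}^2$ ($c_1=P'(1)>0$), at the cost of $\delta\|\nabla u\|_{H^{s-1}}^2$ (absorbed by the viscous dissipation), a term $\lesssim\delta\,\|\tau\|\,\|\nabla\rho\|$ (absorbed by Young's inequality and Poincar\'e in $R$), and cubic terms $\lesssim\sqrt E\,D$. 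For $\delta$ small, $\widetilde E(t):=E(t)+\delta\sum_{|\alpha|\le s-1}\int_{\mathbb{R}^d}\partial^\alpha u\cdot\partial^\alpha\nabla\rho\,dx$ satisfies $\tfrac12E(t)\le\widetilde E(t)\le\tfrac32E(t)$ and the advertised differential inequality; hence if $E(0)\le\epsilon_0$ with $\epsilon_0$ so small that $C\sqrt{3\epsilon_0}<c$, a continuity (bootstrap) argument turns the local bound into $\sup_{[0,T]}E(t)+\int_0^T D(t)\,dt\le C_0 E(0)$ with $C_0>1$ independent of $T$; combined with the local theory and uniqueness, this gives the global strong solution. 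I expect the main obstacle to be the higher-order step: keeping the $\mathrm{div}\,\tau$/Fokker--Planck cancellation intact through the top-order estimate while tracking precisely how the derivatives distribute among the three factors in each nonlinear term---this is exactly the ``more precise higher order derivatives estimate'' needed afterwards for the optimal $L^2$ decay. In contrast with the co-rotation model of \cite{2021Global}, here $g$ has no exponential-in-time decay, so the Fokker--Planck dissipation $\|\nabla_R g\|_{H^s(\mathcal L^2)}^2$ together with the weighted Poincar\'e inequality must carry the entire burden of controlling the polymeric stress.
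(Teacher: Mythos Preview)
Your proposal is correct and follows essentially the same route as the paper: local existence by iteration, propagation of the two constraints, a symmetrised energy estimate (multipliers $h(\rho)=P'(1+\rho)/(1+\rho)$ and $(1+\rho)$) in which the $\mathrm{div}\,\tau$ term cancels against the drag term in the $g$-equation at every order, a cross term $\eta\sum\int\partial^\alpha u\cdot\nabla\partial^\alpha\rho$ to recover $\|\nabla\rho\|_{H^{s-1}}^2$, Moser-type commutator bounds to close everything as $\sqrt{E}\,D$, and a continuation argument. The only cosmetic difference is that at zeroth order you invoke the convex pressure potential $\Pi$ to get an exact identity, whereas the paper multiplies $(\ref{eq1})_1$ by $h(\rho)\rho$ and carries the resulting cubic terms by hand; the two energies are equivalent and the subsequent steps coincide.
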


\begin{theo}[Large time behaviour]\label{th2}
Let $d\geq 3$. Assume that $(\rho_0,u_0,g_0)$ satisfy the condition in Theorem \ref{th1}, in addition, if $(\rho_0,u_0)\in \dot{B}^{-\frac d 2}_{2,\infty}\times \dot{B}^{-\frac d 2}_{2,\infty}$ and $g_0\in \dot{B}^{-\frac d 2}_{2,\infty}(\mathcal{L}^2)$, then the corresponding solution $(\rho,u,g)$ satisfy
\begin{align}\label{decay}
\|\rho\|_{L^2}+\|u\|_{L^2}\leq C(1+t)^{-\frac d 4}
\end{align}
and
\begin{align}
\|g\|_{L^2(\mathcal{L}^2)}\leq C(1+t)^{-\frac{d}{4}-\frac{1}{2}}.
\end{align}
\end{theo}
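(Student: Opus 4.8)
The plan is to combine a Fourier-space energy estimate carrying a cancellation for the stress coupling, the Fourier splitting method together with a bootstrap, and a propagation estimate for the low-frequency Besov norm $\dot{B}^{-\frac d2}_{2,\infty}$; everything is carried out on top of the global solution and the precise higher-order estimates furnished by Theorem \ref{th1}. \emph{Step 1 (linear cancellation in Fourier space).} Rewriting \eqref{eq1} as a perturbation of its linearization at $(0,0,0)$ and applying the Fourier transform in $x$, the obstruction is that $\mathrm{div}\,\tau(g)$ enters the $u$-equation at first order while $-\mathrm{div}\,u-\nabla u\,R\,\nabla_R\mathcal{U}$ forces the $g$-equation, so the naive energy does not close. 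In the spirit of \cite{He2009,2018Global}, I would use the explicit form $\tau_{ij}(g)=\int_B R_i\nabla_{Rj}\mathcal{U}\, g\,\psi_\infty\,dR$ together with the identity $\nabla_R\psi_\infty=-\psi_\infty\nabla_R\mathcal{U}$ to pair the $g$-equation against $\nabla_R\mathcal{U}$ so that the $\tau$-coupling cancels with the $\nabla u\,R\,\nabla_R\mathcal{U}$ term; combining this with the usual compressible Navier--Stokes Lyapunov functional in $(\hat\rho,\hat u)$ (including the cross term $\mathrm{Re}(i\xi\hat\rho\cdot\overline{\hat u})$ that recovers the low-frequency dissipation of $\rho$) and with the self-adjointness, nonnegativity and spectral gap $\langle\mathcal{L}g,g\rangle_{\mathcal{L}^2}\gtrsim\|g\|^2_{\mathcal{L}^2}$ of $\mathcal{L}$ on $\{\int_B g\psi_\infty\,dR=0\}$, one is led to a pointwise-in-$\xi$ inequality of the form
\[
\frac{d}{dt}\mathcal{F}(\xi,t)+\frac{c|\xi|^2}{1+|\xi|^2}\,\mathcal{F}(\xi,t)\lesssim |\widehat{N}(\xi,t)|\,\mathcal{F}(\xi,t)^{1/2},
\]
where $\mathcal{F}(\xi,t)\simeq|\hat\rho|^2+|\hat u|^2+\|\hat g\|^2_{\mathcal{L}^2}$ and $N$ collects the quadratic terms of \eqref{eq1}.

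\emph{Step 2 (first decay rate).} With the precise higher-order estimates from the proof of Theorem \ref{th1}, I would run the Fourier splitting method: with $S(t)=\{\xi:|\xi|^2\le C_1/(1+t)\}$, integrating the inequality of Step 1 in $\xi$ yields
\[
\frac{d}{dt}\mathcal{E}(t)+\frac{C_1}{1+t}\,\mathcal{E}(t)\lesssim\frac{1}{1+t}\int_{S(t)}\mathcal{F}(\xi,t)\,d\xi+(\text{higher-order remainders}),
\]
with $\mathcal{E}\simeq E$. Using the hypothesis $(\rho_0,u_0,g_0)\in\dot{B}^{-\frac d2}_{2,\infty}(\mathcal{L}^2)$ to control the linear part of the low-frequency integral by $(1+t)^{-\frac d2}$, a crude bound for the nonlinear part, and then iterating the resulting differential inequalities (the nonlinear contributions being time-integrable since $d\ge3$), I expect to reach the intermediate rate $\|(\rho,u,g)(t)\|_{L^2}\lesssim(1+t)^{-\frac d8}$, with one order of spatial derivatives decaying accordingly.

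\emph{Step 3 (propagation of $\dot{B}^{-\frac d2}_{2,\infty}$ and the optimal rate).} Because \eqref{eq1} gives no $L^1$ information on $u$, the optimal bootstrap cannot be closed directly; instead, following \cite{Tong2017The}, I would first prove $\|(\rho,u,g)\|_{L^\infty(0,\infty;\dot{B}^{-\frac d2}_{2,\infty}(\mathcal{L}^2))}\le C$. Applying $\dot\Delta_j$ to \eqref{eq1}, using the cancellation structure of Step 1 on each dyadic shell, and the bound $\|fh\|_{\dot{B}^{-\frac d2}_{2,\infty}}\lesssim\|fh\|_{L^1}\le\|f\|_{L^2}\|h\|_{L^2}$ for the quadratic terms, the sources are controlled by products such as $\|u\|_{L^2}\|\nabla u\|_{L^2}$, $\|\rho\|_{L^2}\|\nabla\rho\|_{L^2}$ and $\|u\|_{L^2}\|\nabla g\|_{L^2(\mathcal{L}^2)}$, which by Step 2 are $O((1+t)^{-1-\delta})$ for $d\ge3$ and hence integrable in time; this gives the uniform bound. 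I would then re-run the Fourier splitting: since $\int_{S(t)}(|\hat\rho|^2+|\hat u|^2)\,d\xi\lesssim\sum_{2^j\lesssim(1+t)^{-1/2}}2^{jd}\|(\rho,u)\|^2_{\dot{B}^{-\frac d2}_{2,\infty}}\lesssim(1+t)^{-\frac d2}$, Gronwall applied to $\frac{d}{dt}\|(\rho,u)\|^2_{L^2}+\frac{C_1}{1+t}\|(\rho,u)\|^2_{L^2}\lesssim(1+t)^{-\frac d2-1}$ gives $\|(\rho,u)\|_{L^2}\lesssim(1+t)^{-\frac d4}$. Finally, for $g$ the microscopic dissipation $\langle\mathcal{L}g,g\rangle_{\mathcal{L}^2}$ has a spectral gap, so combining the Poincar\'e inequality with the already-obtained decay of the macroscopic sources $(\rho,u,\mathrm{div}\,u)$ in the $g$-equation, one further Fourier-splitting and Gronwall step produces the extra half power $\|g\|_{L^2(\mathcal{L}^2)}\lesssim(1+t)^{-\frac d4-\frac12}$.

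\emph{Main obstacle.} I expect Step 3 to be the crux. The absence of any $L^1$-type control on $u$ is exactly what blocks the classical Schonbek bootstrap, and replacing it by the weaker uniform $\dot{B}^{-\frac d2}_{2,\infty}$ bound forces one to track carefully which quadratic terms decay fast enough to be time-integrable; this is where the restriction $d\ge3$ enters and where the interplay between the macroscopic decay and the extra microscopic dissipation of $\mathcal{L}$ has to be handled. The stress coupling of Step 1 is the other conceptual point, but once the Fourier-space cancellation in the spirit of \cite{He2009,2018Global} is set up, the remaining analysis is essentially that of the compressible Navier--Stokes system.
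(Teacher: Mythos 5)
Your proposal follows essentially the same route as the paper: the Fourier-space cancellation of the stress coupling, Fourier splitting plus bootstrap to reach $E(t)\lesssim(1+t)^{-\frac d4}$ (i.e.\ the intermediate rate $(1+t)^{-\frac d8}$ for $\|(\rho,u)\|_{L^2}$), propagation of a uniform $\dot{B}^{-\frac d2}_{2,\infty}$ bound in place of $L^1$ control, a second Fourier splitting for the optimal rate, and the spectral gap of $\mathcal{L}$ for the extra half power on $g$. The one step you only gesture at --- that first spatial derivatives decay a factor $(1+t)^{-\frac 12}$ faster (the paper's separate higher-order energy estimate $E^1_\eta\lesssim(1+t)^{-\frac d4-1}$) --- is genuinely load-bearing for $d=3$, since without it quadratic sources such as $\|u\|_{L^2}\|\nabla\rho\|_{L^2}$ and $\|u\|_{L^2}\|\nabla g\|_{L^2(\mathcal{L}^2)}$ are not time-integrable, so the Besov propagation would not close and the final Gronwall step for $g$ would lose the half power.
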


\begin{rema}
Taking $\psi \equiv 0$ and combining with the result in \cite{Li2011Large}, we can see that the $L^2$ decay rate for $(\rho,u)$ obtained in Theorem \ref{th2} is optimal.
\end{rema}

\begin{rema}
In previous papers, researchers usually add the condition $(\rho_0,u_0)\in L^1\times  L^1$ to obtain the optimal time decay rate. Since $L^1\hookrightarrow \dot{B}^{-\frac d 2}_{2,\infty}$, it follows that our condition is weaker and the results still hold true for $(\rho_0, u_0)\in L^1\times  L^1$.  Moreover, the assumption can be replaced with a weaker assumption $\sup_{j\leq j_0}2^{-\frac d 2 j}\|\dot{\Delta}_j (\rho_0,u_0,g_0)\|_{L^2\times L^2\times L^2(\mathcal{L}^2)} <\infty$, for any $j_0\in \mathbb{Z}$.
\end{rema}

The paper is organized as follows. In Section 2 we introduce some notations and preliminaries which will be used in the sequel. In Section 3 we prove the global well-posedness of the compressible FENE dumbbell model for $d\geq2$. In Section 4 we study the optimal $L^2$ decay of solutions to the compressible FENE model by using the Fourier splitting method, the bootstrap argument and the Littlewood-Paley decomposition theory $d\geq3$.

\section{Preliminaries}
For the convenience of readers, we give some notations and useful lemmas in this section .

Let $p\geq1$. We denote by $\mathcal{L}^{p}$ the space
$$\mathcal{L}^{p}=\big\{f \big|\|f\|^{p}_{\mathcal{L}^{p}}=\int_{B} \psi_{\infty}|f|^{p}dR<\infty\big\},$$
and denote by $L^{p}_{x}(\mathcal{L}^{q})$ the space
$$L^{p}_{x}(\mathcal{L}^{q})=\big\{f \big|\|f\|_{L^{p}_{x}(\mathcal{L}^{q})}=(\int_{\mathbb{R}^{d}}(\int_{B} \psi_{\infty}|f|^{q}dR)^{\frac{p}{q}}dx)^{\frac{1}{p}}<\infty\big\}.$$

The symbol $\widehat{f}=\mathcal{F}(f)$ stands for the Fourier transform of $f$.
Let $\Lambda^s f=\mathcal{F}^{-1}(|\xi|^s \widehat{f})$.
If $s\geq0$, we denote by $H^{s}(\mathcal{L}^{2})$ the space
$$H^{s}(\mathcal{L}^{2})=\{f\big| \|f\|^2_{H^{s}(\mathcal{L}^{2})}=\int_{\mathbb{R}^{d}}\int_B(|f|^2+|\Lambda^s f|^2)\psi_\infty dRdx<\infty\}.$$

Denote that
$$E(t)=\|\rho\|^2_{H^{s}}+\|u\|^2_{H^{s}}+\|g\|^2_{H^{s}(\mathcal{L}^{2})},$$
and
$$D(t)=\|\nabla\rho\|^2_{H^{s-1}}+\mu\|\nabla u\|^2_{H^{s}}+(\mu+\mu')\|div~u\|^2_{H^{s}}+\|\nabla_R g\|^2_{H^{s}(\mathcal{L}^{2})}.$$

We now recall the Littlewood-Paley decomposition theory in the following Proposition.
\begin{prop}\cite{Bahouri2011}\label{pro0}
Let $\mathcal{C}$ be the annulus $\{\xi\in\mathbb{R}^d:\frac 3 4\leq|\xi|\leq\frac 8 3\}$. There exist radial function $\varphi$, valued in the interval $[0,1]$, belonging respectively to $\mathcal{D}(\mathcal{C})$, and such that
$$ \forall\xi\in\mathbb{R}^d\backslash\{0\},\ \sum_{j\in\mathbb{Z}}\varphi(2^{-j}\xi)=1, $$
$$ |j-j'|\geq 2\Rightarrow\mathrm{Supp}\ \varphi(2^{-j}\cdot)\cap \mathrm{Supp}\ \varphi(2^{-j'}\cdot)=\emptyset. $$
Further, we have
$$ \forall\xi\in\mathbb{R}^d\backslash\{0\},\ \frac 1 2\leq\sum_{j\in\mathbb{Z}}\varphi^2(2^{-j}\xi)\leq 1. $$
\end{prop}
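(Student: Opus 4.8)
The plan is to construct $\varphi$ explicitly from a single radial cutoff and then check the four claimed properties by elementary manipulations. First I would fix a radial function $\chi\in\mathcal{D}(\mathbb{R}^d)$, valued in $[0,1]$ and non-increasing along rays, with $\chi\equiv1$ on the closed ball $\overline{B(0,3/4)}$ and $\mathrm{Supp}\,\chi\subset B(0,4/3)$; such a $\chi$ exists by mollifying the indicator of a ball of intermediate radius. I would then set
$$\varphi(\xi):=\chi(\xi/2)-\chi(\xi).$$
Since $|\xi/2|\le|\xi|$ and $\chi$ is radially non-increasing, $\varphi\ge0$; and $\varphi\le\chi(\xi/2)\le1$, so $\varphi$ takes values in $[0,1]$, with smoothness and compact support immediate. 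For the support, if $|\xi|\le3/4$ then $\chi(\xi/2)=\chi(\xi)=1$, hence $\varphi(\xi)=0$, while if $|\xi|\ge8/3$ then $\chi(\xi/2)=\chi(\xi)=0$; therefore $\mathrm{Supp}\,\varphi\subset\mathcal{C}=\{3/4\le|\xi|\le8/3\}$, i.e. $\varphi\in\mathcal{D}(\mathcal{C})$.

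For the partition-of-unity identity I would telescope: for $\xi\ne0$ and integers $N,M$,
$$\sum_{j=-N}^{M}\varphi(2^{-j}\xi)=\sum_{j=-N}^{M}\bigl(\chi(2^{-j-1}\xi)-\chi(2^{-j}\xi)\bigr)=\chi(2^{-M-1}\xi)-\chi(2^{N}\xi).$$
Letting $M\to\infty$ gives $\chi(2^{-M-1}\xi)\to\chi(0)=1$, and letting $N\to\infty$ gives $\chi(2^{N}\xi)\to0$ since $\chi$ has compact support and $\xi\ne0$; moreover at each fixed $\xi\ne0$ only finitely many summands are nonzero, because $\varphi(2^{-j}\xi)\ne0$ forces $\tfrac34\,2^{j}\le|\xi|\le\tfrac83\,2^{j}$. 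Hence $\sum_{j\in\mathbb{Z}}\varphi(2^{-j}\xi)=1$. The same support constraint yields the separation property: if $\varphi(2^{-j}\xi)\ne0$ and $\varphi(2^{-j'}\xi)\ne0$ with $j'\ge j+2$, then $|\xi|\le\tfrac83\,2^{j}$ from the first and $|\xi|\ge\tfrac34\,2^{j'}\ge\tfrac34\,2^{j+2}=3\cdot2^{j}$ from the second, which is impossible because $3>\tfrac83$; so $|j-j'|\ge2$ implies $\mathrm{Supp}\,\varphi(2^{-j}\cdot)\cap\mathrm{Supp}\,\varphi(2^{-j'}\cdot)=\emptyset$.

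Finally, for the square bounds: since the summands are nonnegative, $\sum_{j}\varphi^2(2^{-j}\xi)\le\bigl(\sum_{j}\varphi(2^{-j}\xi)\bigr)^2=1$; and by the separation property at most two (necessarily consecutive) indices contribute at any $\xi\ne0$, so writing the sum as $a+b=1$ with $a,b\ge0$ gives $\sum_{j}\varphi^2(2^{-j}\xi)=a^2+b^2\ge\tfrac12(a+b)^2=\tfrac12$, which completes the proof. There is no genuine obstacle here; the only points requiring care are choosing $\chi$ radially non-increasing so that $\varphi\ge0$, and the bookkeeping that matches $\mathrm{Supp}\,\varphi$ to the annulus $\mathcal{C}$ together with the numerical inequality $\tfrac34\cdot4=3>\tfrac83$ underlying the support separation.
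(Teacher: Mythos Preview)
Your construction is the standard one and is correct in every detail; the paper itself does not supply a proof for this proposition but simply cites it from \cite{Bahouri2011}, so there is nothing to compare against. Your argument is essentially the proof found in that reference: build $\varphi$ as the difference of a radial cutoff and its dilate, telescope for the partition of unity, read off the support separation from the annulus radii, and deduce the square bounds from the fact that at most two consecutive terms survive.
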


Let $u$ be a tempered distribution in $\mathcal{S}'_h(\mathbb{R}^d)$. For all $j\in\mathbb{Z}$, define
$$
\dot{\Delta}_j u=\mathcal{F}^{-1}(\varphi(2^{-j}\cdot)\mathcal{F}u).
$$
Then the Littlewood-Paley decomposition is given as follows:
$$ u=\sum_{j\in\mathbb{Z}}\dot{\Delta}_j u \quad \text{in}\ \mathcal{S}'(\mathbb{R}^d). $$

Let $s\in\mathbb{R},\ 1\leq p,r\leq\infty.$ The homogeneous Besov space $\dot{B}^s_{p,r}$ and $\dot{B}^s_{p,r}(\mathcal{L}^q)$ are defined by
$$ \dot{B}^s_{p,r}=\{u\in \mathcal{S}'_h:\|u\|_{\dot{B}^s_{p,r}}=\Big\|(2^{js}\|\dot{\Delta}_j u\|_{L^p})_j \Big\|_{l^r(\mathbb{Z})}<\infty\}, $$
$$ \dot{B}^s_{p,r}(\mathcal{L}^q)=\{\phi\in \mathcal{S}'_h:\|\phi\|_{\dot{B}^s_{p,r}(\mathcal{L}^q)}=\Big\|(2^{js}\|\dot{\Delta}_j \phi\|_{L_{x}^{p}(\mathcal{L}^q)})_j \Big\|_{l^r(\mathbb{Z})}<\infty\}.$$

We agree that $f\lm g$ represents $f\leq Cg$ with a constant $C$ and $\nabla$ stands for $\nabla_x$ and $div$ stands for $div_x$.

The following lemma is the Gagliardo-Nirenberg inequality of Sobolev type.
\begin{lemm}\cite{1959On}\label{Lemma0}
Let $d\geq2,~p\in[2,+\infty)$ and $0\leq s,s_1\leq s_2$, then there exists a constant $C$ such that
 $$\|\Lambda^{s}f\|_{L^{p}}\leq C \|\Lambda^{s_1}f\|^{1-\theta}_{L^{2}}\|\Lambda^{s_2} f\|^{\theta}_{L^{2}},$$
where $0\leq\theta\leq1$ and $\theta$ satisfy
$$ s+d(\frac 1 2 -\frac 1 p)=s_1 (1-\theta)+\theta s_2.$$
Note that we require that $0<\theta<1$, $0\leq s_1\leq s$, when $p=\infty$.
\end{lemm}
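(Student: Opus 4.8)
The plan is to reduce the inequality to two classical ingredients: the scaling-critical Hardy--Littlewood--Sobolev embedding of homogeneous Sobolev spaces into $L^p$, and a one-line H\"older interpolation carried out on the Fourier side. Set $\alpha:=d(\frac 1 2-\frac 1 p)\in[0,\frac d 2)$ and $\sigma:=s+\alpha$, so that the constraint on $\theta$ reads $\sigma=(1-\theta)s_1+\theta s_2$ with $0\le\theta\le1$, hence $s_1\le\sigma\le s_2$. I may assume the right-hand side is finite, i.e. $f\in\dot{H}^{s_1}\cap\dot{H}^{s_2}$; since $|\xi|^{\sigma}\le|\xi|^{s_1}+|\xi|^{s_2}$ on $\mathbb{R}^d$ this also gives $f\in\dot{H}^{\sigma}$, so that the Fourier-multiplier manipulations below are legitimate in $\mathcal{S}'_h(\mathbb{R}^d)$.

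\emph{Step 1 (interpolating the $\dot{H}^{\sigma}$ norm).} By Plancherel's identity and the factorisation $|\xi|^{2\sigma}|\widehat{f}|^2=\big(|\xi|^{2s_1}|\widehat{f}|^2\big)^{1-\theta}\big(|\xi|^{2s_2}|\widehat{f}|^2\big)^{\theta}$, H\"older's inequality with conjugate exponents $\tfrac{1}{1-\theta}$ and $\tfrac{1}{\theta}$ (the cases $\theta\in\{0,1\}$ being trivial) yields
$$\|\Lambda^{\sigma}f\|_{L^2}^2=\int_{\mathbb{R}^d}|\xi|^{2\sigma}|\widehat{f}(\xi)|^2\,d\xi\le\Big(\int_{\mathbb{R}^d}|\xi|^{2s_1}|\widehat{f}|^2\Big)^{1-\theta}\Big(\int_{\mathbb{R}^d}|\xi|^{2s_2}|\widehat{f}|^2\Big)^{\theta},$$
that is, $\|\Lambda^{\sigma}f\|_{L^2}\le\|\Lambda^{s_1}f\|_{L^2}^{1-\theta}\,\|\Lambda^{s_2}f\|_{L^2}^{\theta}$.

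\emph{Step 2 (Sobolev embedding) and conclusion.} Write $\Lambda^{s}f=\mathcal{F}^{-1}\big(|\xi|^{-\alpha}\,|\xi|^{\sigma}\widehat{f}\big)=I_{\alpha}(\Lambda^{\sigma}f)$, where $I_{\alpha}$ is the Riesz potential of order $\alpha$. When $2<p<\infty$ one has $0<\alpha<\frac d2$ and $\frac 1p=\frac 12-\frac{\alpha}{d}$, so the Hardy--Littlewood--Sobolev inequality gives $\|\Lambda^{s}f\|_{L^p}=\|I_{\alpha}(\Lambda^{\sigma}f)\|_{L^p}\le C\|\Lambda^{\sigma}f\|_{L^2}$ (for $p=2$ one has $\alpha=0$ and $I_0$ is the identity, so this is immediate). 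Combining with Step 1 gives $\|\Lambda^{s}f\|_{L^p}\le C\|\Lambda^{s_1}f\|_{L^2}^{1-\theta}\|\Lambda^{s_2}f\|_{L^2}^{\theta}$, which is the claim.

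\emph{An alternative, and where the difficulty sits.} One can instead argue entirely inside the Littlewood--Paley framework of Proposition~\ref{pro0}, which is closer to the classical source: a blockwise Bernstein estimate gives $\|\Lambda^{s}f\|_{L^p}\lm\sum_{j\in\mathbb{Z}}2^{j\sigma}\|\dot{\Delta}_j f\|_{L^2}$, and one splits this series at a frequency $2^N\sim\big(\|\Lambda^{s_2}f\|_{L^2}/\|\Lambda^{s_1}f\|_{L^2}\big)^{1/(s_2-s_1)}$, bounding the low-frequency part by Cauchy--Schwarz through $\sum_{j\le N}2^{j(\sigma-s_1)}\big(2^{js_1}\|\dot{\Delta}_j f\|_{L^2}\big)$ and the high-frequency part through $\sum_{j>N}2^{j(\sigma-s_2)}\big(2^{js_2}\|\dot{\Delta}_j f\|_{L^2}\big)$, using $\sigma-s_1>0>\sigma-s_2$ (here $0<\theta<1$ is needed) together with $\sum_j 2^{2js_i}\|\dot{\Delta}_j f\|_{L^2}^2\sim\|\Lambda^{s_i}f\|_{L^2}^2$; the endpoints $\theta\in\{0,1\}$ still go through Step 2. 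This is also the reason for the caveat ``$0<\theta<1$ when $p=\infty$'': in that limiting case Step 2 must be replaced by $\dot{B}^{d/2}_{2,1}\hookrightarrow L^\infty$, and the $\ell^1$-summation of the Littlewood--Paley series then genuinely requires strict interpolation. I do not expect a real obstacle here: the only points needing care are keeping track of the exponents $\alpha,\sigma,\theta$, the degenerate case $p=2$ (equivalently $\theta\in\{0,1\}$), and the standard convention that homogeneous norms are understood modulo polynomials on $\mathcal{S}'_h$.
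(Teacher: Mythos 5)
Your proof is correct. The paper does not actually prove this lemma; it is quoted from Nirenberg's 1959 paper, so there is no in-text argument to compare against. Your route — Plancherel/H\"older interpolation of the $\dot H^{\sigma}$ norm between $\dot H^{s_1}$ and $\dot H^{s_2}$, followed by the Hardy--Littlewood--Sobolev embedding $\dot H^{\alpha}\hookrightarrow L^p$ with $\alpha=d(\tfrac12-\tfrac1p)$ — is the standard modern proof of this $L^2$-based Gagliardo--Nirenberg variant, and it is cleaner than Nirenberg's original real-variable (slicing and integration-by-parts) argument, which covers general $L^{p_1},L^{p_2}$ norms on the right-hand side but is considerably longer. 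All the exponent bookkeeping checks out: $\sigma=s+\alpha$ lies in $[s_1,s_2]$ exactly when $\theta\in[0,1]$, the degenerate cases $p=2$ and $\theta\in\{0,1\}$ are handled, and your remark that the $p=\infty$ endpoint forces $0<\theta<1$ (via $\dot B^{d/2}_{2,1}\hookrightarrow L^\infty$ and the $\ell^1$ summation in the Littlewood--Paley alternative) correctly explains the caveat in the statement. No gaps.
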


The following lemmas are useful for estimating $\tau$.
\begin{lemm}\cite{Masmoudi2008}\label{Lemma1}
 If $\int_B g\psi_\infty dR=0$, then there exists a constant $C$ such that
 $$\|g\|_{\mathcal{L}^{2}}\leq C \|\nabla _{R} g\|_{\mathcal{L}^{2}}.$$
\end{lemm}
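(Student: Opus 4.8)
This is a weighted Poincar\'e (spectral-gap) inequality for the probability measure $\psi_\infty\,dR$ on $B=B(0,1)$; the only difficulty is that the weight $\psi_\infty(R)=c_k(1-|R|^2)^k$ degenerates at $\partial B$. The plan is to argue by contradiction. If the inequality failed there would be a sequence $(g_n)$ with $g_n,\nabla_R g_n\in\mathcal{L}^2$, $\int_B g_n\psi_\infty\,dR=0$, $\|g_n\|_{\mathcal{L}^2}=1$ and $\|\nabla_R g_n\|_{\mathcal{L}^2}\to0$. I would show $(g_n)$ is precompact in $\mathcal{L}^2$; its limit $g$ then satisfies $\|g\|_{\mathcal{L}^2}=1$, and since $g_n\to g$ in $\mathcal{L}^2$ while $\nabla_R g_n\to0$ in $\mathcal{L}^2$ one gets $\nabla_R g=0$, so $g$ is a constant, and passing to the limit in $\int_B g_n\psi_\infty\,dR=0$ (legitimate because $\mathcal{L}^2\hookrightarrow L^1(\psi_\infty\,dR)$ by Cauchy--Schwarz and $\int_B\psi_\infty\,dR=1$) forces $g\equiv0$, contradicting $\|g\|_{\mathcal{L}^2}=1$.

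The crux is the precompactness. Fix $\delta\in(0,\tfrac12)$ and split $B=B_{1-\delta}\cup(B\setminus B_{1-\delta})$. On the interior ball $\psi_\infty$ is bounded above and below by positive constants (the lower bound comparable to $\delta^k$), so $(g_n)$ is bounded in the unweighted space $H^1(B_{1-\delta})$ and classical Rellich--Kondrachov gives a subsequence converging in $L^2(B_{1-\delta})$, hence in $\mathcal{L}^2(B_{1-\delta})$; a diagonal argument over $\delta=1/m$ produces a subsequence that is Cauchy in $\mathcal{L}^2(B_{1-\delta})$ for every $\delta$. It remains to control the boundary layer uniformly in $n$, i.e.\ to show $\sup_n\int_{B\setminus B_{1-\delta}}|g_n|^2\psi_\infty\,dR\to0$ as $\delta\to0$. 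Writing $R=r\omega$ and using $|\nabla_R g|^2\ge|\partial_r g|^2$, I would apply along each ray the one-dimensional Hardy inequality
\begin{equation*}
\int_{1-\delta}^{1}\big|g(r,\omega)-g(1-\delta,\omega)\big|^2(1-r^2)^k r^{d-1}\,dr\ \le\ C\,\delta^2\int_{1-\delta}^{1}|\partial_r g(r,\omega)|^2(1-r^2)^k r^{d-1}\,dr ,
\end{equation*}
whose validity, together with the explicit $\delta^2$ gain, follows from the Fubini computation for Hardy's inequality: the product of the tail integrals $\big(\int_t^1(1-s)^k\,ds\big)\big((1-t)^{-k}\big)$ scales like $(1-t)$, which is integrable on $(1-\delta,1)$ with integral $\sim\delta^2$ --- this is the Muckenhoupt-type balance that makes the inequality go through for all $k>0$ even though $1/\psi_\infty$ is not integrable at $\partial B$ when $k\ge1$. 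Integrating in $\omega$, estimating $\|g_n(1-\delta,\cdot)\|_{L^2(S^{d-1})}^2\lesssim\delta^{-k}\big(\|g_n\|_{\mathcal{L}^2}^2+\|\nabla_R g_n\|_{\mathcal{L}^2}^2\big)$ via the trace theorem on $B_{1-\delta}$ and $\psi_\infty\gtrsim\delta^k$ there, and using $\int_{B\setminus B_{1-\delta}}\psi_\infty\,dR\lesssim\delta^{k+1}$, one obtains $\int_{B\setminus B_{1-\delta}}|g_n|^2\psi_\infty\,dR\lesssim\delta^2\|\nabla_R g_n\|_{\mathcal{L}^2}^2+\delta\big(1+\|\nabla_R g_n\|_{\mathcal{L}^2}^2\big)\lesssim\delta$, uniformly in $n$ since $(\|\nabla_R g_n\|_{\mathcal{L}^2})$ is bounded. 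Combining interior convergence with uniform smallness of the tails shows $(g_n)$ is Cauchy, hence convergent, in $\mathcal{L}^2$, which closes the contradiction.

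Equivalently, the same estimates show the self-adjoint operator $\mathcal{L}$ has compact resolvent on $\mathcal{L}^2$, hence discrete spectrum $0=\lambda_0<\lambda_1\le\cdots$ with $\ker\mathcal{L}$ the constants; the min--max principle then gives $\int_B|\nabla_R g|^2\psi_\infty\,dR=\langle\mathcal{L}g,g\rangle_{\mathcal{L}^2}\ge\lambda_1\|g\|_{\mathcal{L}^2}^2$ for all $g\perp_{\mathcal{L}^2}1$, which is the claim with $C=\lambda_1^{-1/2}$. The main obstacle is entirely the boundary-layer estimate: because $\psi_\infty$ vanishes on $\partial B$, Rellich compactness is not available up to the boundary and one must rule out concentration of $|g_n|^2\psi_\infty$ near $\partial B$ by hand; the weighted one-dimensional Hardy inequality above, with its favorable $\delta^2$ scaling, is the decisive ingredient.
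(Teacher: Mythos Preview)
The paper does not prove this lemma at all: it is stated with a citation to \cite{Masmoudi2008} and used as a black box throughout. There is therefore no ``paper's own proof'' to compare your attempt against.

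That said, your approach is a standard and essentially sound route to such weighted Poincar\'e inequalities: contradiction plus compactness, with interior Rellich and a boundary-layer Hardy estimate to rule out mass escaping to $\partial B$ where $\psi_\infty$ degenerates. The Muckenhoupt balance you invoke is correct---for the weight $(1-r)^k$ on $(1-\delta,1)$ the product $\big(\int_t^1(1-s)^k\,ds\big)\big(\int_{1-\delta}^t(1-s)^{-k}\,ds\big)$ indeed scales like $\delta^2$ uniformly in $t$, so the one-dimensional Hardy inequality with constant $C\delta^2$ holds for every $k>0$. The trace bound $\|g_n(1-\delta,\cdot)\|_{L^2(S^{d-1})}^2\lesssim\delta^{-k}$ combined with $\int_{B\setminus B_{1-\delta}}\psi_\infty\,dR\lesssim\delta^{k+1}$ then gives the uniform $O(\delta)$ tail, and the argument closes. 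Your spectral reformulation via compact resolvent of $\mathcal{L}$ is also correct and is in fact closer in spirit to how the result is typically stated in the polymer literature (as a spectral gap for the Fokker--Planck operator). One small point worth tightening if you write this up: when you pass from $g_n\to g$ in $\mathcal{L}^2$ and $\nabla_R g_n\to0$ in $\mathcal{L}^2$ to $\nabla_R g=0$, make explicit that this is in the sense of weak derivatives against test functions supported in the interior (where the weight is nondegenerate), which already forces $g$ constant on $B$.
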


\begin{lemm}\label{Lemma2}
\cite{Masmoudi2008} For any $\delta>0$, there exists a constant $C_{\delta}$ such that
$$|\tau(g)|^2\leq\delta\|\nabla _{R}g\|^2_{\mathcal{L}^{2}}
+C_{\delta}\|g\|^2_{\mathcal{L}^{2}}.$$  \\
If $(p-1)k>1$, then
$$|\tau(g)|\leq C\|g\|_{\mathcal{L}^{p}}.$$
\end{lemm}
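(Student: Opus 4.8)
The plan is to prove the two assertions separately, using the explicit FENE potential $\mathcal{U}(R)=-k\log(1-|R|^2)$, so that $\nabla_R\mathcal{U}(R)=\frac{2kR}{1-|R|^2}$, together with the two structural facts that $\psi_\infty\nabla_R\mathcal{U}=-\nabla_R\psi_\infty$ (since $\psi_\infty$ is a constant multiple of $e^{-\mathcal{U}}$) and that $\psi_\infty$ vanishes on $\partial B$ whenever $k>0$.

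The second inequality is the quicker one. I would estimate directly by H\"older's inequality in the weighted space: writing, for $\frac1p+\frac1{p'}=1$,
$$\tau_{ij}(g)=\int_B\big(R_i\nabla_{Rj}\mathcal{U}\,\psi_\infty^{1/p'}\big)\big(g\,\psi_\infty^{1/p}\big)\,dR,$$
one obtains $|\tau_{ij}(g)|\le\big(\int_B|R_i\nabla_{Rj}\mathcal{U}|^{p'}\psi_\infty\,dR\big)^{1/p'}\|g\|_{\mathcal{L}^p}$. Near $\partial B$ the weight $|R_i\nabla_{Rj}\mathcal{U}|^{p'}\psi_\infty$ behaves like $(1-|R|^2)^{k-p'}$, and since $|R|\le1$ the integral over $B$ is finite exactly when $k-p'>-1$, i.e. when $(p-1)k>1$; summing over $i,j$ gives $|\tau(g)|\le C\|g\|_{\mathcal{L}^p}$.

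For the first inequality the difficulty is precisely that $\int_B|\nabla_R\mathcal{U}|^2\psi_\infty\,dR$ diverges when $0<k\le1$, so $\tau$ cannot be bounded by $\|g\|_{\mathcal{L}^2}$ alone: a derivative of $g$ must enter, but it should come with an arbitrarily small coefficient. To arrange this, fix $\eta\in(0,1)$ and a smooth radial cutoff $\chi_\eta$ with $\chi_\eta\equiv1$ on $\{|R|\le1-\eta\}$, $\chi_\eta\equiv0$ on $\{|R|\ge1-\eta/2\}$ and $|\nabla_R\chi_\eta|\le C\eta^{-1}$, and split
$$\tau_{ij}(g)=\int_B\chi_\eta\,R_i\nabla_{Rj}\mathcal{U}\,g\psi_\infty\,dR+\int_B(1-\chi_\eta)R_i\nabla_{Rj}\mathcal{U}\,g\psi_\infty\,dR.$$
On $\mathrm{supp}\,\chi_\eta$ one has $|R_i\nabla_{Rj}\mathcal{U}|\le C_\eta$, so the first term is bounded by $C_\eta\|g\|_{\mathcal{L}^2}$. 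In the second term I replace $\nabla_{Rj}\mathcal{U}\,\psi_\infty$ by $-\nabla_{Rj}\psi_\infty$ and integrate by parts in $R_j$; the boundary term on $\partial B$ vanishes because $\psi_\infty|_{\partial B}=0$, which leaves
$$\int_B(1-\chi_\eta)R_i\nabla_{Rj}\mathcal{U}\,g\psi_\infty\,dR=\int_B\Big(-(\nabla_{Rj}\chi_\eta)R_i g+(1-\chi_\eta)\delta_{ij}g+(1-\chi_\eta)R_i\nabla_{Rj}g\Big)\psi_\infty\,dR.$$
The first two summands are again controlled by $C_\eta\|g\|_{\mathcal{L}^2}$, while the last one is bounded by $\big(\int_{|R|>1-\eta}|R|^2\psi_\infty\,dR\big)^{1/2}\|\nabla_R g\|_{\mathcal{L}^2}$. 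Since $\psi_\infty\in L^1(B)$, the weight integral $\int_{|R|>1-\eta}|R|^2\psi_\infty\,dR\to0$ as $\eta\to0$; given $\delta>0$ I pick $\eta$ small enough that it is $\le\delta$. Squaring, summing over $i,j$, and absorbing the $\eta$-dependent constants into $C_\delta$ gives $|\tau(g)|^2\le\delta\|\nabla_R g\|^2_{\mathcal{L}^2}+C_\delta\|g\|^2_{\mathcal{L}^2}$.

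The step I would be most careful about is the integration by parts near the spherical boundary $\partial B$: it should first be justified for $g$ smooth and compactly supported in $B$, where the vanishing of $\psi_\infty$ on $\partial B$ makes the boundary integral disappear, and then extended to general $g\in H^1(\mathcal{L}^2)$ by density, using the integrability of $\psi_\infty$ and $|R|^2\psi_\infty$ on $B$. All remaining estimates are just Cauchy--Schwarz and H\"older in the $\psi_\infty$-weighted Lebesgue spaces.
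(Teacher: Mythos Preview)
Your argument is correct, and there is nothing in the paper to compare it against: Lemma~\ref{Lemma2} is stated with a citation to \cite{Masmoudi2008} and is not proved in the paper itself. The cutoff--plus--integration--by--parts argument you give for the first inequality, exploiting $\psi_\infty\nabla_R\mathcal{U}=-\nabla_R\psi_\infty$ and the smallness of $\int_{|R|>1-\eta}\psi_\infty\,dR$, is precisely the mechanism behind the original proof, and your H\"older computation for the second inequality with the integrability condition $k-p'>-1\Leftrightarrow (p-1)k>1$ is the standard one.

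The one point you rightly flag as delicate---the vanishing of the boundary contribution in the integration by parts---does require care. For general $g$ with only $g,\nabla_R g\in\mathcal{L}^2$, the product $g\psi_\infty$ need not have a pointwise trace on $\partial B$, so the density route you outline is indeed the clean way to proceed: establish the inequality first for smooth $g$ compactly supported in $B$, then pass to the limit using that both sides are continuous with respect to the weighted $H^1$ norm. The density of such $g$ in the weighted space $\{g:\,g,\nabla_R g\in\mathcal{L}^2\}$ for the FENE weight $(1-|R|^2)^k$ is known (it is part of the functional setup in \cite{Masmoudi2008}), so this closes the argument.
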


To get the optimal $L^2$ decay rate, we need a more precise estimate of $\tau$.
\begin{lemm}\cite{2021Global}\label{Lemma3}
If $\|g\|_{\mathcal{L}^{2}}\leq C \|\nabla _{R} g\|_{\mathcal{L}^{2}}<\infty$, there exists a constant $C_{1}$ such that
\begin{align}\label{ha1}
|\tau(g)|\leq C_{1}\|g\|^{\frac {k+1} 2}_{\mathcal{L}^{2}}
\|\nabla _{R}g\|^{\frac {1-k} 2}_{\mathcal{L}^{2}},~~for~0<k<1,
\end{align}
and
\begin{align}\label{ha2}
|\tau(g)|\leq C_{1}\|g\|^{\frac {2n} {2n+1}}_{\mathcal{L}^{2}}
\|\nabla _{R}g\|^{\frac {1} {2n+1}}_{\mathcal{L}^{2}},~~for~k=1~and~\forall n\geq1.
\end{align}
\end{lemm}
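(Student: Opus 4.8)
The plan is to first reduce $|\tau(g)|$ to a single weighted integral in $R$ and then interpolate that integral between $\|g\|_{\mathcal{L}^{2}}$ and $\|\nabla_{R}g\|_{\mathcal{L}^{2}}$. Since $\mathcal{U}(R)=-k\log(1-|R|^{2})$, one has $\nabla_{R}\mathcal{U}=\frac{2kR}{1-|R|^{2}}$ and $\psi_{\infty}(R)=(1-|R|^{2})^{k}/\int_{B}(1-|R|^{2})^{k}\,dR$, so that
$$\tau_{ij}(g)=\frac{2k}{\int_{B}(1-|R|^{2})^{k}\,dR}\int_{B}R_{i}R_{j}(1-|R|^{2})^{k-1}g\,dR,$$
and, using $|R|\le1$ on $B=B(0,1)$,
$$|\tau(g)|\le C\int_{B}(1-|R|^{2})^{k-1}|g(R)|\,dR.$$
The powers appearing in \eqref{ha1}--\eqref{ha2} are exactly those dictated by the scaling of the three quantities $\int t^{k-1}|g|\,dt$, $\int t^{k}|g|^{2}\,dt$ and $\int t^{k}|\partial_{t}g|^{2}\,dt$ in the boundary layer $t:=1-|R|\to0$: this scaling is subcritical when $0<k<1$ and critical when $k=1$, which is why the two estimates have different shapes and why the auxiliary parameter $n$ is needed in the second one.

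For $0<k<1$ I would fix $\delta\in(0,\tfrac14]$ and split $B$ into the bulk $\{1-|R|^{2}\ge\delta\}$ and the boundary layer $\{1-|R|^{2}<\delta\}$. On the bulk the weight $(1-|R|^{2})^{k-1}$ is bounded and a Cauchy--Schwarz against $\psi_{\infty}$ is admissible since $\int_{\{1-|R|^{2}\ge\delta\}}(1-|R|^{2})^{k-2}\,dR\lesssim\delta^{k-1}$, which yields a contribution $\lesssim\delta^{(k-1)/2}\|g\|_{\mathcal{L}^{2}}$. On the boundary layer I would represent $|g(R)|$ by integrating $\partial_{r}g$ radially from a sphere $\{1-|R|^{2}\simeq\delta\}$, averaging the base point over a shell of width $\simeq\delta$; the shell term again contributes $\lesssim\delta^{(k-1)/2}\|g\|_{\mathcal{L}^{2}}$, while the radial-derivative term contributes $\lesssim\delta^{(k+1)/2}\|\nabla_{R}g\|_{\mathcal{L}^{2}}$, using $|\nabla_{R}g|\ge|\partial_{r}g|$ and the convergence of $\int_{0}^{\delta}t^{-k}\,dt$ (this last fact is where $k<1$ is essential). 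Collecting the pieces gives $|\tau(g)|\lesssim\delta^{(k-1)/2}\|g\|_{\mathcal{L}^{2}}+\delta^{(k+1)/2}\|\nabla_{R}g\|_{\mathcal{L}^{2}}$, and the choice $\delta=\min\{\tfrac14,\,\|g\|_{\mathcal{L}^{2}}/\|\nabla_{R}g\|_{\mathcal{L}^{2}}\}$ produces \eqref{ha1}: when the minimum is the ratio the two terms balance to exactly $\|g\|_{\mathcal{L}^{2}}^{(k+1)/2}\|\nabla_{R}g\|_{\mathcal{L}^{2}}^{(1-k)/2}$, and the hypothesis $\|g\|_{\mathcal{L}^{2}}\le C\|\nabla_{R}g\|_{\mathcal{L}^{2}}$ (which forbids $g$ from being almost constant) is precisely what makes this choice admissible and absorbs the leftover case $\delta=\tfrac14$.

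For $k=1$ the weight disappears, but $\int_{B}\psi_{\infty}^{-1}\,dR=\infty$, so the scaling above is critical and the previous argument degenerates logarithmically as $\delta\to0$. Instead, for each fixed $n\ge1$, I would invoke Lemma \ref{Lemma2} with $p=2+\tfrac1n$: since $(p-1)k=1+\tfrac1n>1$ we get $|\tau(g)|\le C\|g\|_{\mathcal{L}^{2+1/n}}$, and it then suffices to apply the weighted Gagliardo--Nirenberg inequality $\|g\|_{\mathcal{L}^{q}}\lesssim\|g\|_{\mathcal{L}^{2}}^{1-\theta}\|\nabla_{R}g\|_{\mathcal{L}^{2}}^{\theta}$ with $\theta=1-\tfrac2q$ and $q=2+\tfrac1n$, so that $1-\theta=\tfrac{2n}{2n+1}$ and $\theta=\tfrac1{2n+1}$; this gives \eqref{ha2}. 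The restriction to the discrete family $p=2+\tfrac1n$ (rather than all $p>2$) merely records that the constants blow up as $p\downarrow2$.

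I expect the main obstacle to be the bookkeeping in the $0<k<1$ case: getting the powers of $\delta$ in the boundary layer exactly right — especially the shell-averaging step that converts the trace of $g$ on spheres near $\partial B$ into $\|g\|_{\mathcal{L}^{2}}$ with the correct power of $\delta$ — and checking that each weighted $R$-integral produced by Cauchy--Schwarz genuinely converges, which is exactly where the hypothesis $k<1$ enters. A softer approach through Lemma \ref{Lemma2} and weighted Gagliardo--Nirenberg would only recover the exponents of \eqref{ha1} up to an arbitrarily small loss, so the direct argument seems unavoidable for the endpoint estimate \eqref{ha1}; for $k=1$ the corresponding technical content is packed into the weighted Gagliardo--Nirenberg inequality, whose proof runs along the same boundary-layer lines.
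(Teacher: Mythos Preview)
The paper does not prove this lemma; it is quoted from the authors' earlier work \cite{2021Global} and stated without proof, so there is no argument in the present paper to compare against. I can therefore only comment on the soundness of your proposal on its own.

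Your treatment of the case $0<k<1$ is correct. The bulk/boundary-layer splitting, the shell-averaging to control the trace term, and the Cauchy--Schwarz on the radial derivative (which uses $\int_{0}^{\delta}t^{-k}\,dt<\infty$ precisely when $k<1$) all go through, and optimizing over $\delta$ yields exactly the exponents in \eqref{ha1}.

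For $k=1$ your strategy --- invoke Lemma \ref{Lemma2} with some $p>2$ and then interpolate --- is the right one, but the specific weighted Gagliardo--Nirenberg exponent you assert, $\theta=1-2/q$, is not correct. That value corresponds to an effective dimension $2$, whereas the measure $\psi_{\infty}\,dR=(1-|R|^{2})\,dR$ on $B\subset\mathbb{R}^{d}$ behaves near the boundary like $t\,dt\,d\omega$ and has effective Sobolev dimension $d+1$. A family of test functions concentrated on a set of radial thickness $\delta$ and angular aperture $\rho$ (with $\delta\sim\rho\to0$) shows that the inequality $\|g\|_{\mathcal{L}^{q}}\lesssim\|g\|_{\mathcal{L}^{2}}^{2/q}\|\nabla_{R}g\|_{\mathcal{L}^{2}}^{1-2/q}$ fails for every $d\ge2$; the admissible exponent is $\theta\ge(d+1)(q-2)/(2q)$. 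This does not break your argument, only your bookkeeping: for a given target $n$ in \eqref{ha2} you should take $p=2+1/m$ in Lemma \ref{Lemma2} with $m$ large enough that $(d+1)/(2(2m+1))\le 1/(2n+1)$, and then use the hypothesis $\|g\|_{\mathcal{L}^{2}}\le C\|\nabla_{R}g\|_{\mathcal{L}^{2}}$ to pass from the resulting exponent to $1/(2n+1)$. With that adjustment the $k=1$ case goes through as you outline.
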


\begin{coro}\label{rema}
According to Lemma \ref{Lemma1}-\ref{Lemma3}, if $\int_B g\psi_\infty dR=0$, for any $k>0$, we have $|\tau(g)|\leq C_{1}\|g\|^{\frac 1 2}_{\mathcal{L}^{2}}
\|\nabla _{R}g\|^{\frac 1 2}_{\mathcal{L}^{2}}$.
\end{coro}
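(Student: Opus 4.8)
The goal is Corollary~\ref{rema}: for $\int_B g\psi_\infty dR=0$ and \emph{any} $k>0$, one has $|\tau(g)|\leq C_1\|g\|_{\mathcal{L}^2}^{1/2}\|\nabla_R g\|_{\mathcal{L}^2}^{1/2}$. The plan is simply to combine the three preceding lemmas by splitting on the range of $k$, since the exponents in Lemma~\ref{Lemma3} are tailored to reproduce exactly the balanced $1/2,1/2$ exponents once $k$ is pushed into the correct regime.

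First I would observe that, by Lemma~\ref{Lemma1}, the hypothesis $\int_B g\psi_\infty dR=0$ guarantees $\|g\|_{\mathcal{L}^2}\leq C\|\nabla_R g\|_{\mathcal{L}^2}$, so the structural assumption needed in Lemma~\ref{Lemma3} is automatically satisfied; moreover this inequality lets us freely trade a factor of $\|g\|_{\mathcal{L}^2}$ for a factor of $\|\nabla_R g\|_{\mathcal{L}^2}$ whenever convenient. Next, I would treat three cases. For $0<k<1$: inequality \eqref{ha1} gives $|\tau(g)|\leq C_1\|g\|_{\mathcal{L}^2}^{(k+1)/2}\|\nabla_R g\|_{\mathcal{L}^2}^{(1-k)/2}$; writing this as $\|g\|_{\mathcal{L}^2}^{1/2}\|\nabla_R g\|_{\mathcal{L}^2}^{1/2}\cdot\big(\|g\|_{\mathcal{L}^2}/\|\nabla_R g\|_{\mathcal{L}^2}\big)^{k/2}$ and using $\|g\|_{\mathcal{L}^2}\lesssim\|\nabla_R g\|_{\mathcal{L}^2}$ bounds the last ratio by a constant, yielding the claim. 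For $k=1$: take $n=1$ in \eqref{ha2}, which gives exponents $2/3$ and $1/3$; again factor out $\|g\|_{\mathcal{L}^2}^{1/2}\|\nabla_R g\|_{\mathcal{L}^2}^{1/2}$ and bound the leftover $\big(\|g\|_{\mathcal{L}^2}/\|\nabla_R g\|_{\mathcal{L}^2}\big)^{1/6}$ by a constant via Lemma~\ref{Lemma1}. For $k>1$: here one uses Lemma~\ref{Lemma2}, noting that $(p-1)k>1$ holds with $p=2$ since $k>1$, so $|\tau(g)|\leq C\|g\|_{\mathcal{L}^2}$; then $\|g\|_{\mathcal{L}^2}=\|g\|_{\mathcal{L}^2}^{1/2}\|g\|_{\mathcal{L}^2}^{1/2}\lesssim\|g\|_{\mathcal{L}^2}^{1/2}\|\nabla_R g\|_{\mathcal{L}^2}^{1/2}$ by Lemma~\ref{Lemma1}, which is the desired bound.

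There is no real obstacle here; the only point requiring mild care is the bookkeeping of exponents and the uniformity of the constant across the three regimes — one should check that for $k$ near $1$ the constants from \eqref{ha1}, \eqref{ha2}, and Lemma~\ref{Lemma2} can be absorbed into a single $C_1$, and that the Poincaré-type constant of Lemma~\ref{Lemma1} does not degenerate. Since all of these are fixed finite constants depending only on $k$ and the domain $B$, taking the maximum yields a uniform $C_1$, completing the proof.
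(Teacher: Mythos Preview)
Your proof is correct and is precisely what the paper intends: the corollary is stated there without an explicit argument, simply as a consequence of Lemmas~\ref{Lemma1}--\ref{Lemma3}, and your case split on $k$ (using \eqref{ha1} for $0<k<1$, \eqref{ha2} for $k=1$, and the $p=2$ case of Lemma~\ref{Lemma2} for $k>1$, each time trading excess powers of $\|g\|_{\mathcal L^2}$ for $\|\nabla_R g\|_{\mathcal L^2}$ via Lemma~\ref{Lemma1}) is the natural way to assemble them. One minor remark: your closing concern about uniformity of $C_1$ ``for $k$ near $1$'' is unnecessary, since $k$ is a fixed parameter of the potential and the constant is allowed to depend on it.
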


\begin{lemm}\cite{Moser1966A}\label{Lemma4}
Let $s\geq 1$, $p,p_1,p_4\in (1,\infty)$ and $\frac 1 p =\frac 1 {p_1}+\frac 1 {p_2}=\frac 1 {p_3}+\frac 1 {p_4}$, then there exists a constant $C$ such that
$$\|[\Lambda^s, f]g\|_{L^p}\leq C(\|\Lambda^{s}f\|_{L^{p_1}}\|g\|_{L^{p_2}}+\|\nabla f\|_{L^{p_3}}\|\Lambda^{s-1}g\|_{L^{p_4}}),$$
and
$$\|[\Lambda^s, f]g\|_{L^2(\mathcal{L}^{2})}\leq C(\|\Lambda^{s}f\|_{L^2}\|g\|_{L^\infty(\mathcal{L}^{2})}+\|\nabla f\|_{L^\infty}\|\Lambda^{s-1}g\|_{L^2(\mathcal{L}^{2})}).$$
\end{lemm}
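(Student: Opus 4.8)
The plan is to derive both estimates from Bony's paraproduct decomposition, the mechanism being that commuting $\Lambda^s$ past a \emph{low}-frequency factor costs only one derivative rather than $s$. Write $[\Lambda^s,f]g:=\Lambda^s(fg)-f\Lambda^sg$. For the $\mathcal{L}^2$-valued inequality one observes first that this commutator acts purely in $x$ (since $f=f(x)$), so the whole argument runs in $x$ with values in the Hilbert space $\mathcal{L}^2(\psi_\infty dR)$, the $R$-variable being a passive parameter and Minkowski's inequality being used to pass the $\mathcal{L}^2$-norm through all convolutions; at the end only scalar modifications are needed.

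With $T_ab=\sum_jS_{j-1}a\,\dot{\Delta}_jb$, $S_{j-1}=\sum_{k\le j-2}\dot{\Delta}_k$, and $R(f,g)=\sum_j\dot{\Delta}_jf\,(\dot{\Delta}_{j-1}+\dot{\Delta}_j+\dot{\Delta}_{j+1})g$, the first step is the splitting
$$[\Lambda^s,f]g=\underbrace{\big(\Lambda^sT_fg-T_f\Lambda^sg\big)}_{\mathrm{I}}+\underbrace{\big(\Lambda^sT_gf-T_{\Lambda^sg}f\big)}_{\mathrm{II}}+\underbrace{\big(\Lambda^sR(f,g)-R(f,\Lambda^sg)\big)}_{\mathrm{III}}.$$
Term $\mathrm I$ is the genuine commutator, where the cancellation is used. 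Since $S_{j-1}f\,\dot{\Delta}_jg$ and $\dot{\Delta}_jg$ have Fourier support in an annulus $\sim2^j$, one may replace $\Lambda^s$ there by $m_j(D)$ with $m_j(\xi)=|\xi|^s\widetilde\varphi(2^{-j}\xi)$, $\widetilde\varphi\equiv1$ near $\mathrm{Supp}\,\varphi$, and write
$$\Lambda^s(S_{j-1}f\,\dot{\Delta}_jg)-S_{j-1}f\,\Lambda^s\dot{\Delta}_jg=\int k_j(y)\big(S_{j-1}f(\cdot-y)-S_{j-1}f(\cdot)\big)\dot{\Delta}_jg(\cdot-y)\,dy,$$
$k_j=\mathcal F^{-1}m_j$. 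The Taylor expansion $S_{j-1}f(x-y)-S_{j-1}f(x)=-\int_0^1y\cdot\nabla S_{j-1}f(x-ty)\,dt$ together with the scaling bound $\int|y|\,|k_j(y)|\,dy\lesssim2^{j(s-1)}$ bounds the $j$-th block pointwise by $2^{j(s-1)}$ times a maximal function of $\nabla f$ times a maximal function of $\dot{\Delta}_jg$ ($\dot{\Delta}_jg$ measured in $\mathcal{L}^2$ and $\nabla f$ in $L^\infty$ in the second case). Summing the blocks via the Littlewood--Paley square function, Hölder, the Fefferman--Stein vector-valued maximal inequality, $\|\nabla S_{j-1}f\|_{L^{p_3}}\lesssim\|\nabla f\|_{L^{p_3}}$, and the square-function characterization of $L^p$, $1<p<\infty$ (or, for $p=2$, just Plancherel and $\ell^2$-orthogonality), yields $\|\nabla f\|_{L^{p_3}}\|\Lambda^{s-1}g\|_{L^{p_4}}$, resp.\ $\|\nabla f\|_{L^\infty}\|\Lambda^{s-1}g\|_{L^2(\mathcal{L}^2)}$.

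Terms $\mathrm{II}$ and $\mathrm{III}$ carry no cancellation and are bounded directly. Each block $\Lambda^s(S_{j-1}g\,\dot{\Delta}_jf)$ has Fourier support in an annulus $\sim2^j$, so Bernstein gives $\lesssim2^{js}\|S_{j-1}g\|_{L^{p_2}}\|\dot{\Delta}_jf\|_{L^{p_1}}\lesssim\|g\|_{L^{p_2}}2^{js}\|\dot{\Delta}_jf\|_{L^{p_1}}$; summing gives $\|g\|_{L^{p_2}}\|\Lambda^sf\|_{L^{p_1}}$, and $T_{\Lambda^sg}f$ is identical once one notes $\|S_{j-1}\Lambda^sg\|_{L^{p_2}}=\|\Lambda^sS_{j-1}g\|_{L^{p_2}}\lesssim2^{js}\|g\|_{L^{p_2}}$. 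In $\mathrm{III}$ the pieces have Fourier support only in a ball of radius $\sim2^j$, so one first localizes $\Lambda^sR(f,g)$ by $\dot{\Delta}_k$ and sums over $j\gtrsim k$; here $s\ge1>0$ makes $\sum_{j\ge k}2^{(k-j)s}\lesssim1$, and Young's inequality for series again yields $\|g\|_{L^{p_2}}\|\Lambda^sf\|_{L^{p_1}}$ (and likewise for $R(f,\Lambda^sg)$). Adding $\mathrm I,\mathrm{II},\mathrm{III}$ gives the first inequality; the second follows line by line with $p=p_4=2$, $p_1=p_3=\infty$, $L^{p_2}$ replaced by $L^\infty(\mathcal{L}^2)$ and $L^{p_4}$ by $L^2(\mathcal{L}^2)$, all sums being $\ell^2$-orthogonal.

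I expect the main obstacle to be Term $\mathrm I$: one must verify carefully that the convolution representation of $\Lambda^sT_fg-T_f\Lambda^sg$ genuinely trades "$s$ derivatives on $g$" for "$s-1$ derivatives on $g$ plus one derivative on $f$", with constants uniform in $j$ after the $2^{-j}$-rescaling of $k_j$, and --- for the second estimate --- that this kernel/maximal-function argument is stable under replacing scalar Lebesgue norms by $\mathcal{L}^2$-Bochner norms, which is exactly where Minkowski's inequality in $R$ enters. Everything else is routine Littlewood--Paley bookkeeping.
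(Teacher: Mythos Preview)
The paper does not prove this lemma at all: it is quoted from the literature (the citation \texttt{[Moser1966A]} is the sole justification), so there is no ``paper's own proof'' to compare against. Your paraproduct/Kato--Ponce argument is one of the standard modern routes to this commutator estimate and is essentially correct, including the observation that for the $L^2(\mathcal{L}^2)$ version the $R$-variable is passive and Minkowski lets one carry the Hilbert-space norm through the convolutions.

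One slip worth fixing: in your last sentence you set $p_1=p_3=\infty$ for the second inequality, but the statement reads $\|\Lambda^s f\|_{L^2}$, so the correct choice is $p_1=2$, $p_2=\infty$, $p_3=\infty$, $p_4=2$ (and indeed $1/p_1+1/p_2=1/2=1/p$ fails with your indices). This only affects the bookkeeping in Terms~II and~III, where the Bernstein/square-function step places $\Lambda^s f$ in $L^2$ rather than $L^\infty$; the mechanism is unchanged.
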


\section{Global strong solutions with small data}
In this section, we investigate the global well-posedness for the compressible FENE dumbbell model with $d\geq2$.
We divide the proof of Theorem \ref{th1} into two Propositions. Using the standard iterating method in \cite{2017Global} and \cite{2021Global}, one can easily deduce that the existence of local solutions. Thus we omit the proof here and present the following Proposition.
\begin{prop}\label{pro1}
Let $d\geq 2~and~s>1+\frac d 2$. If $E(0)\leq \frac {\epsilon} 2$, then there exist a time $T>0$ such that \eqref{eq1} admits a unique local strong solution $(\rho_,u,g)\in L^{\infty}(0,T;H^s\times H^s\times H^s(\mathcal{L}^2))$ and we get
\begin{align}
\sup_{t\in[0,T]} E(t)+\int_{0}^{T}H(t)dt\leq \epsilon,
\end{align}
where $H(t)=\mu\|\nabla u\|^2_{H^{s}}+(\mu+\mu')\|div~u\|^2_{H^{s}}+\|\nabla_R g\|^2_{H^{s}(\mathcal{L}^{2})}.$
\end{prop}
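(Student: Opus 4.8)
\textbf{Proof proposal for Proposition \ref{pro1}.}

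The plan is to construct the local solution by a standard iteration scheme and then close a uniform-in-$n$ energy estimate on a short time interval, so that the limit inherits the bound $\sup_{t\in[0,T]}E(t)+\int_0^T H(t)\,dt\le\epsilon$. First I would set up the approximating sequence $(\rho^{n+1},u^{n+1},g^{n+1})$ by solving, at each step, the linear system obtained from \eqref{eq1} by freezing the transport and coupling coefficients at the previous iterate: the transport equation for $\rho^{n+1}$ with velocity $u^n$, the (damped) linear parabolic equation for $u^{n+1}$ with the variable viscosity coefficient $\frac1{1+\rho^n}$ and right-hand side containing $\operatorname{div}\tau(g^n)$, and the Fokker--Planck equation for $g^{n+1}$ with the operator $\mathcal L$ plus lower-order terms built from $u^n$ and $g^n$. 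Each linear problem is solvable in $H^s$ (respectively $H^s(\mathcal L^2)$) by classical theory: the transport step uses the estimate $\|\rho^{n+1}(t)\|_{H^s}\le\|\rho_0\|_{H^s}\exp(C\int_0^t\|\nabla u^n\|_{H^{s-1}}\,ds)$ together with $s>1+d/2$ so that $H^{s-1}\hookrightarrow L^\infty$; the velocity step uses parabolic energy estimates after checking $1+\rho^n$ stays bounded below; and the $g$-equation uses the coercivity of $\mathcal L$ in $\mathcal L^2$ via Lemma \ref{Lemma1}.

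Next I would carry out the key a priori estimate on the nonlinear system itself (equivalently, uniformly in $n$). Applying $\Lambda^\sigma$ for $0\le\sigma\le s$ to each equation, pairing with $\Lambda^\sigma\rho$, $\Lambda^\sigma u$, and $\Lambda^\sigma g$ in $\mathcal L^2$ respectively, and summing, one exploits the cancellation between the $\operatorname{div}\tau$ term in the momentum equation and the $-\operatorname{div}u-\nabla u R\nabla_R\mathcal U$ terms in the $g$-equation (the same structure used in \cite{2021Global}); the estimate $|\tau(g)|^2\le\delta\|\nabla_R g\|_{\mathcal L^2}^2+C_\delta\|g\|_{\mathcal L^2}^2$ from Lemma \ref{Lemma2}, combined with Lemma \ref{Lemma1}, lets one absorb $\tau$ into the dissipation $\|\nabla_R g\|_{H^s(\mathcal L^2)}^2$. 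The commutator terms $[\Lambda^\sigma,u\cdot\nabla]$ and the variable-coefficient terms $[\Lambda^\sigma,\frac1{1+\rho}]\operatorname{div}\Sigma(u)$, $[\Lambda^\sigma,\frac1{\psi_\infty}]\nabla_R\cdot(\nabla u R g\psi_\infty)$ are controlled by Lemma \ref{Lemma4} and the Gagliardo--Nirenberg inequality Lemma \ref{Lemma0}, always producing a factor $E(t)^{1/2}$ in front of $D(t)$-type quantities. The outcome is a differential inequality of the form $\frac{d}{dt}E(t)+H(t)\le C\,E(t)^{1/2}\big(H(t)+\|\nabla\rho\|_{H^{s-1}}^2\big)+C\,E(t)^{1/2}H(t)$; on a sufficiently short time interval, as long as $E(t)\le\epsilon$ with $\epsilon$ small, the right-hand side is absorbed and one gets $\frac{d}{dt}E(t)\le C E(t)$, hence $E(t)\le E(0)e^{Ct}\le\frac{\epsilon}{2}e^{Ct}$, which stays below $\epsilon$ for $t\le T:=\frac1C\ln 2$, and integrating the inequality also controls $\int_0^T H(t)\,dt$.

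Finally I would pass to the limit: the uniform bounds give weak-$*$ convergence in $L^\infty(0,T;H^s\times H^s\times H^s(\mathcal L^2))$, and a contraction argument in the lower-norm space $H^{s-1}\times H^{s-1}\times H^{s-1}(\mathcal L^2)$ shows the sequence is Cauchy there, yielding a strong limit that solves \eqref{eq1}; uniqueness follows from the same contraction estimate, and the constraints $\int_B g\psi_\infty\,dR=0$ and $1+g>0$ are propagated (the former by integrating the $g$-equation over $B$ using the no-flux boundary condition, the latter by a maximum-principle / characteristics argument on $\psi=\psi_\infty(1+g)$). I expect the main obstacle to be the variable-coefficient and micro-macro coupling commutators — in particular handling $\frac1{\psi_\infty}\nabla_R\cdot(\nabla u R g\psi_\infty)$, which involves the unbounded weight $\nabla_R\mathcal U$ and must be paired carefully against $\nabla_R g$ so that the singular factor is integrated by parts onto the dissipative term rather than estimated directly; getting the right power of $E(t)^{1/2}$ there (so that smallness genuinely closes the estimate) is the delicate point.
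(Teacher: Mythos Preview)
Your proposal is correct and is precisely the ``standard iterating method'' the paper has in mind: the paper does not give a proof of Proposition~\ref{pro1} at all, but simply writes ``Using the standard iterating method in \cite{2017Global} and \cite{2021Global}, one can easily deduce that the existence of local solutions. Thus we omit the proof here.'' Your sketch --- linear iteration, uniform $H^s$ energy bounds closed by the micro--macro cancellation and the commutator Lemmas~\ref{Lemma0}--\ref{Lemma4}, contraction in a lower norm, and propagation of $\int_B g\psi_\infty dR=0$ and $1+g>0$ --- is exactly that method, so there is nothing to compare.
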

Denote that
$$E_\eta(t)=\sum_{n=0,s}(\|h(\rho)^{\frac 1 2}\Lambda^n\rho\|^2_{L^{2}}+\|(1+\rho)^{\frac 1 2}\Lambda^nu\|^2_{L^{2}})+\lambda\|g\|^2_{H^{s}(\mathcal{L}^{2})}+2\eta\sum_{m=0,s-1}\int_{\mathbb{R}^{d}} \Lambda^m u\nabla\Lambda^m \rho dx,$$
and
$$D_\eta(t)=\eta\gamma\|\nabla\rho\|^2_{H^{s-1}}+\mu\|\nabla u\|^2_{H^{s}}+(\mu+\mu')\|div~u\|^2_{H^{s}}+\lambda\|\nabla_R g\|^2_{H^{s}(\mathcal{L}^{2})}.$$
In the following proposition, we prove a key global priori estimate for \eqref{eq1}.
\begin{prop}\label{pro2}
Let $d\geq 2~and~s>1+\frac d 2$. Assume that $(\rho_,u,g)\in L^{\infty}(0,T;H^s\times H^s\times H^s(\mathcal{L}^2))$ are local strong solutions constructed in Proposition \ref{pro1}. If $\sup_{t\in[0,T)} E(t)\leq \epsilon$, then we have
\begin{align}
\frac d {dt} E_\eta(t)+D_\eta(t)\leq 0,
\end{align}
and there exist a constant $C_0>1$ such that
\begin{align}
\sup_{t\in[0,T]} E(t)+\int_{0}^{T}D(t)dt\leq C_{0}E(0).
\end{align}
\end{prop}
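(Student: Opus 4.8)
\textbf{Proof proposal for Proposition \ref{pro2}.}

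The plan is to run a carefully weighted energy estimate that exploits the cancellation between the stress term $div~\tau$ in the velocity equation and the forcing terms $-div~u - \nabla u R\nabla_R\mathcal{U}$ in the $g$-equation. First I would apply $\Lambda^n$ for $n=0$ and $n=s$ to the equations for $\rho$, $u$ and $g$ in \eqref{eq1}, multiply by $h(\rho)\Lambda^n\rho$, $(1+\rho)\Lambda^n u$ and $\psi_\infty\Lambda^n g$ respectively (with $h(\rho)=P'(1+\rho)/(1+\rho)$, the weight chosen so that the pressure and mass terms cancel), integrate in $x$ and $R$, and sum. The crucial structural fact is that $\tau_{ij}(g)=\int_B R_i\nabla_{Rj}\mathcal{U}\,g\psi_\infty\,dR$, so testing the $g$-equation against $\psi_\infty g$ produces, via integration by parts in $R$ against the term $-\nabla uR\nabla_R\mathcal{U}$, exactly the quantity $-\int \nabla u : \tau(g)\,dx$ which cancels the $\int (1+\rho)^{-1}div~\tau \cdot (1+\rho)u = \int div~\tau\cdot u$ contribution from the momentum equation (up to terms carrying a factor $\rho$, treated as remainders). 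The dissipation $\|\nabla_R g\|^2_{H^s(\mathcal{L}^2)}$ comes from $\mathcal{L}g$ via $\int \psi_\infty|\nabla_R \Lambda^n g|^2$, while $\mu\|\nabla u\|^2_{H^s}+(\mu+\mu')\|div~u\|^2_{H^s}$ comes from $div~\Sigma(u)$.

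Next, to recover dissipation for $\rho$ (which the energy alone does not see, the mass equation being hyperbolic), I would add the cross term $2\eta\sum_{m=0,s-1}\int \Lambda^m u\cdot\nabla\Lambda^m\rho\,dx$ with a small parameter $\eta>0$. Differentiating it in time and using the equations, the dominant contribution is $-\eta\gamma\|\nabla\rho\|^2_{H^{s-1}}$ (from the pressure gradient in the $u$-equation paired with $\nabla\rho$) plus $+\eta\|div~u\|^2$-type terms controlled by the already-available velocity dissipation, plus genuinely higher-order remainders. Choosing $\eta$ small enough that the $\eta\|div~u\|^2$ pieces are absorbed into $\mu\|\nabla u\|^2_{H^s}$, and noting $E_\eta(t)$ is equivalent to $E(t)$ and $D_\eta(t)$ equivalent to $D(t)$ for $\eta$ small (using $\|\rho\|_{L^\infty}\lesssim\sqrt{\epsilon}$ small so that $h(\rho),1+\rho\sim 1$), one gets $\frac{d}{dt}E_\eta(t)+D_\eta(t)\le R(t)$, where $R(t)$ collects all nonlinear terms.

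The main obstacle is showing $R(t)\le 0$, i.e. that every nonlinear remainder is absorbed by $D_\eta(t)$ under the smallness hypothesis $\sup_{[0,T)}E(t)\le\epsilon$. These remainders are of three types: transport terms $u\cdot\nabla(\cdot)$, terms with a prefactor $\rho$ multiplying a dissipative quantity (e.g. $\rho\, div~\Sigma(u)$, $\rho\,\nabla\rho$, $\rho\,div~\tau$), and the drag-type nonlinearity $\frac{1}{\psi_\infty}\nabla_R\cdot(\nabla u R g\psi_\infty)$. I would handle the top-order pieces with the commutator estimates of Lemma \ref{Lemma4}, the product and interpolation inequalities of Lemma \ref{Lemma0} (with $s>1+\frac d2$ giving $H^{s-1}\hookrightarrow L^\infty$), and for the $\tau$-terms the refined bounds of Lemma \ref{Lemma2}, Lemma \ref{Lemma3} and Corollary \ref{rema} to trade a power of $\|g\|_{\mathcal{L}^2}$ (bounded by $\sqrt\epsilon$) for the dissipative $\|\nabla_R g\|_{\mathcal{L}^2}$; each such term then carries an extra factor $\sqrt{E(t)}\le\sqrt\epsilon$, so choosing $\epsilon$ (hence $\epsilon_0$) small makes it $\le\frac12 D_\eta(t)$. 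This yields $\frac{d}{dt}E_\eta(t)+\frac12 D_\eta(t)\le 0$, and after rescaling constants $\frac{d}{dt}E_\eta(t)+D_\eta(t)\le0$. Finally, integrating in time and using the equivalence $C_0^{-1}E(t)\le E_\eta(t)\le C_0 E(t)$ gives $\sup_{[0,T]}E(t)+\int_0^T D(t)\,dt\le C_0 E(0)$, which is the claimed estimate; combined with Proposition \ref{pro1} and a continuation argument this closes the bootstrap and proves Theorem \ref{th1}.
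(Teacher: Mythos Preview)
Your proposal is correct and follows essentially the same strategy as the paper: the same weighted multipliers $h(\rho)\Lambda^n\rho$, $(1+\rho)\Lambda^n u$, $\psi_\infty\Lambda^n g$, the same key cancellation between $\int div\,\tau\cdot\Lambda^n u$ and the term arising from $-\nabla\Lambda^n u\,R\nabla_R\mathcal{U}$ in the $g$-equation, the same cross term $2\eta\sum_{m=0,s-1}\int\Lambda^m u\cdot\nabla\Lambda^m\rho$ to recover $\|\nabla\rho\|^2_{H^{s-1}}$-dissipation, and the same closure via Lemmas~\ref{Lemma0}, \ref{Lemma1}, \ref{Lemma2}, \ref{Lemma4} with the smallness $E(t)\le\epsilon$. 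A minor remark: for this proposition the paper uses only Lemmas~\ref{Lemma1}--\ref{Lemma2} on the $\tau$-terms (the refined bounds of Lemma~\ref{Lemma3} and Corollary~\ref{rema} enter only in the decay analysis of Section~4), and the passage ``after rescaling constants'' is unnecessary since the factor $2$ in front of the dissipative terms in the low- and high-order identities already absorbs the losses directly.
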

\begin{proof}
By virtue of the coupling effect between $(\rho, u, g)$ and using the energy methods, we can easily deduce that the lower order derivatives estimates for $\eqref{eq1}$, see \cite{2021Global}.

Multiplying $\psi_\infty$ to $\eqref{eq1}_3$ and integrating over $B$ with $R$, we obtain $\int_{B}g\psi_\infty dR=\int_{B}g_0\psi_\infty dR=0$.
$L^2(\mathcal{L}^{2})$ inner product is denoted by $\langle f,g\rangle=\int_{\mathbb{R}^{d}}\int_{B}fg\psi_\infty dRdx$. Since $u$ is independent on $R$, it follows that $\langle div~u,g\rangle=0$. Taking the $L^2(\mathcal{L}^{2})$ inner product with $g$ to $\eqref{eq1}_3$, we obtain
\begin{align}
\frac {1} {2}\frac {d} {dt} \|g\|^2_{L^2(\mathcal{L}^{2})}+\|\nabla_R g\|^2_{L^2(\mathcal{L}^{2})}-\int_{\mathbb{R}^{d}}\nabla u:\tau dx  =-\langle u\cdot\nabla g, g\rangle-\langle\frac 1 {\psi_\infty} \nabla_R\cdot(\nabla uRg\psi_\infty), g \rangle.
\end{align}
Integrating by parts, we have
\begin{align*}
-\langle u\cdot\nabla g,g\rangle=\frac 1 2 \langle divu, g^2\rangle
\lesssim \|\nabla u\|_{L^\infty}\|g\|^2_{L^2(\mathcal{L}^{2})},
\end{align*}
and
\begin{align*}
\langle\frac 1 {\psi_\infty} \nabla_R\cdot(\nabla uRg\psi_\infty), g \rangle=
-\int_{\mathbb{R}^{d}}\int_{B}(\nabla uRg\psi_\infty)\nabla_R g dRdx
\lesssim \|\nabla u\|_{L^\infty}\|g\|_{L^2(\mathcal{L}^{2})}\|\nabla_R g\|_{L^2(\mathcal{L}^{2})}.
\end{align*}
Applying Lemma \ref{Lemma1}, we have $\|g\|^2_{L^2(\mathcal{L}^{2})}\lesssim\|\nabla_R g\|^2_{L^2(\mathcal{L}^{2})}$, which implies that
\begin{align}\label{co1}
\frac {1} {2}\frac {d} {dt} \|g\|^2_{L^2(\mathcal{L}^{2})}+\|\nabla_R g\|^2_{L^2(\mathcal{L}^{2})}+\int_{\mathbb{R}^{d}}\nabla u:\tau dx\lesssim \|\nabla u\|_{L^\infty}\|g\|_{L^2(\mathcal{L}^{2})}\|\nabla_R g\|_{L^2(\mathcal{L}^{2})}.
\end{align}
Denote that $h(\rho)=\frac {P'(1+\rho)} {1+\rho}$ and $i(\rho)=\frac 1 {\rho+1}$. Multiplying $h(\rho) \rho$ to $\eqref{eq1}_1$ and integrating over $\mathbb{R}^{d}$ with $x$, we obtain
\begin{align}\label{co2}
&\frac 1 2 \frac {d} {dt} \int_{\mathbb{R}^{d}}h(\rho)|\rho|^2 dx+\int_{\mathbb{R}^{d}}P'(1+\rho)\rho div~u dx \\ \notag
&=\frac 1 2 \int_{\mathbb{R}^{d}} \partial_t h(\rho) |\rho|^2 dx-\int_{\mathbb{R}^{d}}h(\rho)\rho u\cdot\nabla \rho dx.
\end{align}
Multiplying $(1+\rho)u$ to $(\ref{eq1})_2$ and integrating over $\mathbb{R}^{d}$ with $x$, we deduce that
\begin{align}\label{co3}
&\frac 1 2 \frac {d} {dt} \int_{\mathbb{R}^{d}}(1+\rho)|u|^2 dx+\int_{\mathbb{R}^{d}}P'(1+\rho)u\nabla\rho dx-\int_{\mathbb{R}^{d}}udiv\Sigma(u)dx-\int_{\mathbb{R}^{d}}udiv~\tau dx  \\ \notag
&=\frac 1 2 \int_{\mathbb{R}^{d}} \partial_t\rho|u|^2 dx-\int_{\mathbb{R}^{d}}u\cdot\nabla u (1+\rho)u dx.
\end{align}
Using Lemma \ref{Lemma0}, we get
\begin{align*}
\begin{split}
&\frac 1 2 \int_{\mathbb{R}^{d}} \partial_t h(\rho) |\rho|^2 dx
\lesssim \|\nabla\rho\|_{L^2}(\|\nabla \rho\|_{L^2}\|u\|_{L^d}+\|\nabla u\|_{L^2}\|\rho\|_{L^d}), \\
&\int_{\mathbb{R}^{d}}h(\rho)\rho u\cdot\nabla \rho dx
\lesssim \|\nabla\rho\|_{L^2}(\|\nabla u\|_{L^2}\|\rho\|_{L^d}+\|\nabla \rho\|_{L^2}\|u\|_{L^d}),  \\
&\frac 1 2 \int_{\mathbb{R}^{d}} \partial_t\rho|u|^2 dx+\int_{\mathbb{R}^{d}}u\cdot\nabla u (1+\rho)u dx
\lesssim \|\nabla\rho\|_{L^2}\|\nabla u\|_{L^2}\|u\|_{L^d}+\|\nabla u\|^2_{L^2}\|u\|_{L^d}.
\end{split}
\end{align*}
Integrating by parts, we obtain
\begin{align*}
-\int_{\mathbb{R}^{d}}P'(1+\rho)(u\nabla\rho+\rho div~u)dx
=\int_{\mathbb{R}^{d}}P''(1+\rho)\rho u\nabla\rho dx\lesssim \|\nabla \rho\|_{L^2}\|\nabla u\|_{L^2}\|\rho\|_{L^d}+\|\nabla \rho\|^2_{L^2}\|u\|_{L^d}.
\end{align*}
Multiplying $\nabla\rho$ to $(\ref{eq1})_2$ and integrating over $\mathbb{R}^{d}$ with $x$, then we get
\begin{align}\label{co4}
&\frac {d} {dt} \int_{\mathbb{R}^{d}} u\nabla\rho dx+\gamma\|\nabla\rho\|^2_{L^2}  \\ \notag
&= \int_{\mathbb{R}^{d}}u\nabla\rho_t dx
+\int_{\mathbb{R}^{d}}\nabla\rho\cdot\{i(\rho) div\Sigma{(u)}-(h(\rho)-\gamma) \nabla\rho-u\cdot\nabla u+i(\rho) div~\tau\} dx  \\ \notag
&=I_1+I_2.
\end{align}
By virtue of integration by parts, we have
\begin{align*}
I_1=-\int_{\mathbb{R}^{d}}div~u\rho_t dx
\lesssim \|\nabla u\|^2_{L^2}(1+\|\rho\|_{L^\infty})+\|\nabla u\|_{L^2}\|\nabla \rho\|_{L^2}\|u\|_{L^\infty}.
\end{align*}
Applying Lemma \ref{Lemma1} and Lemma \ref{Lemma2}, we obtain
\begin{align*}
I_2\lesssim \|\nabla \rho\|_{L^2}(\|\nabla^2 u\|_{L^2}+\|\rho\|_{L^\infty}\|\nabla \rho\|_{L^2}+\|u\|_{L^\infty}\|\nabla u\|_{L^2}+\|\nabla\nabla_R g\|_{L^2(\mathcal{L}^{2})}).
\end{align*}

Let $\eta<1$, which will be chosen later. Combining \eqref{co1} and the estimates for \eqref{co2}-\eqref{co4}, we obtain the lower order derivatives estimates for $\eqref{eq1}$:
\begin{align}\label{low estimate}
&\frac {d} {dt} (\|h(\rho)^{\frac 1 2}\rho \|^2_{L^2}+\|(1+\rho)^{\frac 1 2}u\|^2_{L^2}+\|g\|^2_{L^2(\mathcal{L}^{2})}+2\eta\int_{\mathbb{R}^{d}} u\nabla\rho dx)  \\ \notag
&+2(\mu\|\nabla u\|^2_{L^2}+(\mu+\mu')\|divu\|^2_{L^2}+
\eta\gamma\|\nabla\rho\|^2_{L^2}+\|\nabla_R g\|^2_{L^2(\mathcal{L}^{2})})  \\ \notag
&\lesssim (\|\nabla \rho\|^2_{L^2}+\|\nabla u\|^2_{L^2})(\|u\|_{L^d}+\|\rho\|_{L^d})+\|\nabla u\|_{L^\infty}\|\nabla_R g\|^2_{L^2(\mathcal{L}^{2})}+\eta\|\nabla u\|^2_{L^2}(1+\|\rho\|_{L^\infty})  \\ \notag
&+\eta\|\nabla \rho\|_{L^2}(\|\nabla^2 u\|_{L^2}+\|\rho\|_{L^\infty}\|\nabla \rho\|_{L^2}+\|u\|_{L^\infty}\|\nabla u\|_{L^2}+\|\nabla\nabla_R g\|_{L^2(\mathcal{L}^{2})}).
\end{align}

From now on, we establish a precise estimate of the higher order derivatives for \eqref{eq1} by interpolation theory. The estimate will play a important role in improving the time decay rate.\\
Applying $\Lambda^s$ to $(\ref{eq1})_3$, we deduce that
\begin{align}\label{h3}
&\partial_t\Lambda^s g+\mathcal{L}\Lambda^s g+div\Lambda^{s}u+\nabla\Lambda^{s}uR\nabla_R \mathcal{U} \\ \notag
&=-u\cdot\nabla\Lambda^s g-[\Lambda^s,u]\nabla g-\frac 1 {\psi_\infty}\nabla_R \cdot(\Lambda^s\nabla uRg\psi_\infty+R\psi_\infty[\Lambda^s,g]\nabla u).
\end{align}
Taking the $L^2(\mathcal{L}^{2})$ inner product with $\Lambda^s g $ to $(\ref{h3})$, we obtain
\begin{align}
&\frac {1} {2}\frac {d} {dt} \|\Lambda^s g\|^2_{L^2(\mathcal{L}^{2})}+\|\nabla_R  \Lambda^s g\|^2_{L^2(\mathcal{L}^{2})}-\int_{\mathbb{R}^{d}}\nabla\Lambda^{s}u:\Lambda^{s}\tau dx
=-\langle u\cdot\nabla \Lambda^s g,\Lambda^s g\rangle  \\ \notag
&-\langle[\Lambda^s,u]\nabla g,\Lambda^s g\rangle
-\langle\frac 1 {\psi_\infty} \nabla_R\cdot(\Lambda^s\nabla uRg\psi_\infty),\Lambda^sg\rangle
-\langle\frac 1 {\psi_\infty} \nabla_R\cdot(R\psi_\infty[\Lambda^s,g]\nabla u),\Lambda^s g\rangle.
\end{align}
Integrating by part and using Lemma \ref{Lemma4}, we have
\begin{align*}
\begin{split}
-\langle u\cdot\nabla \Lambda^s g,\Lambda^s g\rangle&=\frac 1 2 \langle div~u, (\Lambda^s g)^2\rangle
\lesssim \|\nabla u\|_{L^\infty}\|\Lambda^s g\|^2_{L^2(\mathcal{L}^{2})},\\
-\langle[\Lambda^s,u]\nabla g,\Lambda^s g\rangle&\lesssim \|u\|_{H^s}\|\nabla g\|^2_{H^{s-1}(\mathcal{L}^{2})},\\
\langle\frac 1 {\psi_\infty} \nabla_R\cdot(\Lambda^s\nabla uRg\psi_\infty), \Lambda^sg \rangle
&=-\int_{\mathbb{R}^{d}}\int_{B}(\Lambda^s\nabla uR\psi_\infty g)\nabla_R \Lambda^s g dRdx, \\
&\lesssim \|g\|_{L^\infty(\mathcal{L}^{2})}\|\nabla \Lambda^s u\|_{L^2}\|\nabla_R\Lambda^s g\|_{L^2(\mathcal{L}^{2})},\\
-\langle\frac 1 {\psi_\infty} \nabla_R\cdot(R\psi_\infty[\Lambda^s,g]\nabla u),\Lambda^s g\rangle&=\langle R[\Lambda^s,g]\nabla u,\nabla_R\Lambda^s g\rangle  \\
&\lesssim \|\nabla_R \Lambda^s g\|_{L^2(\mathcal{L}^{2})}\|u\|_{H^s}\|\nabla g\|_{H^{s-1}(\mathcal{L}^{2})},
\end{split}
\end{align*}
from which we can deduce that
\begin{multline}\label{co5}
\frac {1} {2}\frac {d} {dt} \|\Lambda^s g\|^2_{L^2(\mathcal{L}^{2})}+\|\nabla_R \Lambda^s g\|^2_{L^2(\mathcal{L}^{2})}-\int_{\mathbb{R}^{d}}\nabla\Lambda^{s}u:\Lambda^{s}\tau dx \\
\lesssim \|g\|_{L^\infty(\mathcal{L}^{2})}\|\nabla \Lambda^s u\|_{L^2}\|\nabla_R\Lambda^s g\|_{L^2(\mathcal{L}^{2})}+\|u\|_{H^s}\|\nabla_R\nabla g\|^2_{H^{s-1}(\mathcal{L}^{2})}.
\end{multline}
Applying $\Lambda^s$ to $(\ref{eq1})_1$ and applying $\Lambda^m$ to $(\ref{eq1})_2$, we get
\begin{align}\label{h1}
\partial_t\Lambda^s \rho+div \Lambda^s u(1+\rho)
=-u\cdot\nabla\Lambda^s \rho-[\Lambda^s,u]\nabla\rho-[\Lambda^s,\rho]div~u,
\end{align}
and
\begin{align}\label{h2}
&\partial_t\Lambda^m u+h(\rho)\nabla\Lambda^m\rho-i(\rho) div\Lambda^m \Sigma{(u)}-i(\rho) div\Lambda^m \tau  \\ \notag
&=-u\cdot\nabla\Lambda^m u-[\Lambda^m,u]\nabla u-[\Lambda^m,h(\rho)   -\gamma]\nabla\rho+[\Lambda^m,i(\rho)-1]div\Sigma{(u)}+[\Lambda^m,i(\rho)-1]div~\tau.
\end{align}
Multiplying $h(\rho) \Lambda^s \rho$ to $(\ref{h1})$ and integrating over $\mathbb{R}^{d}$ with $x$, we obtain
\begin{align}\label{co6}
&\frac 1 2 \frac {d} {dt} \int_{\mathbb{R}^{d}}h(\rho)|\Lambda^s \rho|^2 dx+\int_{\mathbb{R}^{d}}P'(1+\rho)\Lambda^s \rho div\Lambda^s u dx =\frac 1 2
\int_{\mathbb{R}^{d}} \partial_t h(\rho) |\Lambda^s \rho|^2 dx  \\ \notag
&-\int_{\mathbb{R}^{d}}\Lambda^s \rho \cdot h(\rho)  u\cdot\nabla \Lambda^s \rho dx-\int_{\mathbb{R}^{d}}[\Lambda^s,u]\nabla\rho\cdot h(\rho) \Lambda^s\rho dx-\int_{\mathbb{R}^{d}}[\Lambda^s,(1+\rho)]div~u\cdot h(\rho) \Lambda^s \rho dx.
\end{align}
H\"{o}lder's inequality yields that
\begin{align*}
\frac 1 2
\int_{\mathbb{R}^{d}} \partial_t h(\rho) |\Lambda^s \rho|^2 dx
\lesssim (\|u\|_{H^s}+\|\rho\|_{H^s})\|\Lambda^s \rho\|^2_{L^2}.
\end{align*}
By virtue of integration by parts, we get
\begin{align*}
-\int_{\mathbb{R}^{d}}\Lambda^s \rho \cdot h(\rho)  u\cdot\nabla \Lambda^s \rho dx
=\frac 1 2 \int_{\mathbb{R}^{d}} div(h(\rho)u)|\Lambda^s \rho|^2 dx
\lesssim (\|u\|_{H^s}+\|\rho\|_{H^s})\|\Lambda^s \rho\|^2_{L^2}.
\end{align*}
By Lemma \ref{Lemma4}, we obtain
\begin{align*}
&-\int_{\mathbb{R}^{d}}[\Lambda^s,u]\nabla\rho\cdot h(\rho) \Lambda^s\rho dx-\int_{\mathbb{R}^{d}}[\Lambda^s,\rho]divu\cdot h(\rho) \Lambda^s \rho dx  \\
&\lesssim (\|\Lambda^{s}u\|_{L^2}\|\nabla\rho\|_{L^\infty}+\|\nabla u\|_{L^\infty}\|\Lambda^{s}\rho\|_{L^2})\|\Lambda^s \rho\|_{L^2} \\
&\lesssim (\|\Lambda^{s}u\|_{L^2}\|\rho\|_{H^s}+\|u\|_{H^s}\|\Lambda^{s} \rho\|_{L^2})\|\Lambda^s \rho\|_{L^2}.
\end{align*}
Multiplying $(1+\rho) \Lambda^s u$ to $(\ref{h2})$ with $m=s$ and integrating over $\mathbb{R}^{d}$ with $x$, then we have
\begin{align}\label{co7}
&\frac 1 2 \frac {d} {dt} \|(1+\rho)^{\frac 1 2}\Lambda^s u\|^2_{L^2}+\int_{\mathbb{R}^{d}}P'(1+\rho)\nabla\Lambda^s \rho \Lambda^s u dx  \\ \notag
&+\mu\|\nabla\Lambda^s u\|^2_{L^2}+(\mu+\mu')\|div\Lambda^s u\|^2_{L^2}-\int_{\mathbb{R}^{d}}div\Lambda^s \tau \Lambda^s u dx  \\ \notag
&=\frac 1 2 \int_{\mathbb{R}^{d}} \partial_t\rho |\Lambda^s u|^2 dx
-\int_{\mathbb{R}^{d}}\Lambda^s u \cdot(1+\rho)u\cdot\nabla \Lambda^s u dx  \\ \notag
&-\int_{\mathbb{R}^{d}}[\Lambda^s,u]\nabla u (1+\rho)\Lambda^s u dx
-\int_{\mathbb{R}^{d}}[\Lambda^s,h(\rho)-\gamma]\nabla\rho (1+\rho)\Lambda^s u dx  \\ \notag
&+\int_{\mathbb{R}^{d}}[\Lambda^s,i(\rho)-1]div\Sigma{(u)} (1+\rho)\Lambda^s u dx
+\int_{\mathbb{R}^{d}}[\Lambda^s,i(\rho)-1]div~\tau (1+\rho)\Lambda^s u dx.
\end{align}
By virtue of Lemmas \ref{Lemma1}, \ref{Lemma2} and \ref{Lemma4}, we obtain
\begin{align*}
\begin{split}
&\frac 1 2 \int_{\mathbb{R}^{d}} \partial_t\rho |\Lambda^s u|^2 dx
-\int_{\mathbb{R}^{d}}\Lambda^s u \cdot(1+\rho)u\cdot\nabla \Lambda^s u dx
-\int_{\mathbb{R}^{d}}[\Lambda^s,u]\nabla u (1+\rho)\Lambda^s u dx  \\ \notag
&\lesssim \|u\|_{H^s}\|\Lambda^s u\|_{L^2}(\|\Lambda^s u\|_{L^2}+\|\nabla\Lambda^s u\|_{L^2}),  \\
&-\int_{\mathbb{R}^{d}}[\Lambda^s,h(\rho)-\gamma]\nabla\rho (1+\rho)\Lambda^s u dx
+\int_{\mathbb{R}^{d}}[\Lambda^s,i(\rho)-1]div\Sigma{(u)} (1+\rho)\Lambda^s u dx  \\ \notag
&\lesssim\|\rho\|_{H^s}\|\Lambda^s u\|_{L^2}(\|\Lambda^{s} \rho\|_{L^2}+\|\nabla^2 u\|_{H^{s-1}}),  \\
&\int_{\mathbb{R}^{d}}[\Lambda^s,i(\rho)-1]div~\tau (1+\rho)\Lambda^s u dx\lesssim\|\rho\|_{H^s}\|\Lambda^s u\|_{L^2}\|\nabla_R \nabla g\|_{H^{s-1}(\mathcal{L}^{2})}.
\end{split}
\end{align*}
Integrating by part, we have
\begin{align*}
-\int_{\mathbb{R}^{d}}P'(1+\rho)(\Lambda^s u\nabla\Lambda^s\rho+\Lambda^s\rho div\Lambda^su)dx
&=\int_{\mathbb{R}^{d}}P''(1+\rho)\Lambda^s\rho \Lambda^s u\nabla\rho dx  \\
&\lesssim \|\rho\|_{H^s}\|\Lambda^s u\|_{L^2}\|\Lambda^s \rho\|_{L^2}.
\end{align*}
Multiplying $\nabla\Lambda^{s-1} \rho$ to $(\ref{h2})$ with $m=s-1$ and integrating over $\mathbb{R}^{d}$ with $x$, then we get
\begin{align}\label{co8}
&\frac {d} {dt} \int_{\mathbb{R}^{d}}\Lambda^{s-1} u \cdot\nabla\Lambda^{s-1} \rho dx
+\gamma\|\nabla\Lambda^{s-1}\rho\|^2_{L^{2}}  \\ \notag
&=-\int_{\mathbb{R}^{d}} \Lambda^{s-1} \rho_t div\Lambda^{s-1} u dx-\int_{\mathbb{R}^{d}}\nabla\Lambda^{s-1} \rho\cdot u\cdot\nabla \Lambda^{s-1} u dx \\ \notag
&-\int_{\mathbb{R}^{d}}[\Lambda^{s-1},u]\nabla u \nabla\Lambda^{s-1} \rho dx-\int_{\mathbb{R}^{d}}\Lambda^{s-1}((h(\rho)-\gamma)\nabla\rho) \nabla\Lambda^{s-1} \rho dx \\ \notag
&+\int_{\mathbb{R}^{d}}\Lambda^{s-1}(i(\rho)div\Sigma{(u)}) \nabla\Lambda^{s-1} \rho dx
+\int_{\mathbb{R}^{d}}\Lambda^{s-1}(i(\rho)div\tau) \nabla\Lambda^{s-1} \rho dx.
\end{align}
Using Lemma \ref{Lemma4} and Lemma \ref{Lemma0}, we can deduce that
\begin{align*}
-\int_{\mathbb{R}^{d}}[\Lambda^{s-1},u]\nabla u \nabla\Lambda^{s-1} \rho dx&\lesssim \|\Lambda^{s} \rho\|_{L^2}\|\nabla u\|_{L^{\infty}}\|\Lambda^{s-1} u\|_{L^2}\\
&\lesssim \|\Lambda^{s} \rho\|_{L^2}\|u\|^{1-\frac 1 s-\frac d {2s}}_{L^{2}}\|\Lambda^{s} u\|^{\frac 1 s+\frac d {2s}}_{L^2}\|u\|^{\frac 1 s}_{L^{2}}\|\Lambda^{s} u\|^{1-\frac 1 s}_{L^2}\\
&\lesssim \|\Lambda^{s} \rho\|_{L^2}\|u\|_{H^{s}}\|\Lambda^{s} u\|_{L^2},
\end{align*}
and
\begin{align*}
&-\int_{\mathbb{R}^{d}} \Lambda^{s-1} \rho_t div\Lambda^{s-1} u dx
-\int_{\mathbb{R}^{d}}\nabla\Lambda^{s-1} \rho\cdot u\cdot\nabla \Lambda^{s-1} u dx \\
&\lesssim \|u\|_{H^s}\|\Lambda^{s} u\|_{L^2}\|\Lambda^{s} \rho\|_{L^2}+\|\Lambda^{s} u\|^2_{L^2}(1+\|\rho\|_{L^{\infty}})\\
&+\|\Lambda^{s} u\|_{L^2}(\|\nabla\rho\|_{L^{\infty}}\|\Lambda^{s-1} u\|_{L^2}+\|\nabla u\|_{L^{\infty}}\|\Lambda^{s-1}\rho\|_{L^2})  \\
&\lesssim \|\Lambda^{s} u\|^2_{L^2}+\|\Lambda^{s} u\|_{L^2}(\|\Lambda^{s} u\|_{L^2}+\|\Lambda^{s}\rho\|_{L^2})(\|\rho\|_{H^s}+\|u\|_{H^s}).
\end{align*}
Similar, we get
\begin{align*}
&-\int_{\mathbb{R}^{d}}\Lambda^{s-1}((h(\rho)-\gamma)\nabla\rho) \nabla\Lambda^{s-1} \rho dx +\int_{\mathbb{R}^{d}}\Lambda^{s-1}(i(\rho)div\Sigma{(u)}) \nabla\Lambda^{s-1} \rho dx\\
&\lesssim \|\Lambda^{s} \rho\|_{L^2}(\|\rho\|_{L^{\infty}}\|\Lambda^{s} \rho\|_{L^2}+\|\nabla\rho\|_{L^{\infty}}\|\Lambda^{s-1} \rho\|_{L^2}+\|\nabla^2 u\|_{H^{s-1}}+\|\nabla^2 u\|_{H^{s-1}}\|\rho\|_{H^{s-1}})\\
&\lesssim \|\Lambda^{s} \rho\|_{L^2}(\|\rho\|_{H^{s}}\|\Lambda^{s} \rho\|_{L^2}+\|\nabla^2 u\|_{H^{s-1}}+\|\nabla^2 u\|_{H^{s-1}}\|\rho\|_{H^{s-1}}).
\end{align*}
Using Lemma \ref{Lemma1} and Lemma \ref{Lemma2}, we have
\begin{align*}
\int_{\mathbb{R}^{d}}\Lambda^{s-1}(i(\rho)div\tau) \nabla\Lambda^{s-1} \rho dx\lesssim \|\Lambda^{s} \rho\|_{L^2}\|\nabla_R \nabla g\|_{H^{s-1}(\mathcal{L}^{2})}(\|\rho\|_{H^{s-1}}+1).
\end{align*}

Combining \eqref{co5} and the estimates for \eqref{co6}-\eqref{co8}, we obtain the higher order derivatives estimates for $\eqref{eq1}$:
\begin{align}\label{high estimate}
&\frac {d} {dt} (\|h(\rho)^{\frac 1 2}\Lambda^s\rho \|^2_{L^2}+\|(1+\rho)^{\frac 1 2}\Lambda^s u\|^2_{L^2}+\|\Lambda^s g\|^2_{L^2(\mathcal{L}^{2})}+2\eta\int_{\mathbb{R}^{d}} \Lambda^{s-1} u\nabla\Lambda^{s-1} \rho dx)  \\ \notag
&+2(\mu\|\nabla \Lambda^s u\|^2_{L^2}+(\mu+\mu')\|div\Lambda^s u\|^2_{L^2}+
\eta\gamma\|\nabla\Lambda^{s-1}\rho\|^2_{L^2}+\|\nabla_R \Lambda^s g\|^2_{L^2(\mathcal{L}^{2})})  \\ \notag
&\lesssim \|\rho\|_{H^s}\|\nabla^2 u\|_{H^{s-1}}\|\nabla\nabla_R g\|_{H^{s-1}(\mathcal{L}^{2})}+(\|\rho\|_{H^{s}}+\|u\|_{H^{s}})(\|\nabla^2 \rho\|^2_{H^{s-2}}+\|\nabla^2 u\|^2_{H^{s-1}})  \\ \notag
&+\|g\|_{L^\infty(\mathcal{L}^{2})}\|\nabla \Lambda^s u\|_{L^2}\|\nabla_R\Lambda^s g\|_{L^2(\mathcal{L}^{2})}+\|u\|_{H^s}\|\nabla_R\nabla g\|^2_{H^{s-1}(\mathcal{L}^{2})} \\ \notag
&+\eta(\|\nabla^2 u\|^2_{H^{s-1}}+\|\nabla^2 u\|_{H^{s-1}}\|\nabla^2 \rho\|_{H^{s-2}})  +\eta(\|\rho\|_{H^s}+\|u\|_{H^s})(\|\nabla^2 \rho\|^2_{H^{s-2}}+\|\nabla^2 u\|^2_{H^{s-1}})\\ \notag
&+\eta\|\nabla^2 \rho\|_{H^{s-2}}\|\nabla\nabla_R g\|_{H^{s-1}(\mathcal{L}^{2})}(1+\|\rho\|_{H^{s}}).
\end{align}
Note that the precise estimate \eqref{high estimate} will be used in the next section again.

Choosing sufficiently small constant $\eta>0$, we have $E(t)\sim E_\eta(t)$ and $D(t)\sim D_\eta(t)$.
Then combining \eqref{low estimate} and \eqref{high estimate} with sufficiently small $\eta>0$ and $\epsilon$, we finally deduce that
$$\frac d {dt} E_\eta(t)+D_\eta(t)\leq 0,$$
which implies that there exist $C_0>1$ such that
\begin{align*}
\sup_{t\in[0,T]} E(t)+\int_{0}^{T}D(t)dt\leq C_{0}E(0).
\end{align*}
We thus complete the proof of Proposition \ref{pro2}.
\end{proof}

{\bf The proof of Theorem \ref{th1}:}

Applying Proposition \ref{pro1} and Proposition \ref{pro2}, we prove the global-in-time solutions of the compressible polymeric system \eqref{eq1} by using the standard continuum argument. Let $E(0)\leq \epsilon_0$ with $\epsilon_0=\frac {\epsilon} {2C_0}$ and $C_0>1$. Applying Proposition \ref{pro1}, we get the unique local solution result on the time interval $t\in[0,T]$ with $T>0$, satisfying $\sup_{t\in[0,T]} E(t)\leq \epsilon$. Then the global priori estimate in Proposition \ref{pro2} yields
$$E(T)\leq C_0 E(0)\leq \frac {\epsilon} {2}. $$
Applying Proposition \ref{pro1} again, we have the unique local solution result $t\in[T,2T]$, satisfying $\sup_{t\in[T,2T]} E(t)\leq \epsilon$. So it holds that
$$\sup_{t\in[0,2T]} E(t)\leq \epsilon.$$
Then the global priori estimate in Proposition \ref{pro2} yields
$$E(2T)\leq C_0 E(0)\leq \frac {\epsilon} {2}. $$
Repeating this bootstrap argument, we prove the global existence of strong solution of the compressible polymeric system \eqref{eq1}. Moreover, we obtain $\sup_{t\in[0,\infty)} E(t)+\int_{0}^{\infty}D(t)dt\leq C_{0}E(0).$
\hfill$\Box$

\section{The $L^2$ decay rate}
We investigate the long time behaviour for the compressible FENE dumbbell model in this section. The first difficult for us is that the stress tensor $\tau$ does not decay fast enough. Therefore, we failed to use the bootstrap argument as in \cite{Schonbek1985,Luo-Yin,2021Global}. To overcome this difficulty, we need to consider the coupling effect between $\rho$, $u$ and $g$ in Fourier space. Motivated by \cite{He2009} and \cite{2018Global}, we obtain the $L^2$ decay rate by taking Fourier transform in \eqref{eq1} and using the Fourier splitting method. By virtue of the standard method, one can not obtain the optimal decay rate. However, we can obtain a weaker result as follow.
\begin{prop}\label{pro3}
Let $(\rho_0,u_0,g_0)$ satisfy the same condition in Theorem \ref{th2}. For any $t\in(0,+\infty)$, we have
\begin{align}
E(t) \leq C(1+t)^{-\frac d 4}.
\end{align}
\end{prop}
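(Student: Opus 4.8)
The plan is to feed the differential energy inequality of Proposition \ref{pro2} into Schonbek's Fourier splitting scheme, the new ingredient being a frequency–localized analysis of \eqref{eq1} that \emph{removes} the stress tensor $\tau$ instead of estimating it. Recall from Proposition \ref{pro2} (and its refinement \eqref{high estimate}) that $\frac{d}{dt}E_\eta(t)+D_\eta(t)\le 0$ with $E_\eta\sim E$ and $D_\eta\sim D$, and that by Theorem \ref{th1} one has the global bound $E(t)\le C_0E(0)$ together with $\int_0^\infty D(t)\,dt<\infty$. Since $D$ controls all $x$–derivatives of $(\rho,u,g)$ and, through Lemma \ref{Lemma1}, also $\|g\|_{H^s(\mathcal{L}^2)}^2$ itself, on the range $|\xi|\ge 1$ one has $D_\eta\gtrsim E_\eta$ so the energy decays exponentially there; the whole difficulty is the low frequencies, where the slowly–decaying linear term $div~\tau$ interferes.

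For the low frequencies I would take the Fourier transform in $x$ of \eqref{eq1} and work with $\widehat{\rho},\widehat{u},\widehat{g}$ as functions of $(\xi,t)$. Pairing the $u$–equation with $\overline{\widehat{u}}$ and taking the $\mathcal{L}^2$–inner product of the $g$–equation with $\widehat{g}$, the terms coming from $\widehat{div~\tau}$ and from $\nabla\widehat{u}:\widehat{\tau}$ cancel exactly, as in \cite{He2009,2018Global}; this is the crucial point. By Lemma \ref{Lemma1} the $g$–dissipation obeys $\|\nabla_R\widehat{g}\|_{\mathcal{L}^2}^2\gtrsim\|\widehat{g}\|_{\mathcal{L}^2}^2$, so $g$ is damped at order $O(1)$ uniformly in $\xi$ while the $u$–dissipation is of order $|\xi|^2|\widehat{u}|^2$; coupling this with the $\widehat{\rho}$–equation and adding a small multiple of the cross term $\mathrm{Re}\big(\widehat{u}\cdot\overline{i\xi\,\widehat{\rho}}\,\big)$ to produce the missing $|\xi|^2|\widehat{\rho}|^2$ dissipation, I expect a Lyapunov functional $\mathcal{G}(\xi,t)\sim|\widehat{\rho}|^2+|\widehat{u}|^2+\|\widehat{g}\|_{\mathcal{L}^2}^2$ satisfying, pointwise in $\xi$,
\[
\frac{d}{dt}\mathcal{G}(\xi,t)+c\,\min\!\big(|\xi|^2,1\big)\,\mathcal{G}(\xi,t)\ \lesssim\ \big|\widehat{F}(\xi,t)\big|^2,
\]
where $F$ collects all the nonlinear terms of \eqref{eq1}. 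Each term of $F$ carries either a factor $i\xi$ (transport and divergence–form nonlinearities) or an $R$–derivative absorbed by the $g$–dissipation, and the factor $\tau(g)$ in the nonlinear pressure term is controlled by Lemma \ref{Lemma1} and Corollary \ref{rema}, so $|\widehat{F}(\xi,t)|$ is bounded by $(1+|\xi|)$ times quadratic combinations of $\|(\rho,u)(t)\|_{L^2}$, $\|g(t)\|_{L^2(\mathcal{L}^2)}$ and $\|\nabla_Rg(t)\|_{L^2(\mathcal{L}^2)}$.

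With this in hand I would run the Fourier splitting argument. Put $S(t)=\{\xi:|\xi|^2\le C_3/(1+t)\}$ with $C_3$ large. From $\frac{d}{dt}E_\eta+D_\eta\le0$ and $D_\eta\gtrsim\frac{1}{1+t}\big(E_\eta-C\int_{S(t)}(|\widehat{\rho}|^2+|\widehat{u}|^2)\,d\xi\big)$ — the $\rho$ low–frequency part being recovered through the cross terms already present in $E_\eta$ — one obtains $\frac{d}{dt}\big[(1+t)^{N}E_\eta\big]\lesssim(1+t)^{N-1}\int_{S(t)}\mathcal{G}(\xi,t)\,d\xi$ for $N=N(C_3)$ large. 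Integrating the pointwise inequality above in time and then over $\xi\in S(t)$, the linear part gives $\int_{S(t)}|\widehat{(\rho_0,u_0,g_0)}|^2\,d\xi\lesssim(1+t)^{-d/2}$ by the hypothesis $(\rho_0,u_0,g_0)\in\dot{B}^{-\frac{d}{2}}_{2,\infty}$ of Theorem \ref{th2} (only its low frequencies are used, as in the remark there), and the nonlinear part is bounded by $|S(t)|\int_0^t|\widehat{F}|^2\,ds\lesssim(1+t)^{-d/2}\int_0^t(\text{quadratic energy/dissipation})(s)\,ds$, which stays finite thanks to $E(t)\le C_0E(0)$ and $\int_0^\infty D\,dt<\infty$. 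A short bootstrap — start from $E(t)\le C_0E(0)$, obtain a first algebraic rate, reinsert it, iterate — then yields $E(t)\lesssim(1+t)^{-d/4}$; the restriction $d\ge3$ is needed because for $d=2$ the first step of this iteration produces no decay.

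The main obstacle, as the paper itself flags, is the stress tensor: $\tau$ is only a first $R$–moment of $g$ and does not decay fast on its own, so the Fourier–space cancellation between the $u$– and $g$–equations is indispensable and cannot be replaced by direct estimation. Two secondary points need care: $D_\eta$ does not control $\|\rho\|_{L^2}$, so one must keep exploiting the cross terms $\eta\int\Lambda^m u\cdot\nabla\Lambda^m\rho$ from Proposition \ref{pro2}; and the nonlinear Duhamel contributions must be kept subcritical throughout the bootstrap — which is precisely where this "standard" argument stalls at $(1+t)^{-d/4}$, so the optimal $L^2$ rate of Theorem \ref{th2} still requires the separate Littlewood–Paley refinement.
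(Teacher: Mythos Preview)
Your overall strategy matches the paper's exactly: Fourier splitting on $\frac{d}{dt}E_\eta+D_\eta\le0$, a Fourier--side Lyapunov functional using the cancellation of $\tau$ between the $u$-- and $g$--equations, the cross term $\mathrm{Re}[\hat\rho\, i\xi\cdot\bar{\hat u}]$ to produce $|\xi|^2|\hat\rho|^2$, and then bootstrap. The gap is your claimed pointwise inequality $\frac{d}{dt}\mathcal{G}+c\min(|\xi|^2,1)\mathcal{G}\lesssim|\widehat F|^2$. Pairing the $u$--equation with $\bar{\hat u}$ produces $\mathcal{R}e[\hat G\cdot\bar{\hat u}]$, and contrary to what you assert, not every term of $G$ carries an $i\xi$ factor: neither $u\cdot\nabla u$ nor $[i(\rho)-1]\,div\,\Sigma(u)$ is in divergence form, so Young's inequality would leave an unabsorbable $|\hat u|^2$ (the only $u$--dissipation is $|\xi|^2|\hat u|^2$). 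The paper therefore keeps $|\hat G\cdot\bar{\hat u}|$ intact in the time--integrated estimate and bounds $\int_{S(t)}\int_0^t|\hat G\cdot\bar{\hat u}|\,ds\,d\xi$ by Cauchy--Schwarz in $\xi$, picking up only $|S(t)|^{1/2}\sim(1+t)^{-d/4}$ rather than your $|S(t)|\sim(1+t)^{-d/2}$. This yields the weaker first bound $\int_{S(t)}(|\hat\rho|^2+|\hat u|^2)\,d\xi\lesssim(1+t)^{-d/4+1/2}$ and hence $E(t)\lesssim(1+t)^{-d/4+1/2}$, which is then reinserted to reach $(1+t)^{-d/4}$ (directly for $d\ge5$, and with one more iteration for $d=3,4$).

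A second, smaller point: Fourier splitting on $\|\nabla\rho\|^2_{H^{s-1}}$ yields $\frac{C_d}{1+t}\|\rho\|^2_{H^{s-1}}$, not $\frac{C_d}{1+t}\|\rho\|^2_{H^s}$, so the piece $\|\Lambda^s\rho\|^2_{L^2}$ sitting in $E_\eta$ is not covered by your inequality $D_\eta\gtrsim\frac{1}{1+t}(E_\eta-\ldots)$. The paper closes this in \eqref{ineq10} by observing $\|\Lambda^s\rho\|^2_{L^2}\le C\,D_\eta$ and feeding the full energy inequality back into the weighted time integral.
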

\begin{proof}
Firstly, we consider the simple case $d\geq 5$.
According to Proposition \ref{pro2}, we have
\begin{align}\label{energy estimate}
\frac d {dt} E_\eta(t)+D_\eta(t)\leq 0.
\end{align}
Using Schonbek's strategy, we consider $S(t)=\{\xi:|\xi|^2\leq C_d(1+t)^{-1}\}$ where the constant $C_d$ will be chosen later on.  Then we have
$$\|\nabla u\|^2_{H^s}=\int_{S(t)}(1+|\xi|^{2s})|\xi|^2|\hat{u}(\xi)|^2 d\xi+\int_{S(t)^c}(1+|\xi|^{2s})|\xi|^2|\hat{u}(\xi)|^2 d\xi.$$
We deduce that
$$\frac {C_d} {1+t} \int_{S(t)^c}(1+|\xi|^{2s})|\hat{u}(\xi)|^2 d\xi\leq\|\nabla u\|^2_{H^s},$$
and
$$\frac {C_d} {1+t} \int_{S(t)^c}(1+|\xi|^{2s-2})|\hat{\rho}(\xi)|^2 d\xi\leq\|\nabla \rho\|^2_{H^{s-1}},$$ which implies that
\begin{align}\label{ineq1}
\frac d {dt} E_\eta(t)+\frac {\mu C_d} {1+t}\| u\|^2_{H^s}+\frac {\eta\gamma C_d} {1+t}\|\rho\|^2_{H^{s-1}}
+\|\nabla_R g\|^2_{H^{s}(\mathcal{L}^{2})}\leq \frac {CC_d} {1+t}\int_{S(t)}|\hat{u}(\xi)|^2+|\hat{\rho}(\xi)|^2 d\xi.
\end{align}

From now on, we consider the $L^2$ estimate to the low frequency part of $(\rho,u)$. Taking Fourier transform with respect to $x$ in \eqref{eq1}, we obtain
\begin{align}\label{eq2}
\left\{
\begin{array}{ll}
\hat{\rho}_t+i\xi_{k} \hat{u}^k=\hat{F},  \\[1ex]
\hat{u}^{j}_t+\mu|\xi|^2 \hat{u}^j+(\mu+\mu')\xi_{j} \xi_{k} \hat{u}^k+i\xi_{j} \gamma\hat{\rho}-i\xi_{k} \hat{\tau}^{jk}=\hat{G}^j,  \\[1ex]
\hat{g}_t+\mathcal{L}\hat{g}-i\xi_{k} \hat{u}^j R_j \partial_{R_k}\mathcal{U}+i\xi_{k} \hat{u}^k=\hat{H}, \\[1ex]
\end{array}
\right.
\end{align}
where $F=-div(\rho u)$, $G=-u\cdot\nabla u+[i(\rho)-1](div\Sigma(u)+div \tau)+[\gamma-h(\rho)]\nabla\rho$ and $H=-u\cdot\nabla g-\frac 1 {\psi_\infty} \nabla_R\cdot(\nabla u Rg\psi_\infty)$.  \\
Multiplying $\bar{\hat{\rho}}(t,\xi)$ to the first equation of \eqref{eq2} and taking the real part, we have
\begin{align}\label{eq3}
\frac 1 2 \frac d {dt} |\hat{\rho}|^2+\mathcal{R}e[i\xi\cdot\hat{u}\bar{\hat{\rho}}]=\mathcal{R}e[\hat{F}\bar{\hat{\rho}}].
\end{align}
Multiplying $\bar{\hat{u}}^j(t,\xi)$ with $1\leq j\leq d$ to the second equation of \eqref{eq2} and considering the real part, we deduce that
\begin{align}\label{eq4}
\frac 1 2 \frac d {dt} |\hat{u}|^2+\mathcal{R}e[\gamma\hat{\rho}i\xi\cdot\bar{\hat{u}}]+\mu|\xi|^2 |\hat{u}|^2+(\mu+\mu')|\xi\cdot\hat{u}|^2-\mathcal{R}e[i\xi\otimes\bar{\hat{u}}(t,\xi):\hat{\tau}]=\mathcal{R}e[\hat{G}\cdot\bar{\hat{u}}].
\end{align}
Multiplying $\bar{\hat{g}}(t,\xi,R)\psi_\infty$ to the third equation of \eqref{eq2}, integrating over $B$ with $R$ and taking the real part, we have
\begin{align}\label{eq5}
\frac 1 2 \frac d {dt} \|\hat{g}\|^2_{\mathcal{L}^2}+
\|\nabla_R \hat{g}\|^2_{\mathcal{L}^2}-\mathcal{R}e[i\xi\otimes\hat{u}:\bar{\hat{\tau}}]=\mathcal{R}e[\int_{B}\hat{H}\bar{\hat{g}}\psi_\infty dR],
\end{align}
where we using the fact $\int_{B}i\xi_{k} \hat{u}^k\bar{\hat{g}}\psi_\infty dR=0$.
It is easy to verify that
$$\mathcal{R}e[i\xi\cdot\hat{u}\bar{\hat{\rho}}]+\mathcal{R}e[\hat{\rho}i\xi\cdot\bar{\hat{u}}]
=\mathcal{R}e[i\xi\otimes\bar{\hat{u}}(t,\xi):\hat{\tau}]+\mathcal{R}e[i\xi\otimes\hat{u}:\bar{\hat{\tau}}]=0,$$
which implies that
\begin{align}\label{eq6}
&\frac 1 2 \frac d {dt} (\gamma|\hat{\rho}|^2+|\hat{u}|^2+\|\hat{g}\|^2_{\mathcal{L}^2})+\mu|\xi|^2 |\hat{u}|^2+(\mu+\mu')|\xi\cdot\hat{u}|^2
+\|\nabla_R \hat{g}\|^2_{\mathcal{L}^2}  \\ \notag
&=\mathcal{R}e[\gamma\hat{F}\bar{\hat{\rho}}]+\mathcal{R}e[\hat{G}\cdot\bar{\hat{u}}]+\mathcal{R}e[\int_{B}\hat{H}\bar{\hat{g}}\psi_\infty dR].
\end{align}
Multiplying $i\xi\cdot\bar{\hat{u}}$ to the first equation of \eqref{eq2} and considering the real part, we obtain
\begin{align}\label{eq7}
\mathcal{R}e[\hat{\rho}_t i\xi\cdot\bar{\hat{u}}]-|\xi\cdot\hat{u}|^2=\mathcal{R}e[\hat{F}i\xi\cdot\bar{\hat{u}}].
\end{align}
Multiplying $-i\xi_j\bar{\hat{\rho}}$ with $1\leq j\leq d$ to the second equation of \eqref{eq2} and taking the real part, we get
\begin{align}\label{eq8}
\mathcal{R}e[\hat{\rho}i\xi\cdot\bar{\hat{u}}_t]+\gamma|\xi|^2 |\hat{\rho}|^2+(2\mu+\mu')|\xi|^2 \mathcal{R}e[\hat{\rho}i\xi\cdot\bar{\hat{u }}] -\mathcal{R}e[\hat{\rho}\xi\otimes\xi:\bar{\hat{\tau}}]=\mathcal{R}e[\bar{\hat{G}}\cdot i\xi\hat{\rho}].
\end{align}
It follows from \eqref{eq6}$-$\eqref{eq8} that
\begin{align}\label{eq9}
&\frac 1 2 \frac d {dt} (\gamma|\hat{\rho}|^2+|\hat{u}|^2+\|\hat{g}\|^2_{\mathcal{L}^2}+2(\mu+\mu')\mathcal{R}e[\hat{\rho}i\xi\cdot\bar{\hat{u}}])+\mu|\xi|^2 |\hat{u}|^2+(\mu+\mu')\gamma|\xi|^2 |\hat{\rho}|^2
+\|\nabla_R \hat{g}\|^2_{\mathcal{L}^2}  \\ \notag
&=-(\mu+\mu')(2\mu+\mu')|\xi|^2 \mathcal{R}e[\hat{\rho}i\xi\cdot\bar{\hat{u }}] +(\mu+\mu')\mathcal{R}e[\hat{\rho}\xi\otimes\xi:\bar{\hat{\tau}}]+(\mu+\mu')\mathcal{R}e[\hat{F}i\xi\cdot\bar{\hat{u}}]  \\ \notag
&+(\mu+\mu')\mathcal{R}e[\bar{\hat{G}}\cdot i\xi\hat{\rho}]+\mathcal{R}e[\gamma\hat{F}\bar{\hat{\rho}}]+\mathcal{R}e[\hat{G}\cdot\bar{\hat{u}}]+\mathcal{R}e[\int_{B}\hat{H}\bar{\hat{g}}\psi_\infty dR].
\end{align}
Consider $\xi\in S(t)$, we deduce that
\begin{align}\label{ineq2}
&(\mu+\mu')\mathcal{R}e[\hat{F}i\xi\cdot\bar{\hat{u}}]+(\mu+\mu')\mathcal{R}e[\bar{\hat{G}}\cdot i\xi\hat{\rho}]+\mathcal{R}e[\gamma\hat{F}\bar{\hat{\rho}}] \\
\notag
&\leq C(|\widehat{\rho u}|^2+|\hat{G}|^2)+\frac 1 {10} (\mu|\xi|^2 |\hat{u}|^2+(\mu+\mu')\gamma|\xi|^2 |\hat{\rho}|^2).
\end{align}
Let $t$ be sufficiently large, we obtain
\begin{align}\label{ineq3}
2(\mu+\mu')\mathcal{R}e[\hat{\rho}i\xi\cdot\bar{\hat{u}}]\leq
\frac 1 {10}(|\hat{u}|^2+\gamma|\hat{\rho}|^2).
\end{align}
Integrating by part and using Lemmas \ref{Lemma1}, \ref{Lemma2}, we get
\begin{align*}
&-(\mu+\mu')(2\mu+\mu')|\xi|^2 \mathcal{R}e[\hat{\rho}i\xi\cdot\bar{\hat{u }}]+(\mu+\mu')\mathcal{R}e[\hat{\rho}\xi\otimes\xi:\bar{\hat{\tau}}]  \\
&\leq\frac 1 {10}(\mu|\xi|^2 |\hat{u}|^2+(\mu+\mu')\gamma|\xi|^2 |\hat{\rho}|^2
+\|\nabla_R \hat{g}\|^2_{\mathcal{L}^2}),
\end{align*}
and
\begin{align*}
\begin{split}
\mathcal{R}e[\int_{B}\hat{H}\bar{\hat{g}}\psi_\infty dR]&\leq C_\delta\int_{B}\psi_\infty|\mathcal{F}(u\cdot\nabla g)|^2
+\psi_\infty|\mathcal{F}(\nabla u\cdot{R}g)|^2  dR+\delta\|\nabla_R \hat{g}\|^2_{\mathcal{L}^2},  \\
|\xi|^2\|\hat{g}\|^2_{\mathcal{L}^2}&\leq\|\nabla_R \hat{g}\|^2_{\mathcal{L}^2}.
\end{split}
\end{align*}
Combining all the estimates for \eqref{eq9}, we deduce that
\begin{align}\label{ineq4}
&|\hat{\rho}|^2+|\hat{u}|^2+\|\hat{g}\|^2_{\mathcal{L}^2}
\leq C(|\hat{\rho}_0|^2+|\hat{u}_0|^2+\|\hat{g}_0\|^2_{\mathcal{L}^2})+C\int_{0}^{t}|\hat{G}\cdot\bar{\hat{u}}|+|\widehat{\rho u}|^2+|\hat{G}|^2 ds  \\ \notag
&+C_\delta\int_{0}^{t}\int_{B}\psi_\infty|\mathcal{F}(u\cdot\nabla g)|^2+\psi_\infty|\mathcal{F}(\nabla u\cdot{R}g)|^2 dRds.
\end{align}
Integrating over $S(t)$ with $\xi$, then we have
\begin{align}\label{ineq5}
&\int_{S(t)}|\hat{\rho}|^2+|\hat{u}|^2+\|\hat{g}\|^2_{\mathcal{L}^2}d\xi
\leq C\int_{S(t)} (|\hat{\rho}_0|^2+|\hat{u}_0|^2+\|\hat{g}_0\|^2_{\mathcal{L}^2})d\xi+C\int_{S(t)}\int_{0}^{t}|\hat{G}\cdot\bar{\hat{u}}|+|\widehat{\rho u}|^2+|\hat{G}|^2dsd\xi  \\ \notag
&+C_\delta\int_{S(t)}\int_{0}^{t}\int_{B}\psi_\infty|\mathcal{F}(u\cdot\nabla g)|^2+\psi_\infty|\mathcal{F}(\nabla u\cdot{R}g)|^2 dRdsd\xi.
\end{align}
If $E(0)<\infty$ and $(\rho_0,u_0,g_0)\in \dot{B}^{-\frac d 2}_{2,\infty}\times \dot{B}^{-\frac d 2}_{2,\infty}\times \dot{B}^{-\frac d 2}_{2,\infty}(\mathcal{L}^2)$, using Proposition \ref{pro0}, we have
\begin{align}\label{ineq6}
\int_{S(t)}(|\hat{\rho}_0|^2+|\hat{u}_0|^2+\|\hat{g}_0\|^2_{\mathcal{L}^2})d\xi
&\leq\sum_{j\leq \log_2[\frac {4} {3}C_d^{\frac 1 2 }(1+t)^{-\frac 1 2}]}\int_{\mathbb{R}^{d}} 2\varphi^2(2^{-j}\xi)(|\hat{\rho}_0|^2+|\hat{u}_0|^2+\|\hat{g}_0\|^2_{\mathcal{L}^2})d\xi \\ \notag
&\leq\sum_{j\leq \log_2[\frac {4} {3}C_d^{\frac 1 2 }(1+t)^{-\frac 1 2}]}(\|\dot{\Delta}_j u_0\|^2_{L^2}+\|\dot{\Delta}_j \rho_0\|^2_{L^2}+\|\dot{\Delta}_j g_0\|^2_{L^2(\mathcal{L}^2)}) \\ \notag
&\leq\sum_{j\leq \log_2[\frac {4} {3}C_d^{\frac 1 2 }(1+t)^{-\frac 1 2}]}2^{jd}(\|u_0\|^2_{\dot{B}^{-\frac d 2}_{2,\infty}}+\|\rho_0\|^2_{\dot{B}^{-\frac d 2}_{2,\infty}}+\|g_0\|^2_{\dot{B}^{-\frac d 2}_{2,\infty}(\mathcal{L}^2)}) \\ \notag
&\leq C(1+t)^{-\frac d 2}(\|u_0\|^2_{\dot{B}^{-\frac d 2}_{2,\infty}}+\|\rho_0\|^2_{\dot{B}^{-\frac d 2}_{2,\infty}}+\|g_0\|^2_{\dot{B}^{-\frac d 2}_{2,\infty}(\mathcal{L}^2)}).
\end{align}
Using Minkowski's inequality and Theorem \ref{th1}, we obtain
\begin{align}\label{ineq7}
\int_{S(t)}\int_{0}^{t}|\widehat{\rho u}|^2dsd\xi
&=\int_{0}^{t}\int_{S(t)}|\widehat{\rho u}|^2 d\xi ds  \\ \notag
&\leq C\int_{S(t)}d\xi \int_{0}^{t}\||\widehat{\rho u}|^2\|_{L^{\infty}}ds \\ \notag
&\leq C(1+t)^{-\frac d 2} \int_{0}^{t}\|\rho\|^2_{L^{2}}\|u\|^2_{L^{2}}ds \\ \notag
&\leq C(1+t)^{-\frac d 2+1},
\end{align}
and
\begin{align*}
\int_{S(t)}\int_{0}^{t}|\hat{G}|^2dsd\xi
&\leq C\int_{S(t)}d\xi \int_{0}^{t}\||\hat{G}|^2\|_{L^{\infty}}ds \\ \notag
&\leq C(1+t)^{-\frac d 2}.
\end{align*}
Similarly, we deduce that
\begin{align}\label{ineq8}
\int_{S(t)}\int_{0}^{t}|\hat{G}\cdot\bar{\hat{u}}|dsd\xi
&=\int_{0}^{t}\int_{S(t)}|\hat{G}\cdot\bar{\hat{u}}| d\xi ds  \\ \notag
&\leq C(\int_{S(t)}d\xi)^{\frac 1 2} \int_{0}^{t}\|\hat{G}\cdot\bar{\hat{u}}\|_{L^{2}}ds \\ \notag
&\leq C(1+t)^{-\frac d 4} \int_{0}^{t}(\|u\|^2_{L^{2}}+\|\rho\|^2_{L^{2}})D(s)^{\frac 1 2}ds \\ \notag
&\leq C(1+t)^{-\frac d 4+\frac 1 2}.
\end{align}
Using Theorem \ref{th1} and Lemma \ref{Lemma1}, we get
\begin{align*}
&\int_{S(t)}\int_{0}^{t}\int_{B}\psi_\infty|\mathcal{F}(u\cdot\nabla g)|^2+\psi_\infty|\mathcal{F}(\nabla u\cdot{R}g)|^2dRdsd\xi  \\ \notag
&\leq C(1+t)^{-\frac d 2} \int_{0}^{t}\|u\|^2_{L^{2}}\|\nabla g\|^2_{L^{2}(\mathcal{L}^{2})}+\|\nabla u\|^2_{L^{2}}\|g\|^2_{L^{2}(\mathcal{L}^{2})}ds \\ \notag
&\leq C(1+t)^{-\frac d 2}.
\end{align*}
Plugging the above estimates into \eqref{ineq5}, we obtain
\begin{align}\label{ineq9}
\int_{S(t)}|\hat{\rho}(t,\xi)|^2+|\hat{u}(t,\xi)|^2 d\xi\leq C(1+t)^{-\frac d 4+\frac 1 2}.
\end{align}
According to \eqref{ineq1} and \eqref{ineq9}, we deduce that
\begin{align*}
\frac d {dt} E_\eta(t)+\frac {\mu C_d} {1+t}\| u\|^2_{H^s}+\frac {\eta\gamma C_d} {1+t}\|\rho\|^2_{H^{s-1}}
+\|\nabla_R g\|^2_{H^{s}(\mathcal{L}^{2})}\leq \frac {CC_d} {1+t}(1+t)^{-\frac d 4+\frac 1 2},
\end{align*}
If $C_d$ large enough, according to \eqref{energy estimate}, then we have
\begin{align}\label{ineq10}
(1+t)^{\frac d 4+\frac 1 2}E_\eta(t)&\leq C(1+t)+C\int_{0}^{t}\|\Lambda^s \rho\|^2_{L^{2}}(1+s)^{\frac d 4-\frac 1 2}ds  \\ \notag
&\leq C(1+t)+C\int_{0}^{t}D_\eta(s)(1+s)^{\frac d 4-\frac 1 2}ds  \\  \notag
&\leq C(1+t)+C\int_{0}^{t}E_\eta(s)(1+s)^{\frac d 4-\frac 3 2}ds\\  \notag
&\leq C(1+t)+CE_\eta(t)(1+t)^{\frac d 4-\frac 1 2},
\end{align}
which implies that
\begin{align}\label{ineq11}
E_\eta(t)\leq C(1+t)^{-\frac d 4+\frac 1 2}.
\end{align}
We now improve the decay rate in \eqref{ineq11} by estimating \eqref{ineq7} and \eqref{ineq8} again. Since $d\geq 5$, it follows that
\begin{align*}
\int_{S(t)}\int_{0}^{t}|\widehat{\rho u}|^2dsd\xi
&\leq C(1+t)^{-\frac d 2} \int_{0}^{t}\|\rho\|^2_{L^{2}}\|u\|^2_{L^{2}}ds \\ \notag
&\leq C(1+t)^{-\frac d 4},
\end{align*}
and
\begin{align*}
\int_{S(t)}\int_{0}^{t}|\hat{G}\cdot\bar{\hat{u}}|dsd\xi
&\leq C(1+t)^{-\frac d 4} \int_{0}^{t}(\|u\|^2_{L^{2}}+\|\rho\|^2_{L^{2}})D(s)^{\frac 1 2}ds \\ \notag
&\leq C(1+t)^{-\frac d 4},
\end{align*}
which implies that
\begin{align}\label{ineq12}
\frac d {dt} E_\eta(t)+\frac {\mu C_d} {1+t}\| u\|^2_{H^s}+\frac {\eta\gamma C_d} {1+t}\|\rho\|^2_{H^{s-1}}
+\|\nabla_R g\|^2_{H^{s}(\mathcal{L}^{2})}\leq \frac {CC_d} {1+t}(1+t)^{-\frac d 4}.
\end{align}
Then the proof of \eqref{ineq11} implies that
\begin{align}\label{ineq13}
E(t)\leq CE_\eta(t)\leq C(1+t)^{-\frac d 4}.
\end{align}

For $d=3,4$, we can not obtain the optimal decay rate $(1+t)^{-\frac d 4}$ directly. Indeed, we can first prove that $E(t)\leq (1+t)^{-\frac{d}{4}+\frac{1}{2}}$. By using the standard bootstrap argument, one can improve the decay rate to $(1+t)^{-\frac d 4}$.  We omit the proof here.
\end{proof}

\begin{rema}
The proposition \ref{pro3} indicates that
$$\|\rho\|_{L^2}+\|u\|_{L^2}\leq C(1+t)^{-\frac d 8}.$$
Combining with the incompressible FENE model and CNS system, one can see that this is not the optimal time decay.
\end{rema}

In order to improve the decay rate, we have to estimate the high order energy.
Denote that
\begin{align*}
E^1_\eta(t)&=\sum_{n=1,s}(\|h(\rho)^{\frac 1 2}\Lambda^n\rho \|^2_{L^2}+\|(1+\rho)^{\frac 1 2}\Lambda^n u\|^2_{L^2})  \\ \notag
&+\|\Lambda^1 g\|^2_{H^{s-1}(\mathcal{L}^{2})}+2\eta\sum_{m=1,s-1}\int_{\mathbb{R}^{d}} \Lambda^m u\nabla\Lambda^m \rho dx,
\end{align*}
and
$$D^1_\eta (t)=\eta\gamma\|\nabla\Lambda^1 \rho\|^2_{H^{s-2}}+\mu\|\nabla\Lambda^1 u\|^2_{H^{s-1}}+(\mu+\mu')\|div\Lambda^1u\|^2_{H^{s-1}}+\|\Lambda^1\nabla_R g\|^2_{H^{s-1}(\mathcal{L}^{2})}.$$
The following proposition is about the high order energy estimate.
\begin{prop}\label{pro4}
Under the condition in Theorem \ref{th2}, if $t\in(0,+\infty)$, then we have
\begin{align}
\frac d {dt}E^1_\eta (t)+D^1_\eta (t)\leq 0, \quad \quad \text{and}\quad \quad &E^1_\eta \leq C(1+t)^{-\frac d 4-1}.
\end{align}
\end{prop}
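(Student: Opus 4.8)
The plan is to upgrade the energy identity of Proposition~\ref{pro2} by one $x$-derivative and then run the Fourier splitting argument on the resulting functional, this time exploiting the rate $E(t)\lesssim(1+t)^{-d/4}$ already furnished by Proposition~\ref{pro3}.

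\emph{Step 1: the dissipative inequality $\frac{d}{dt}E^1_\eta+D^1_\eta\le0$.} I would repeat the computations of the proof of Proposition~\ref{pro2}, but keeping only derivatives of order $n\in\{1,\dots,s\}$. Concretely, apply $\Lambda^n$ to the three equations in \eqref{eq1}, test them in $L^2$, resp. in $L^2(\mathcal L^2)$, against $h(\rho)\Lambda^n\rho$, $(1+\rho)\Lambda^n u$ and $\Lambda^n g$, add the cross estimates obtained by testing the momentum equation differentiated by $\Lambda^m$ against $\nabla\Lambda^m\rho$ for $m\in\{1,s-1\}$, and use the same $\tau$--$\mathrm{div}\,u$ cancellation as in \eqref{eq6}; since $\int_B\Lambda^n g\,\psi_\infty\,dR=0$, Lemma~\ref{Lemma1} applies to $\Lambda^n g$. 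For $n=s$ this is exactly the precise estimate \eqref{high estimate}, already established. The point is structural: because the order-$0$ part is not estimated, every nonlinear or commutator term on the right-hand side carries at least one $x$-derivative on each factor, so after distributing derivatives onto $D^1_\eta$-controlled norms via Lemma~\ref{Lemma0}, Lemma~\ref{Lemma4} and Lemmas~\ref{Lemma1}--\ref{Lemma2} for the $\tau$-contributions, each such term is bounded by $C\,E(t)^{1/2}D^1_\eta(t)$, up to at most a genuinely time-decaying factor $\|(\Lambda\rho,\Lambda u)\|_{L^2}^2\le E(t)$ (which is where Proposition~\ref{pro3} enters for small $d$). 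Choosing $\eta$ small so that $E^1_\eta\sim\sum_{n=1,s}(\|\Lambda^n\rho\|_{L^2}^2+\|\Lambda^n u\|_{L^2}^2)+\|\Lambda g\|^2_{H^{s-1}(\mathcal L^2)}$ and so that the $\eta$-weighted cross terms are absorbed by $D^1_\eta$, and then $\epsilon_0$ small to absorb $C\,E^{1/2}D^1_\eta\le\frac12 D^1_\eta$, yields $\frac{d}{dt}E^1_\eta+D^1_\eta\le0$.

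\emph{Step 2: decay by Fourier splitting.} Put $S(t)=\{\xi:\ |\xi|^2\le C_d(1+t)^{-1}\}$, with $C_d$ large to be chosen. Every dissipative term in $D^1_\eta$ carries two more $x$-derivatives than the matching term in $E^1_\eta$: for instance $\mu\|\nabla\Lambda^1 u\|^2_{H^{s-1}}=\mu\|\Lambda^2 u\|_{L^2}^2+\mu\|\Lambda^{s+1}u\|_{L^2}^2\ge\frac{\mu C_d}{1+t}\big(\|\Lambda u\|^2_{H^{s-1}}-\int_{S(t)}|\xi|^2|\hat{u}|^2\,d\xi\big)$ modulo a higher-order remainder carried by $S(t)$, and likewise for the $\rho$-block with weight $\eta\gamma$, whereas the $g$-block of $E^1_\eta$ is controlled directly by the $g$-block of $D^1_\eta$ through the Poincaré inequality of Lemma~\ref{Lemma1}. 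Hence, for $t$ large, $\frac{d}{dt}E^1_\eta(t)+\frac{cC_d}{1+t}E^1_\eta(t)\le\frac{CC_d}{1+t}\int_{S(t)}|\xi|^2(|\hat{\rho}|^2+|\hat{u}|^2)\,d\xi$. On $S(t)$ one has $\int_{S(t)}|\xi|^2(|\hat{\rho}|^2+|\hat{u}|^2)\,d\xi\le C_d(1+t)^{-1}\big(\|\rho\|_{L^2}^2+\|u\|_{L^2}^2\big)\le C(1+t)^{-1-\frac d4}$ by Proposition~\ref{pro3}. Therefore $\frac{d}{dt}\big[(1+t)^{cC_d}E^1_\eta\big]\lesssim(1+t)^{cC_d-2-\frac d4}$, and choosing $C_d$ large enough that $cC_d>1+\frac d4$, integrating in time and using $E^1_\eta(0)\le CE(0)<\infty$ gives $E^1_\eta(t)\le C(1+t)^{-1-\frac d4}$ (after matching with the trivial bound on a bounded time interval). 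For $d=3,4$ the decaying correction in Step~1 and the low-frequency remainder are not yet summable enough in one pass, so one first establishes a weaker rate and then bootstraps, exactly as was done after Proposition~\ref{pro3}.

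\emph{Main obstacle.} The subtle point is Step~1: one has to verify that, in the order-$n$ energy identities, no right-hand side term survives as a quantity controlled only by $E^1_\eta$ rather than by $D^1_\eta$ (or by a genuinely decaying factor). This is precisely why the ``precise'' estimate \eqref{high estimate} was isolated already in the proof of Proposition~\ref{pro2}; its order-$1$ analogue, together with the interplay of the two cross terms $m=1$ and $m=s-1$, must be treated with the same care, using Gagliardo--Nirenberg interpolation to pin the extra derivative onto the dissipation. A more routine nuisance is balancing the constants $C_d$, $\mu$, $\eta\gamma$ in the Fourier splitting, since $C_d$ must be large for the time-weight exponent while $\eta$ is forced small by Step~1.
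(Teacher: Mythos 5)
Your proposal is correct and follows essentially the same route as the paper: an order-$1$ analogue of the precise estimate \eqref{high estimate} combined with \eqref{high estimate} itself gives $\frac{d}{dt}E^1_\eta+D^1_\eta\le 0$, and then Fourier splitting with the low-frequency remainder $\int_{S(t)}|\xi|^2(|\hat\rho|^2+|\hat u|^2)\,d\xi\lesssim C_d(1+t)^{-1}E(t)\lesssim(1+t)^{-1-\frac d4}$ (via Proposition \ref{pro3}) yields the rate after a time-weighted integration. The only cosmetic deviations are harmless: the paper does not need Proposition \ref{pro3} inside Step 1 (Gagliardo--Nirenberg alone pins every right-hand term onto $E^{1/2}D^1_\eta$), and no further bootstrap is required for $d=3,4$ in Step 2 since the forcing $(1+t)^{-\frac d4-2}$ is already summable against the chosen weight.
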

\begin{proof}
Applying $\Lambda^1$ to $(\ref{eq1})_3$, we obtain
\begin{align}\label{g1}
&\partial_t\Lambda^1 g+\mathcal{L}\Lambda^1 g+div\Lambda^{1}u+\nabla\Lambda^{1}uR\nabla_R \mathcal{U} \\ \notag
&=-u\cdot\nabla\Lambda^1 g-\Lambda^1 u\cdot\nabla g-\frac 1 {\psi_\infty}\nabla_R \cdot(\Lambda^1\nabla uRg\psi_\infty+R\psi_\infty \Lambda^1 g\nabla u).
\end{align}
Taking the $L^2(\mathcal{L}^{2})$ inner product with $\Lambda^1 g $ to $(\ref{g1})$, then we have
\begin{align}
&\frac {1} {2}\frac {d} {dt} \|\Lambda^1 g\|^2_{L^2(\mathcal{L}^{2})}+\|\nabla_R  \Lambda^1 g\|^2_{L^2(\mathcal{L}^{2})}-\int_{\mathbb{R}^{d}}\nabla\Lambda^{1}u:\Lambda^{1}\tau dx
=-\langle u\cdot\nabla \Lambda^1 g,\Lambda^1 g\rangle  \\ \notag
&-\langle\Lambda^1 u\nabla g,\Lambda^1 g\rangle
-\langle\frac 1 {\psi_\infty} \nabla_R\cdot(\Lambda^1\nabla uRg\psi_\infty),\Lambda^1 g\rangle
-\langle\frac 1 {\psi_\infty} \nabla_R\cdot(R\psi_\infty\Lambda^1 g\nabla u),\Lambda^1 g\rangle.
\end{align}
Integrating by parts, we deduce that
\begin{align*}
\begin{split}
-\langle u\cdot\nabla \Lambda^1 g,\Lambda^1 g\rangle-\langle\Lambda^1 u\nabla g,\Lambda^1 g\rangle &\lesssim \|u\|_{H^s}\|\nabla g\|^2_{H^{1}(\mathcal{L}^{2})},  \\
\langle\frac 1 {\psi_\infty} \nabla_R\cdot(\Lambda^1\nabla uRg\psi_\infty), \Lambda^1 g \rangle
&=-\int_{\mathbb{R}^{d}}\int_{B}(\Lambda^1\nabla uR\psi_\infty g)\nabla_R \Lambda^1 g dRdx \\
&\lesssim \|g\|_{L^\infty(\mathcal{L}^{2})}\|\nabla \Lambda^1 u\|_{L^2}\|\nabla_R\Lambda^1 g\|_{L^2(\mathcal{L}^{2})},  \\
-\langle\frac 1 {\psi_\infty} \nabla_R\cdot(R\psi_\infty\Lambda^1 g\nabla u),\Lambda^s g\rangle
&=\langle R\Lambda^1 g\nabla u,\nabla_R\Lambda^1 g\rangle  \\
&\lesssim \|\nabla_R \Lambda^1 g\|_{L^2(\mathcal{L}^{2})}\|u\|_{H^s}\|\nabla g\|_{L^{2}(\mathcal{L}^{2})},
\end{split}
\end{align*}
which implies that
\begin{multline}\label{g2}
\frac {1} {2}\frac {d} {dt} \|\Lambda^1 g\|^2_{L^2(\mathcal{L}^{2})}+\|\nabla_R \Lambda^1 g\|^2_{L^2(\mathcal{L}^{2})}-\int_{\mathbb{R}^{d}}\nabla\Lambda^{1}u:\Lambda^{1}\tau dx \\
\lesssim \|g\|_{L^\infty(\mathcal{L}^{2})}\|\nabla \Lambda^1 u\|_{L^2}\|\nabla_R\Lambda^1 g\|_{L^2(\mathcal{L}^{2})}+\|u\|_{H^s}\|\nabla_R\Lambda^1 g\|^2_{L^{2}(\mathcal{L}^{2})}.
\end{multline}
Applying $\Lambda^1$ to $(\ref{eq1})_1$ and applying $\Lambda^1$ to $(\ref{eq1})_2$, we obtain
\begin{align}\label{r1}
\partial_t\Lambda^1 \rho+div \Lambda^1 u(1+\rho)
=-u\cdot\nabla\Lambda^1 \rho-\Lambda^1 u\nabla\rho-\Lambda^1 \rho ~div~u,
\end{align}
and
\begin{align}\label{r2}
&\partial_t\Lambda^1 u+h(\rho)\nabla\Lambda^1\rho-i(\rho) div\Lambda^1 \Sigma{(u)}-i(\rho) div\Lambda^1 \tau  \\ \notag
&=-u\cdot\nabla\Lambda^1 u-\Lambda^1 u\nabla u-\Lambda^1 [h(\rho)-\gamma]\nabla\rho+\Lambda^1[i(\rho)-1]div\Sigma{(u)}+\Lambda^1[i(\rho)-1]div~\tau.
\end{align}
Multiplying $h(\rho) \Lambda^1 \rho$ to $(\ref{r1})$ and integrating over $\mathbb{R}^{d}$ with $x$, then we have
\begin{align}\label{g3}
&\frac 1 2 \frac {d} {dt} \int_{\mathbb{R}^{d}}h(\rho)|\Lambda^1 \rho|^2 dx+\int_{\mathbb{R}^{d}}P'(1+\rho)\Lambda^1 \rho div\Lambda^1 u dx =\frac 1 2
\int_{\mathbb{R}^{d}} \partial_t h(\rho) |\Lambda^1 \rho|^2 dx  \\ \notag
&-\int_{\mathbb{R}^{d}}\Lambda^1 \rho \cdot h(\rho)  u\cdot\nabla \Lambda^1 \rho dx-\int_{\mathbb{R}^{d}}\Lambda^1 u\nabla\rho\cdot h(\rho) \Lambda^1\rho dx-\int_{\mathbb{R}^{d}}\Lambda^1 \rho div~u\cdot h(\rho) \Lambda^1 \rho dx.
\end{align}
If $d=3$, we treat with the first term as follow
\begin{align*}
&\frac 1 2\int_{\mathbb{R}^{d}} \partial_t h(\rho) |\Lambda^1 \rho|^2 dx
\lesssim (\|\nabla u\|_{L^3}+\|\nabla\rho\|_{L^3})\|\Lambda^1 \rho\|^2_{L^3} \\
&\lesssim \|u\|^{\frac 1 3}_{H^2}\|\Lambda^2 u\|^{\frac 2 3}_{L^2}\|\rho\|^{\frac 2 3}_{H^2}\|\Lambda^2 \rho\|^{\frac 4 3}_{L^2}+\|\rho\|_{H^2}\|\Lambda^2 \rho\|^2_{L^2}\\
&\lesssim (\|u\|_{H^s}+\|\rho\|_{H^s})(\|\Lambda^2 \rho\|^2_{L^2}+\|\Lambda^2 u\|^2_{L^2}).
\end{align*}
If $d\geq 4$, we treat with the first term as follow
\begin{align*}
&\frac 1 2\int_{\mathbb{R}^{d}} \partial_t h(\rho) |\Lambda^1 \rho|^2 dx
\lesssim (\|\nabla u\|_{L^{\frac d 2}}+\|\nabla\rho\|_{L^{\frac d 2}})\|\Lambda^1 \rho\|^2_{L^{\frac {2d} {d-2}}} \\
&\lesssim (\|u\|_{H^s}+\|\rho\|_{H^s})\|\Lambda^2 \rho\|^2_{L^2}.
\end{align*}
Similarly, we obtain
\begin{align*}
-\int_{\mathbb{R}^{d}}\Lambda^1 u\nabla\rho\cdot h(\rho) \Lambda^1\rho dx-\int_{\mathbb{R}^{d}}\Lambda^1 \rho~ div~u\cdot h(\rho) \Lambda^1 \rho dx  \lesssim (\|u\|_{H^s}+\|\rho\|_{H^s})(\|\Lambda^2 \rho\|^2_{L^2}+\|\Lambda^2 u\|^2_{L^2}).
\end{align*}
Applying integration by parts, we have
\begin{align*}
-\int_{\mathbb{R}^{d}}\Lambda^1 \rho \cdot h(\rho)  u\cdot\nabla \Lambda^1 \rho dx
=\frac 1 2 \int_{\mathbb{R}^{d}} div(h(\rho)u)|\Lambda^1 \rho|^2 dx
\lesssim (\|u\|_{H^s}+\|\rho\|_{H^s})(\|\Lambda^2 \rho\|^2_{L^2}+\|\Lambda^2 u\|^2_{L^2}).
\end{align*}
Multiplying $(1+\rho) \Lambda^1 u$ to $(\ref{r2})$ and integrating over $\mathbb{R}^{d}$ with $x$, we have
\begin{align}\label{g4}
&\frac 1 2 \frac {d} {dt} \|(1+\rho)^{\frac 1 2}\Lambda^1 u\|^2_{L^2}+\int_{\mathbb{R}^{d}}P'(1+\rho)\nabla\Lambda^1 \rho \Lambda^1 u dx \\ \notag
&+\mu\|\nabla\Lambda^1 u\|^2_{L^2}+(\mu+\mu')\|div\Lambda^1 u\|^2_{L^2}-\int_{\mathbb{R}^{d}}div\Lambda^1 \tau \Lambda^1 u dx  \\ \notag
&=\frac 1 2 \int_{\mathbb{R}^{d}} \partial_t\rho |\Lambda^1 u|^2 dx
-\int_{\mathbb{R}^{d}}\Lambda^1 u \cdot(1+\rho)u\cdot\nabla \Lambda^1 u dx  \\ \notag
&-\int_{\mathbb{R}^{d}}\Lambda^1 u \nabla u (1+\rho)\Lambda^1 u dx
-\int_{\mathbb{R}^{d}}\Lambda^1[h(\rho)-\gamma]\nabla\rho (1+\rho)\Lambda^1 u dx \\ \notag
&+\int_{\mathbb{R}^{d}}\Lambda^1[i(\rho)-1]div\Sigma{(u)} (1+\rho)\Lambda^1 u dx
+\int_{\mathbb{R}^{d}}\Lambda^1[i(\rho)-1]div~\tau (1+\rho)\Lambda^1 u dx.
\end{align}
Using Lemmas \ref{Lemma0}-\ref{Lemma2}, we deduce that
\begin{align*}
&\frac 1 2 \int_{\mathbb{R}^{d}} \partial_t\rho |\Lambda^1 u|^2 dx-\int_{\mathbb{R}^{d}}\Lambda^1[h(\rho)-\gamma]\nabla\rho (1+\rho)\Lambda^1 u dx
-\int_{\mathbb{R}^{d}}\Lambda^1 u\nabla u (1+\rho)\Lambda^1 u dx  \\ \notag
&\lesssim (\|u\|_{H^s}+\|\rho\|_{H^s})(\|\Lambda^2 \rho\|^2_{L^2}+\|\Lambda^2 u\|^2_{L^2}),
\end{align*}
and
\begin{align*}
&\int_{\mathbb{R}^{d}}\Lambda^1[i(\rho)-1]div~\tau (1+\rho)\Lambda^1 u dx
+\int_{\mathbb{R}^{d}}\Lambda^1[i(\rho)-1]div\Sigma{(u)} (1+\rho)\Lambda^1 u dx  \\ \notag
&\lesssim(\|u\|_{H^s}+\|\rho\|_{H^s})(\|\Lambda^2 \rho\|^2_{L^2}+\|\Lambda^2 u\|^2_{L^2}+\|\nabla_R \nabla g\|^2_{L^{2}(\mathcal{L}^{2})}).
\end{align*}
Integrating by part, we get
\begin{align*}
&-\int_{\mathbb{R}^{d}}\Lambda^1 u \cdot(1+\rho)u\cdot\nabla \Lambda^1 u dx=\frac 1 2 \int_{\mathbb{R}^{d}}div[(1+\rho)u]|\Lambda^1 u|^2 dx \\
&\lesssim(\|u\|_{H^s}+\|\rho\|_{H^s})(\|\Lambda^2 \rho\|^2_{L^2}+\|\Lambda^2 u\|^2_{L^2}),
\end{align*}
and
\begin{align*}
&-\int_{\mathbb{R}^{d}}P'(1+\rho)(\Lambda^1 u\nabla\Lambda^1\rho+\Lambda^1\rho div\Lambda^1 u)dx
=\int_{\mathbb{R}^{d}}P''(1+\rho)\Lambda^1\rho \Lambda^1 u\nabla\rho dx  \\
&\lesssim(\|u\|_{H^s}+\|\rho\|_{H^s})(\|\Lambda^2 \rho\|^2_{L^2}+\|\Lambda^2 u\|^2_{L^2}).
\end{align*}
Multiplying $\nabla\Lambda^{1} \rho$ to $(\ref{r2})$ and integrating over $\mathbb{R}^{d}$ with $x$, we get
\begin{align}\label{g5}
&\frac {d} {dt} \int_{\mathbb{R}^{d}}\Lambda^{1} u \cdot\nabla\Lambda^{1} \rho dx
+\gamma\|\nabla\Lambda^{1}\rho\|^2_{L^{2}}
=-\int_{\mathbb{R}^{d}} \Lambda^{1} \rho_t div\Lambda^{1} u dx \\ \notag
&-\int_{\mathbb{R}^{d}}\nabla\Lambda^{1} \rho\cdot u\cdot\nabla \Lambda^{1} u dx
-\int_{\mathbb{R}^{d}}\Lambda^{1} u\nabla u \nabla\Lambda^{1} \rho dx-\int_{\mathbb{R}^{d}}\Lambda^{1}((h(\rho)-\gamma)\nabla\rho) \nabla\Lambda^{1} \rho dx \\ \notag
&+\int_{\mathbb{R}^{d}}\Lambda^{1}(i(\rho)div\Sigma{(u)}) \nabla\Lambda^{1} \rho dx
+\int_{\mathbb{R}^{d}}\Lambda^{1}(i(\rho)div\tau) \nabla\Lambda^{1} \rho dx.
\end{align}
Using Lemma \ref{Lemma0}, we deduce that
\begin{align*}
&-\int_{\mathbb{R}^{d}} \Lambda^{1} \rho_t div\Lambda^{1} u dx
-\int_{\mathbb{R}^{d}}\nabla\Lambda^{1} \rho\cdot u\cdot\nabla \Lambda^{1} u dx-\int_{\mathbb{R}^{d}}\Lambda^{1} u\nabla u \nabla\Lambda^{1} \rho dx\\
&\lesssim (\|u\|_{H^s}+\|\rho\|_{H^s})(\|\Lambda^2 \rho\|^2_{L^2}+\|\Lambda^2 u\|^2_{L^2})+\|\nabla\Lambda^{1} u\|^2_{L^2},
\end{align*}
and
\begin{align*}
&-\int_{\mathbb{R}^{d}}\Lambda^{1}((h(\rho)-\gamma)\nabla\rho) \nabla\Lambda^{1} \rho dx +\int_{\mathbb{R}^{d}}\Lambda^{1}(i(\rho)div\Sigma{(u)}) \nabla\Lambda^{1} \rho dx\\
&\lesssim \|\rho\|_{H^{s}}\|\nabla\Lambda^{1} \rho\|^2_{L^2}+\|\nabla\Lambda^{1} \rho\|_{L^2}(\|\nabla^2 u\|_{H^{1}}+\|\nabla^2 u\|_{L^2}\|\rho\|_{H^{s}}).
\end{align*}
Using Lemma \ref{Lemma1} and Lemma \ref{Lemma2}, we have
\begin{align*}
\int_{\mathbb{R}^{d}}\Lambda^{1}(i(\rho)div\tau) \nabla\Lambda^{1} \rho dx\lesssim \|\nabla\Lambda^{1} \rho\|_{L^2}\|\nabla_R \nabla g\|_{H^{1}(\mathcal{L}^{2})}(\|\rho\|_{H^{s}}+1).
\end{align*}
Combining \eqref{g2} and the estimates for \eqref{g3}-\eqref{g5}, we deduce that
\begin{align}\label{mid estimate}
&\frac {d} {dt} (\|h(\rho)^{\frac 1 2}\Lambda^1\rho \|^2_{L^2}+\|(1+\rho)^{\frac 1 2}\Lambda^1 u\|^2_{L^2}+\|\Lambda^1 g\|^2_{L^2(\mathcal{L}^{2})}+2\eta\int_{\mathbb{R}^{d}} \Lambda^{1} u\nabla\Lambda^{1} \rho dx)  \\ \notag
&+2(\mu\|\nabla \Lambda^1 u\|^2_{L^2}+(\mu+\mu')\|div\Lambda^1 u\|^2_{L^2}+
\eta\gamma\|\nabla\Lambda^{1}\rho\|^2_{L^2}+\|\nabla_R \Lambda^1 g\|^2_{H^1(\mathcal{L}^{2})})  \\ \notag
&\lesssim (\|\rho\|_{H^{s}}+\|u\|_{H^{s}}+\|g\|_{H^s(\mathcal{L}^{2})})(\|\nabla^2 \rho\|^2_{L^{2}}+\|\nabla^2 u\|^2_{H^{1}}+\|\nabla_R\Lambda^1 g\|^2_{L^{2}(\mathcal{L}^{2})})  \\ \notag
&+\eta(\|\nabla^2 u\|^2_{L^{2}}+\|\nabla^2 u\|_{H^{1}}\|\nabla^2 \rho\|_{L^{2}}+\|\nabla^2 \rho\|_{L^{2}}\|\nabla\nabla_R g\|_{H^{1}(\mathcal{L}^{2})}).
\end{align}
Choosing $\epsilon$ and $\eta$ small enough, the estimates \eqref{mid estimate} and \eqref{high estimate} ensure that
\begin{align}\label{ineq14}
\frac d {dt} E^1_\eta(t)+D^1_\eta (t)\leq 0.
\end{align}
From the above inequality, we deduce that
\begin{align}\label{ineq15}
\begin{split}
&\frac d {dt} E^1_\eta+\frac { C_d} {1+t}(\mu\|\Lambda^1 u\|^2_{H^{s-1}}+\eta\gamma\|\Lambda^1 \rho\|^2_{H^{s-2}})
+\|\Lambda^1 \nabla_R g\|^2_{H^{s-1}(\mathcal{L}^{2})} \\
&\leq \frac {CC_d} {1+t}\int_{S(t)}|\xi|^2(|\hat{u}(\xi)|^2+|\hat{\rho}(\xi)|^2) d\xi.
\end{split}
\end{align}
According to \eqref{ineq13}, we have
\begin{align*}
\frac {CC_d} {1+t}\int_{S(t)}|\xi|^2(|\hat{u}(\xi)|^2+|\hat{\rho}(\xi)|^2) d\xi\leq C{C_d}^2 (1+t)^{-2}(\|\rho\|^2_{L^2}+\|u\|^2_{L^2})\leq C (1+t)^{-\frac d 4-2}.
\end{align*}
Then the proof of \eqref{ineq11} implies that $E^1_\eta \leq C(1+t)^{-\frac d 4-1}$.
We thus complete the proof of Proposition \ref{pro4}.
\end{proof}

By virtue of the decay rate for $E(t)$ and $E^1_\eta(t)$, we can show that the solution of \eqref{eq1} belongs to some Besov space with negative index.
\begin{prop}\label{pro5}
Let $(\rho_0,u_0,g_0)$ satisfy the same condition in Theorem \ref{th2}. Then the corresponding solution
\begin{align}\label{ineq16}
(\rho,u,g)\in L^{\infty}(0,\infty;\dot{B}^{-\frac d 2}_{2,\infty})\times L^{\infty}(0,\infty;\dot{B}^{-\frac d 2}_{2,\infty})\times L^{\infty}(0,\infty;\dot{B}^{-\frac d 2}_{2,\infty}(\mathcal{L}^2)).
\end{align}
\begin{proof}
Applying $\dot{\Delta}_j$ to the system \eqref{eq1}, we get
\begin{align}\label{eq10}
\left\{
\begin{array}{ll}
\dot{\Delta}_j\rho_t+div~\dot{\Delta}_j u=\dot{\Delta}_j F,  \\[1ex]
\dot{\Delta}_j u_t-div\Sigma(\dot{\Delta}_j u)+\gamma\nabla\dot{\Delta}_j \rho-div\dot{\Delta}_j\tau=\dot{\Delta}_j G,  \\[1ex]
\dot{\Delta}_j g_t+\mathcal{L}\dot{\Delta}_j g-\nabla\dot{\Delta}_j u R_j \partial_{R_k}\mathcal{U}+div \dot{\Delta}_j u=\dot{\Delta}_j H, \\[1ex]
\end{array}
\right.
\end{align}
where $F=-div(\rho u)$, $G=-u\cdot\nabla u+[i(\rho)-1](div\Sigma(u)+div \tau)+[\gamma-h(\rho)]\nabla\rho$ and $H=-u\cdot\nabla g-\frac 1 {\psi_\infty} \nabla_R\cdot(\nabla u Rg\psi_\infty)$.

Using the fact that $\int_{B} \dot{\Delta}_j g\psi_\infty dR=0$ and integrating by parts, we obtain
\begin{align}\label{ineq17}
&\frac 1 2 \frac d {dt}(\gamma\|\dot{\Delta}_j \rho\|^2_{L^2}+\|\dot{\Delta}_j u\|^2_{L^2}+\|\dot{\Delta}_j g\|^2_{L^2(\mathcal{L}^{2})}) \\ \notag
&+\mu\|\nabla\dot{\Delta}_j u\|^2_{L^2}+(\mu+\mu')\|div \dot{\Delta}_j u\|^2_{L^2}+\|\nabla_R \dot{\Delta}_j g\|^2_{L^2(\mathcal{L}^{2})}   \\ \notag
&=\int_{\mathbb{R}^{d}} \gamma\dot{\Delta}_j F\dot{\Delta}_j \rho dx+\int_{\mathbb{R}^{d}} \dot{\Delta}_j G\dot{\Delta}_j u dx+\int_{\mathbb{R}^{d}}\int_{B} \dot{\Delta}_j H\dot{\Delta}_j g \psi_\infty dxdR   \\ \notag
&\leq C(\|\dot{\Delta}_j F\|_{L^2}\|\dot{\Delta}_j \rho\|_{L^2}+\|\dot{\Delta}_j G\|_{L^2}\|\dot{\Delta}_j u\|_{L^2}) \\ \notag
&+C(\|\dot{\Delta}_j (u\nabla g)\|^2_{L^2(\mathcal{L}^{2})})+\|\dot{\Delta}_j (\nabla uRg)\|^2_{L^2(\mathcal{L}^{2})})+\frac 1 2 \|\nabla_R \dot{\Delta}_j g\|^2_{L^2(\mathcal{L}^{2})} .
\end{align}
Multiplying both sides of \eqref{ineq17} by $2^{-jd}$ and taking $l^\infty$-norm, we get
\begin{align}\label{ineq18}
&\frac d {dt}(\gamma\|\rho\|^2_{\dot{B}^{-\frac d 2}_{2,\infty}}+\|u\|^2_{\dot{B}^{-\frac d 2}_{2,\infty}}+\|g\|^2_{\dot{B}^{-\frac d 2}_{2,\infty}(\mathcal{L}^{2})})  \\ \notag
&\leq C(\|F\|_{\dot{B}^{-\frac d 2}_{2,\infty}}\|\rho\|_{\dot{B}^{-\frac d 2}_{2,\infty}}+\|G\|_{\dot{B}^{-\frac d 2}_{2,\infty}}\|u\|_{\dot{B}^{-\frac d 2}_{2,\infty}}+\|u\nabla g\|^2_{\dot{B}^{-\frac d 2}_{2,\infty}(\mathcal{L}^{2})}+\|\nabla uRg\|^2_{\dot{B}^{-\frac d 2}_{2,\infty}(\mathcal{L}^{2})}).
\end{align}
Define $M(t)=\sup_{s\in[0,t]} \|\rho(s)\|_{\dot{B}^{-\frac d 2}_{2,\infty}}+\|u(s)\|_{\dot{B}^{-\frac d 2}_{2,\infty}}+\|g(s)\|_{\dot{B}^{-\frac d 2}_{2,\infty}(\mathcal{L}^{2})}$. According to \eqref{ineq18}, we deduce that
\begin{align}\label{ineq19}
M^2(t)&\leq CM^2(0)+M(t)\int_0^{t}\|F\|_{\dot{B}^{-\frac d 2}_{2,\infty}}+\|G\|_{\dot{B}^{-\frac d 2}_{2,\infty}}ds  \\ \notag
&+\int_0^{t}\|u\nabla g\|^2_{\dot{B}^{-\frac d 2}_{2,\infty}(\mathcal{L}^{2})}+\|\nabla uRg\|^2_{\dot{B}^{-\frac d 2}_{2,\infty}(\mathcal{L}^{2})}ds.
\end{align}
Using the fact that $L^1\hookrightarrow \dot{B}^{-\frac d 2}_{2,\infty}$ and the decay rates for $E$ and $E^1_\eta$, we obtain
\begin{align*}
\int_0^{t}\|u\nabla g\|^2_{\dot{B}^{-\frac d 2}_{2,\infty}(\mathcal{L}^{2})}+\|\nabla uRg\|^2_{\dot{B}^{-\frac d 2}_{2,\infty}(\mathcal{L}^{2})}ds
&\leq C\int_0^{t}\|u\nabla g\|^2_{L^1(\mathcal{L}^{2})}+\|\nabla uRg\|^2_{L^1(\mathcal{L}^{2})}ds  \\
&\leq C\int_0^{t}\|\nabla u\|^2_{L^2}\|g\|^2_{L^2(\mathcal{L}^{2})}+\|u\|^2_{L^2}\|\nabla g\|^2_{L^2(\mathcal{L}^{2})}ds\leq C,
\end{align*}
and
\begin{align*}
\int_0^{t}\|F\|_{\dot{B}^{-\frac d 2}_{2,\infty}}ds
&\leq C\int_0^{t}\|F\|_{L^1}ds \leq C\int_0^{t}\|u\|_{L^2}\|\nabla\rho\|_{L^2}+\|div u\|_{L^2}\|\rho\|_{L^2}ds   \\
&\leq C\int_0^{t}(1+s)^{-\frac d 8-\frac d 8-\frac 1 2}ds \leq C.
\end{align*}
By virtue of remark \ref{rema}, we get
\begin{align}\label{ineq20}
\int_0^{t}\|G\|_{L^1}ds
&\leq C\int_0^{t}(1+s)^{-\frac d 8-\frac d 8-\frac 1 2}ds+C\int_0^{t}\|div~\tau\|_{L^2}\|\rho\|_{L^2}ds  \\ \notag
&\leq C+C\int_0^{t}\|\rho\|_{L^2}\|\nabla g\|^{\frac 1 2}_{L^2(\mathcal{L}^{2})}\|\nabla\nabla_R g\|^{\frac 1 2}_{L^2(\mathcal{L}^{2})}ds  \\ \notag
&\leq C+C(\int_0^{t}\|\rho\|^{\frac 4 3}_{L^2}\|\nabla g\|^{\frac 2 3}_{L^2(\mathcal{L}^{2})}ds)^{\frac 3 4}(\int_0^{t}\|\nabla\nabla_R g\|^2_{L^2(\mathcal{L}^{2})}ds)^{\frac 1 2}     \\ \notag
&\leq C+C(\int_0^{t}(1+s)^{-\frac d 8\times\frac 4 3-(\frac d 8+\frac 1 2)\times\frac 2 3}ds)^{\frac 3 4}\leq C.
\end{align}
From \eqref{ineq19}, we have $M^2(t)\leq CM^2(0)+M(t)C+C,$ which implies that $M(t)\leq C$. We thus complete the proof.
\end{proof}
\end{prop}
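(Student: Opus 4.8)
The plan is to localize the system \eqref{eq1} in frequency, run a Littlewood--Paley block energy estimate that is uniform in the dyadic index, and close the argument with a quadratic bootstrap inequality. Set
\[
M(t)=\sup_{s\in[0,t]}\Big(\|\rho(s)\|_{\dot{B}^{-\frac d 2}_{2,\infty}}+\|u(s)\|_{\dot{B}^{-\frac d 2}_{2,\infty}}+\|g(s)\|_{\dot{B}^{-\frac d 2}_{2,\infty}(\mathcal{L}^2)}\Big);
\]
the hypotheses of Theorem \ref{th2} give $M(0)<\infty$, so it suffices to prove $M(t)\le C$ on $[0,\infty)$. First I would apply $\dot{\Delta}_j$ to each equation of \eqref{eq1}, take the $L^2$ inner product of the density equation with $\gamma\dot{\Delta}_j\rho$, of the velocity equation with $\dot{\Delta}_j u$, and the $L^2(\mathcal{L}^2)$ inner product of the $g$-equation with $\dot{\Delta}_j g$, then add. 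As in the proof of Proposition \ref{pro3}, all the linear coupling cancels: the $\gamma\nabla\dot{\Delta}_j\rho$ term against the $div~\dot{\Delta}_j u$ term after integration by parts in $x$, and the $div~\dot{\Delta}_j\tau$ term of the momentum equation against the $\nabla\dot{\Delta}_j u\,R\nabla_R\mathcal{U}$ term of the $g$-equation (both reduce, after integrating by parts in $R$, to $\nabla\dot{\Delta}_j u:\dot{\Delta}_j\tau$ with opposite signs, and $\langle div~\dot{\Delta}_j u,\dot{\Delta}_j g\rangle=0$ because $\int_B\dot{\Delta}_j g\,\psi_\infty\,dR=0$ and $u$ is independent of $R$). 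What survives is the dissipation $\mu\|\nabla\dot{\Delta}_j u\|^2_{L^2}+(\mu+\mu')\|div~\dot{\Delta}_j u\|^2_{L^2}+\|\nabla_R\dot{\Delta}_j g\|^2_{L^2(\mathcal{L}^2)}$ on the left and only the nonlinear forcing $\dot{\Delta}_j F,\dot{\Delta}_j G,\dot{\Delta}_j H$ on the right, with $F=-div(\rho u)$, $G=-u\cdot\nabla u+[i(\rho)-1](div\Sigma(u)+div~\tau)+[\gamma-h(\rho)]\nabla\rho$ and $H=-u\cdot\nabla g-\frac 1 {\psi_\infty}\nabla_R\cdot(\nabla u Rg\psi_\infty)$. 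Bounding the right-hand side by Cauchy--Schwarz and absorbing the $\nabla_R$-part of the $H$-contribution into $\frac12\|\nabla_R\dot{\Delta}_j g\|^2_{L^2(\mathcal{L}^2)}$ produces a differential inequality for $\gamma\|\dot{\Delta}_j\rho\|^2_{L^2}+\|\dot{\Delta}_j u\|^2_{L^2}+\|\dot{\Delta}_j g\|^2_{L^2(\mathcal{L}^2)}$.

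Next, multiplying this block inequality by $2^{-jd}$, taking the supremum over $j\in\mathbb{Z}$, and integrating in time yields
\[
M^2(t)\le CM^2(0)+CM(t)\int_0^t\big(\|F\|_{\dot{B}^{-\frac d 2}_{2,\infty}}+\|G\|_{\dot{B}^{-\frac d 2}_{2,\infty}}\big)\,ds+C\int_0^t\big(\|u\cdot\nabla g\|^2_{\dot{B}^{-\frac d 2}_{2,\infty}(\mathcal{L}^2)}+\|\nabla u Rg\|^2_{\dot{B}^{-\frac d 2}_{2,\infty}(\mathcal{L}^2)}\big)\,ds.
\]
Everything now reduces to bounding the two time integrals uniformly in $t$. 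Here I would use the embedding $L^1\hookrightarrow\dot{B}^{-\frac d 2}_{2,\infty}$ (Bernstein's inequality) to pass to $L^1$ norms, and then invoke the decay rates of Propositions \ref{pro3}--\ref{pro4} together with $\int_0^\infty D(s)\,ds\le C_0E(0)$ from Theorem \ref{th1}. The easy pieces are immediate: $\|F\|_{L^1}\lesssim\|u\|_{L^2}\|\nabla\rho\|_{L^2}+\|div~u\|_{L^2}\|\rho\|_{L^2}\lesssim(1+s)^{-\frac d 4-\frac12}$, time-integrable since $d\ge3$; and $\|u\cdot\nabla g\|^2_{L^1(\mathcal{L}^2)}+\|\nabla u Rg\|^2_{L^1(\mathcal{L}^2)}\lesssim\|u\|^2_{L^2}\|\nabla g\|^2_{L^2(\mathcal{L}^2)}+\|\nabla u\|^2_{L^2}\|g\|^2_{L^2(\mathcal{L}^2)}$, whose integral is finite since $\|u\|_{L^2}$ and $\|g\|_{L^2(\mathcal{L}^2)}$ are bounded while $\int_0^\infty\big(\|\nabla u\|^2_{L^2}+\|\nabla_R\nabla g\|^2_{L^2(\mathcal{L}^2)}\big)\,ds<\infty$ (using Lemma \ref{Lemma1} to dominate $\|\nabla g\|_{L^2(\mathcal{L}^2)}$ by $\|\nabla_R\nabla g\|_{L^2(\mathcal{L}^2)}$). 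The remaining $L^1$ contributions to $G$ of the form $L^2\times L^2$ are handled in the same way.

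The main obstacle is the stress contribution $[i(\rho)-1]\,div~\tau$ inside $G$: only $\|[i(\rho)-1]\,div~\tau\|_{L^1}\lesssim\|\rho\|_{L^2}\|div~\tau\|_{L^2}$ is available, and the crude bound $\|div~\tau\|_{L^2}\lesssim\|\nabla g\|_{L^2(\mathcal{L}^2)}$ does not decay fast enough to be integrated in time. Instead I would use the sharp estimate of Corollary \ref{rema}, $\|div~\tau\|_{L^2}\lesssim\|\nabla g\|^{1/2}_{L^2(\mathcal{L}^2)}\|\nabla\nabla_R g\|^{1/2}_{L^2(\mathcal{L}^2)}$, and then apply Hölder in time so as to peel off the factor $\int_0^t\|\nabla\nabla_R g\|^2_{L^2(\mathcal{L}^2)}\,ds$, which is finite by Theorem \ref{th1}, leaving $\int_0^t\|\rho\|^{4/3}_{L^2}\|\nabla g\|^{2/3}_{L^2(\mathcal{L}^2)}\,ds$; since $\|\rho\|_{L^2}\lesssim(1+s)^{-\frac d 8}$ (Proposition \ref{pro3}) and $\|\nabla g\|_{L^2(\mathcal{L}^2)}\lesssim(1+s)^{-\frac d 8-\frac12}$ (decay of $E^1_\eta$, Proposition \ref{pro4}), the integrand here is $\lesssim(1+s)^{-\frac d 4-\frac13}$, which is integrable precisely because $d\ge3$. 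Assembling all the bounds gives $M^2(t)\le C+CM(t)$, hence $M(t)\le C$, which is \eqref{ineq16}. This uniform bound --- and in particular $\|u\|_{L^\infty(0,\infty;\dot{B}^{-\frac d 2}_{2,\infty})}\le C$ --- is precisely the ingredient needed to run the final Fourier-splitting argument that upgrades the $(1+t)^{-d/8}$ rate of Proposition \ref{pro3} to the optimal $(1+t)^{-d/4}$ of Theorem \ref{th2}.
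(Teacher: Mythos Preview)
Your proposal is correct and follows essentially the same route as the paper: frequency-localize, exploit the linear cancellations to get the block energy inequality, pass to $\dot{B}^{-\frac d2}_{2,\infty}$ via the supremum in $j$, and close the quadratic inequality for $M(t)$ by bounding the nonlinear terms in $L^1$ using the $L^1\hookrightarrow\dot{B}^{-\frac d2}_{2,\infty}$ embedding, the decay rates from Propositions \ref{pro3}--\ref{pro4}, and Corollary \ref{rema} plus H\"older-in-time for the stress contribution $[i(\rho)-1]\,div~\tau$. Your treatment of this last term and the resulting exponent $(1+s)^{-\frac d4-\frac13}$ match the paper's computation exactly.
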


From now on, we are going to finish the proof of theorem \ref{th2}.
\begin{proof}
Using \eqref{ineq13}, we obtain
\begin{align*}
\int_{S(t)}\int_{0}^{t}|\widehat{\rho u}|^2dsd\xi
&\leq C(1+t)^{-\frac d 2} \int_{0}^{t}\|\rho\|^2_{L^{2}}\|u\|^2_{L^{2}}ds \\ \notag
&\leq C(1+t)^{-\frac d 2}.
\end{align*}
Using \eqref{ineq20}, by Propositions \ref{pro0} and \ref{pro5}, we have
\begin{align*}
\int_{S(t)}\int_{0}^{t}|\hat{G}\cdot\bar{\hat{u}}|dsd\xi
&\leq C\int_{0}^{t}\|G\|_{L^{1}}\int_{S(t)}|\hat{u}|d\xi ds \\
&\leq C(1+t)^{-\frac d 4}\int_{0}^{t}\|G\|_{L^{1}}(\int_{S(t)}|\hat{u}|^2d\xi)^{\frac 1 2} ds  \\
&\leq C(1+t)^{-\frac d 2}M(t)\int_{0}^{t}\|G\|_{L^{1}}ds  \\
&\leq C(1+t)^{-\frac d 2}.
\end{align*}
Similar to the proof of Proposition \ref{pro3}, we have
\begin{align}\label{ineq21}
\frac d {dt} E_\eta(t)+\frac {\mu C_d} {1+t}\| u\|^2_{H^s}+\frac {\eta\gamma C_d} {1+t}\|\rho\|^2_{H^{s-1}}
+\|\nabla_R g\|^2_{H^{s}(\mathcal{L}^{2})}\leq \frac {CC_d} {1+t}(1+t)^{-\frac d 2}.
\end{align}
Then the proof of \eqref{ineq11} implies that
\begin{align}\label{ineq22}
E(t)\leq CE_\eta(t)\leq C(1+t)^{-\frac d 2}.
\end{align}
We now consider the faster decay rate for $E^1_\eta (t)$.
Recall that
\begin{align*}
&\frac d {dt} E^1_\eta+\frac { C_d} {1+t}(\mu\|\Lambda^1 u\|^2_{H^{s-1}}+\eta\gamma\|\Lambda^1 \rho\|^2_{H^{s-2}})
+\|\Lambda^1 \nabla_R g\|^2_{H^{s-1}(\mathcal{L}^{2})}  \\
&\leq \frac {CC_d} {1+t}\int_{S(t)}|\xi|^2(|\hat{u}(\xi)|^2+|\hat{\rho}(\xi)|^2) d\xi.
\end{align*}
According to \eqref{ineq22}, we have
\begin{align*}
\frac {CC_d} {1+t}\int_{S(t)}|\xi|^2(|\hat{u}(\xi)|^2+|\hat{\rho}(\xi)|^2) d\xi\leq C{C_d}^2 (1+t)^{-2}(\|\rho\|^2_{L^2}+\|u\|^2_{L^2})\leq C (1+t)^{-\frac d 2-2}.
\end{align*}
Then the proof of \eqref{ineq11} implies that $E^1_\eta \leq C(1+t)^{-\frac d 2-1}$.
Using \eqref{co1}, we get
\begin{align*}
\frac {1} {2}\frac {d} {dt} \|g\|^2_{L^2(\mathcal{L}^{2})}+\|\nabla_R g\|^2_{L^2(\mathcal{L}^{2})}\lesssim -\int_{\mathbb{R}^{d}}\nabla u:\tau dx+\|\nabla u\|_{L^\infty}\|g\|_{L^2(\mathcal{L}^{2})}\|\nabla_R g\|_{L^2(\mathcal{L}^{2})}.
\end{align*}
By Lemmas \ref{Lemma1},\ref{Lemma2} and Theorem \ref{th1}, we deduce that
\begin{align*}
\frac d {dt} \|g\|^2_{L^2(\mathcal{L}^2)}+\|g\|^2_{L^2(\mathcal{L}^2)}\leq C\|\nabla u\|^2_{L^2},
\end{align*}
from which we deduce that
\begin{align*}
\|g\|^2_{L^2(\mathcal{L}^2)}
&\leq \|g_0\|^2_{L^2(\mathcal{L}^2)}e^{-t}+C\int_{0}^{t}e^{-(t-s)}\|\nabla u\|^2_{L^2}ds  \\
&\leq C(e^{-t}+\int_{0}^{t}e^{-(t-s)}(1+s)^{-\frac d 2-1}ds)  \\
&\leq C(1+t)^{-\frac d 2-1},
\end{align*}
where in the last inequality we have used the fact that
\begin{align*}
&\lim_{t\rightarrow\infty}(1+t)^{\frac d 2+1}\int_{0}^{t}e^{-(t-s)}(1+s)^{-\frac d 2-1}ds = \lim_{t\rightarrow\infty}\frac {(1+t)^{\frac d 2+1}\int_{0}^{t}e^{s}(1+s)^{-\frac d 2-1}ds} {e^{t}}  \\
&=1+\lim_{t\rightarrow\infty}\frac {(\frac d 2+1)(1+t)^{\frac d 2}\int_{0}^{t}e^{s}(1+s)^{-\frac d 2-1}ds} {e^{t}}   \\
&=1.
\end{align*}
We thus complete the proof of Theorem \ref{th2}.
\end{proof}

\begin{rema}
One can see that the decay rate for the $\dot{H}^1$-norm obtained in Proposition \ref{pro4} is not optimal. However, in the proof of Theorem \ref{th2}, we improve the decay rate to $(1+t)^{-\frac{d}{4}-\frac{1}{2}}$.
\end{rema}

\begin{rema}
In Theorem \ref{th2}, we only obtain the optimal decay rate with $d\geq 3$. The decay rate for $d\leq 2$ is an interesting problem. However, the technique in this paper fail to obtain the optimal decay rate when $d\leq 2$. We are going to study about this problem in the future.
\end{rema}

\smallskip
\noindent\textbf{Acknowledgments} This work was
partially supported by the National Natural Science Foundation of China (No.11671407 and No.11701586), the Macao Science and Technology Development Fund (No. 098/2013/A3), and Guangdong Province of China Special Support Program (No. 8-2015),
and the key project of the Natural Science Foundation of Guangdong province (No. 2016A030311004).


\phantomsection
\addcontentsline{toc}{section}{\refname}
\bibliographystyle{abbrv} 
\bibliography{Feneref}

\end{document}